\newtheorem {theorem}{Theorem}[section]
\newtheorem {corollary}[theorem]{Corollary}
\newtheorem {proposition}[theorem]{Proposition}
\newtheorem {lemma}[theorem]{Lemma}
\newtheorem {example}[theorem]{Example}
\newtheorem {definition}[theorem]{Definition}
\newtheorem {remark}[theorem]{Remark}
\def\ar{a\kern-.370em\raise.16ex\hbox{\char95\kern-0.53ex\char'47}\kern.05em}
\def\ees{{\accent"5E e}\kern-.385em\raise.2ex\hbox{\char'23}\kern-.08em}
\def\eex{{\accent"5E e}\kern-.470em\raise.3ex\hbox{\char'176}}
\def\AR{A\kern-.46em\raise.80ex\hbox{\char95\kern-0.53ex\char'47}\kern.13em}
\def\EES{{\accent"5E E}\kern-.5em\raise.8ex\hbox{\char'23 }}
\def\EEX{{\accent"5E E}\kern-.60em\raise.9ex\hbox{\char'176}\kern.1em}
\def\ow{o\kern-.42em\raise.82ex\hbox{
  \vrule width .12em height .0ex depth .075ex \kern-0.16em \char'56}\kern-.07em}
\def\OW{O\kern-.460em\raise1.36ex\hbox{
\vrule width .13em height .0ex depth .075ex \kern-0.16em \char'56}\kern-.07em}
\def\UW{U\kern-.42em\raise1.36ex\hbox{
\vrule width .13em height .0ex depth .075ex \kern-0.16em \char'56}\kern-.07em}
\def\B {\mathbb{B}}
\title{Clarke's tangent cones, subgradients, optimality conditions and the Lipschitzness at infinity}
\author{MINH T\`UNG NGUY\EEX N} 
\address[Minh-Tung Nguyen]{Faculty of Mathematical Economics, Banking University of Ho Chi Minh City, Ho Chi Minh City,Vietnam}
\email{tungnm@hub.edu.vn}
\author{TI\EES N-S\OW N PH\d{A}M}
\address[Ti\ees n-S\ow n Ph\d{a}m]{Department of Mathematics, Dalat University, 1 Phu Dong Thien Vuong, Dalat, Vietnam}
\email{sonpt@dlu.edu.vn}
\date{ \today}
\subjclass[2010]{58C20 $\cdot$ 49J52 $\cdot$ 49J53 $\cdot$ 90C30 $\cdot$ 90C46}
\keywords{Tangent and normal cones, subgradients at infinity, optimality conditions, Lipschitzness at infinity}
\thanks{{\bf Funding}: The second author is supported by Vietnam National Foundation for Science and Technology Development (NAFOSTED) grant 101.04-2023.06}
\begin{document}
\maketitle

\begin{abstract}
We first study Clarke's tangent cones at infinity to unbounded subsets of $\mathbb{R}^n.$ We prove that these cones are closed convex and show a characterization of their interiors. We then study subgradients at infinity for extended real value functions on $\mathbb{R}^n$ and derive necessary optimality conditions at infinity for optimization problems. We also give a number of rules for the computing of subgradients at infinity and provide some characterizations of the Lipschitz continuity at infinity for lower semi-continuous functions.
\end{abstract}

\section{Introduction} \label{Section1}

The theory of subgradients (or generalized gradients) is concerned with the differential properties of functions, which do not have derivatives in the usual sense. This theory, in the setting of finite dimensional spaces, associates with a function $f \colon \mathbb{R}^n \to \overline{\mathbb{R}} := \mathbb{R} \cup \{\infty\}$ and a point $x \in \mathbb{R}^n$, a (possibly empty) closed subset $\partial f(x)$ of $\mathbb{R}^n$ whose elements are called {\em subgradients} of $f$ at $x.$ The theory plays an important role in many areas such as optimization, optimal control and differential equations; for more details, we refer the reader to \cite{Clarke1975, Clarke1975-1, Clarke1976, Clarke1979, Clarke1981, Clarke1990, Clarke1998, Hiriart-Urruty1979, Ioffe1981, Ioffe2017, Kim2021, Ledyaev2007, Mordukhovich2018, Mordukhovich2019, Rockafellar1979, Rockafellar1980, Rockafellar1998}.

Rockafellar \cite{Rockafellar1963} defined $\partial f(x)$ for convex functions,
showed how to characterize $\partial f(x)$ in terms of one-sided directional derivatives, and proved that under certain assumptions, rules such as the necessary optimality condition $0 \in \partial f(x)$ are valid.

Clarke \cite{Clarke1973} showed how the definition of $\partial f(x)$ could be extended to arbitrary lower semi-continuous functions $f$ in such a way that $\partial f(x)$ is the set of subgradients in the sense of convex analysis when $f$ is convex, and $\partial f(x)$ reduces to the gradient vector $\nabla f(x)$ when $f$ is smooth (continuously differentiable).
He expressed $\partial f(x)$ by way of a generalized directional derivative when $f$ is Lipschitz continuous in a neighborhood of $x.$ 
He used it to derive necessary optimality conditions for non-smooth, non-convex problems in optimal control and mathematical programming. For locally Lipschitz functions, he proved that $\partial f(x)$ is nonempty compact and provided a number of rules for subgradient calculation that generalize the ones previously known for convex functions.

Rockafellar \cite{Rockafellar1978, Rockafellar1979, Rockafellar1980} supplied an alternative development of Clarke's ideas that includes a direct definition of $\partial f(x)$ in terms of a still more general directional derivative function. He expressed $\partial f(x)$ by way of the normal cone of the epigraph of $f$ at $(x, f(x)).$ He proved that a lower semi-continuous function $f$ is Lipschitz in a neighborhood of a point $x$ if and only if $\partial f(x)$ is nonempty compact. He also extended Clarke's results to functions that are not necessarily locally Lipschitz.

Following the ideas of Clarke and Rockafellar, we define and study {\em subgradients at infinity} of extended real value functions.

\subsection*{Motivations}
Our first motivation comes from optimality conditions in mathematical programming. Namely, let $f \colon \mathbb{R}^n \to \overline{\mathbb{R}}$ be bounded below, i.e.,
\begin{eqnarray*}
f_* &:=& \inf_{x \in \mathbb{R}^n} f(x) \ > \ -\infty.
\end{eqnarray*}
If $f$ has a global minumum at a point $x \in \mathbb{R}^n,$ then certainly $0 \in \partial f(x).$ In general, the function $f$ may not attain its infimum, and so there exists a sequence $x_k$ tending to infinity such that $f(x_k)$ tends to $f_*;$ then it is natural to find necessary optimality conditions {\em at infinity} for such cases. 

Our second motivation is to characterize the Lipschitzness at infinity for lower semi-continuous functions.

\subsection*{Contributions}
With notation and definitions given in the next sections, our main contributions are as follows:
\begin{itemize}
\item Given an unbounded set $C \subset \mathbb{R}^n$ and an index set $I \subset \{1, \ldots, n\},$ we define the Clarke tangent cone $T_C(\infty_I)$ and normal cone $N_C(\infty_I)$ to $C$ at infinity. We show that $T_C(\infty_I)$ is a closed convex cone and give a characterization of its interior
(see Corollary~\ref{Corollary35} and Theorem~\ref{Theorem36}).

\item Given a function $f \colon \mathbb{R}^n \to \overline{\mathbb{R}},$ we define the set $\partial f(\infty)$ of {\em subgradients of $f$ at infinity} in terms of the normal cone to the epigraph of $f$ at infinity. Then we present a number of rules for the computing of subgradients at infinity. In particular, we demonstrate in Theorem~\ref{Theorem414} that under mild assumptions, if the restriction of $f$ on an unbounded closed set $C \subset\mathbb{R}^n$ does not attain its infimum, then the following {\em necessary optimality condition at infinity} holds true: $0 \in \partial f(\infty) + N_C(\infty_I),$ where $I := \{1, \ldots, n\}.$

\item We show in Theorem~\ref{Theorem57} that a lower semi-continuous function $f \colon \mathbb{R}^n \to \overline{\mathbb{R}}$ is {\em Lipschitz at infinity} if and only if $\partial f(\infty)$ is nonempty compact.
\end{itemize}

It should be emphasized, due to the unboundedness of neighborhoods at infinity, that smooth functions are not necessarily Lipschitz at infinity and that the set of subgradients of $f$ at infinity may not be a singleton set when $f$ is smooth and Lipschitz at infinity (see Example~\ref{Example58} below).

We also note that some results given in this paper can be extended to infinite dimensional spaces. However, to lighten the exposition, we do not pursue this extension here.

The rest of the paper is organized as follows. In Section~\ref{Section2}, some notation and definitions are given.
In Section~\ref{Section3} we study Clarke's tangent and normal cones at infinity to unbounded subsets of $\mathbb{R}^n.$
In Section~\ref{Section4} we give the definition of subgradients at infinity for extended real valued functions on $\mathbb{R}^n$ and establish some properties of them. In particular, we provide necessary optimality conditions at infinity for optimization problems.
In Section~\ref{Section5}, some characterizations of the Lipschitz continuity at infinity for lower semi-continuous functions are given.

\section{Preliminaries}\label{Section2}

\subsection{Notation} 
In this paper we deal with the Euclidean space $\mathbb{R}^n$ equipped with the usual scalar product $\langle \cdot, \cdot \rangle$ and the corresponding norm $\| \cdot\|.$ We denote by $\mathbb{B}_r(x)$ the closed ball centered at $x$ with radius $r;$  when ${x}$ is the origin of $\mathbb{R}^n$ we write $\mathbb{B}_{r}$ instead of $\mathbb{B}_{r}({x}),$ and when $r = 1$ we write  $\mathbb{B}$ instead of $\mathbb{B}_{1}.$ For any two different points $x$ and $x'$ in $\mathbb{R}^n,$ the {\em open line segment} joining $x$ and $x'$ is the set
\begin{eqnarray*}
(x, x') &:=& \{(1 - t) x + tx' \mid  0 < t < 1 \}.
\end{eqnarray*}

We will adopt the convention that 
$\inf \emptyset = +\infty$ and $\sup \emptyset = -\infty,$ and that $\lambda_1 + \lambda_2 = +\infty$ if either $\lambda_1$ or $\lambda_2$ is $+\infty$ (even if the other is $-\infty$). It also is expedient to set
\begin{eqnarray*}
(\pm \infty) + \lambda &=& \lambda + (\pm \infty) \ = \  \pm \infty \quad \textrm{ for any real } \lambda, \\
\lambda \cdot (\pm \infty) &=& (\pm \infty)  \cdot \lambda \ = \ \pm \infty \quad \textrm{ for all } \lambda > 0, \\
\lambda \cdot (\pm \infty) &=& (\pm \infty)  \cdot \lambda \ = \ \mp \infty \quad \textrm{ for all } \lambda < 0.
\end{eqnarray*}
The order on $\mathbb{R} \cup \{\pm \infty\}$ is extended in the natural way to
$$-\infty < r < +\infty$$
for all $r \in \mathbb{R}.$

Let $C$ be a nonempty subset of $\mathbb{R}^n.$ The closure, interior, boundary, and convex hull of the set $C \subset \mathbb{R}^n$ will be written as $\mathrm{cl}{C},$ $\mathrm{int} C,$ $\partial C,$ and $\mathrm{co} C,$  respectively. The distance from a point $x \in \mathbb{R}^n$ to $C$ is defined by $d_C(x) :=\inf_{y \in C}\|x - y\|.$ By definition, it is easy to see that the distance function 
$$d_C \colon \mathbb{R}^n \to \mathbb{R}, \quad x \mapsto d_C(x),$$
is nonnegative and $1$-Lipschitz.

\subsection{Clarke's tangent cones and generalized gradients} 
In this subsection we recall some definitions, which can be found in~\cite{Clarke1990, Rockafellar1970, Rockafellar1978, Rockafellar1998}.

Let $C$ be a subset of $\mathbb{R}^n.$ For each $x \in C,$ the {\em tangent cone} $T_C(x)$ in the sense of Clarke consists of all 
vectors $v \in \mathbb{R}^n$ such that, whenever we have sequences $x_k \in C$ with $x_k \to x$ and $t_k \searrow 0,$ there exists a sequence $v_k \to v$ such that $x_k + t_k v_k \in C$ for all $k.$  It is well-known that $T_C(x)$ is a closed convex cone in $\mathbb{R}^n$ containing the origin $0 \in \mathbb{R}^n$ (see \cite{Clarke1990, Rockafellar1978, Rockafellar1980}). Its polar 
\begin{eqnarray*}
N_C(x) &:=& \{w \in \mathbb{R}^n \ | \ \langle v, w \rangle \le 0 \quad \textrm{ for all } \quad v \in T_C(x)\}
\end{eqnarray*}
is called the {\em normal cone} to $C$ at $x.$ Clearly, $N_C(x)$ is also a closed convex cone.

Let $f$ be an extended real valued function on $\mathbb{R}^n.$ As usual, 
the effective domain and {\em epigraph} of $f$ are denoted by, respectively,
\begin{eqnarray*}
\mathrm{dom} f &:=&  \{ x \in \mathbb{R}^n \mid f(x) < \infty \}, \\
\mathrm{epi} f &:=&  \{ (x, y) \in \mathbb{R}^n \times \mathbb{R} \mid f(x) \le y \}.
\end{eqnarray*}
The {\em generalized (Clarke) gradient} of $f$ at $x \in \mathbb{R}^n$ with $f(x)$ finite is defined by 
\begin{eqnarray*}
\partial f(x) &:=& \{\xi \in \mathbb{R}^n \ | \ (\xi, -1) \in N_{\textrm{epi} f}(x, f(x))\}.
\end{eqnarray*}
If $f(x) = \pm \infty,$ we set $\partial f(x) := \emptyset.$  It is well-known that 
(see \cite[Propositions~2.2.4 and 2.2.7]{Clarke1990} and \cite[Theorem~5]{Rockafellar1980})
when $f$ is continuously differentiable, $\partial f(x)$ reduces to the singleton set $\{\nabla f(x)\},$ and when $f$ is convex and is finite at $x,$ then $\partial f(x)$ coincides with what is called the {\em subgradient set} of convex analysis; that is, the set of vectors $\xi$ in $\mathbb{R}^n$ satisfying
\begin{eqnarray*}
f(x') - f(x) &\ge& \langle \xi, x' - x \rangle \quad \textrm{ for all } \quad x' \in \mathbb{R}^n.
\end{eqnarray*}

\section{Clarke's tangent cones at infinity} \label{Section3}

Let $C$ be a subset of $\mathbb{R}^n$ and $I$ be a nonempty subset of $\{1, \ldots, n\}.$ Consider the projection $\pi \colon \mathbb{R}^n \to \mathbb{R}^{\#I}, x := (x_1, \ldots, x_n) \mapsto (x_i)_{i \in I}.$ For each $R > 0$ let
\begin{eqnarray*}
\B_{R, I} &:=& \{x \in \mathbb{R}^n \ | \ \|\pi(x)\| \le R \}.
\end{eqnarray*}
Assume that the set $\pi(C)$ is unbounded. 

\begin{definition}{\rm
(i) By the {\em Clarke tangent cone of $C$ at infinity (with respect to the index set $I$)}, denoted $T_C(\infty_I),$  we mean the set of all vectors $v \in \mathbb{R}^n$ such that, whenever we have sequences $x_k \in C$ with $\pi(x_k) \to \infty$ and $t_k \searrow 0,$ there exists a sequence $v_k \to v$ such that $x_k + t_k v_k \in C$ for all $k.$

(ii) By the {\em normal cone to $C$ at infinity (with respect to the index set $I$)}, we mean the set
\begin{eqnarray*}
N_C(\infty_I) &:=& \{w \in \mathbb{R}^n \ | \ \langle v, w \rangle \le 0 \quad \textrm{ for all } \quad v \in T_C(\infty_I)\}.
\end{eqnarray*}
}\end{definition}

\begin{remark}{\rm 
For an extended real valued function $f$ on $\mathbb{R}^n,$ the definition of $\partial f(\infty),$ which relates to the variation of $f(x)$ as $x$ tends to infinity, will be given in terms of the normal cone to the epigraph of $f$ at infinity. Hence, we will be only interested in points $(x, y) \in \mathrm{epi} f \subset \mathbb{R}^n \times \mathbb{R}$ as $x$ goes to infinity, i.e., points in the epigraph of $f$ whose $i$-th coordinates tend to infinity for some  $i$ in the index set $\{1, \ldots, n\} \subset \{1, \ldots, n, n + 1\}.$ This explains why the index set $I$ appears in the above definition of tangent and normal cones at infinity.
}\end{remark}

\begin{lemma} \label{Lemma33} 
For a vector $v \in \mathbb{R}^n,$ the following statements are equivalent:
\begin{enumerate}[\rm (i)]
\item $v \in T_C(\infty_{I}).$

\item For any sequences $x_k \in C$ with $\pi(x_k) \to \infty$ and $t_k \searrow 0,$ there exists a sequence $v_k \to v$ such that $x_k + t_k v_k \in C$ for infinitely many $k.$

\item For every $\epsilon > 0$ there exist constants $R > 0$ and $\lambda > 0$ such that
\begin{eqnarray} \label{PT21}
C \cap [{x} + t \B_{\epsilon}(v)] &\ne& \emptyset \quad \textrm{ for all } \quad 
{x} \in C \setminus \B_{R, I} \ \textrm{ and } \ t \in [0, \lambda].
\end{eqnarray} 

\item $\lim_{\pi(x) \rightarrow \infty, \, x \in C, \, t \searrow 0}\dfrac{d_C(x+tv)}{t} = 0.$
\end{enumerate}
\end{lemma}

\begin{proof}
(i) $\Rightarrow$ (ii). This is clear from the definition of $T_C(\infty_{I}).$

(ii) $\Rightarrow$ (iii). Suppose the implication were false. Then we could find a real number $\epsilon > 0$ and sequences $x_k \in C$ with $\pi(x_k) \to \infty$ and $t_k \searrow 0,$ such that 
\begin{eqnarray*}
C \cap [{x_k} + t_k \B_{\epsilon}(v)] &=& \emptyset \quad \textrm{ for all } \quad k.
\end{eqnarray*} 
This contradicts our assumption that there is a sequence $v_k \to v$ such that $x_k + t_k v_k \in C$ for infinitely many $k.$

(iii) $\Rightarrow$ (iv). Take an arbitrary $\epsilon > 0.$ There exist constants $R > 0$ and $\lambda > 0$ such that for each ${x} \in C \setminus \B_{R, I}$ and $ t \in [0, \lambda]$ we have $C \cap [{x} + t \B_{\epsilon}(v)] \neq \emptyset.$ For such $x$ and $t,$ there is 
$\overline{x} \in C \cap [{x} + t \B_{\epsilon}(v)],$ and so
\begin{eqnarray*} 
\dfrac{d_C(x+tv)}{t} & \le & \dfrac{\| x + tv - \overline{x}\|}{t} \  \le \ \epsilon. 
\end{eqnarray*}
Since $\epsilon > 0$ is chosen arbitrarily small, it holds that
\begin{eqnarray*} 
\lim_{\pi(x) \rightarrow \infty, \, x \in C, \, t \searrow 0}\dfrac{d_C(x+tv)}{t} &=& 0.
\end{eqnarray*}

(iv) $\Rightarrow$ (i). Take any sequences $x_k \in C$ with $\pi(x_k) \rightarrow \infty$ and $t_k \searrow 0.$ We have 
\begin{eqnarray*}
\epsilon_k &:=& \dfrac{d_C(x_k+t_kv)}{t_k} \ \to \ 0
\end{eqnarray*}
and there is $\overline{x}_k \in C$ such that 
\begin{eqnarray*}
\frac{\|x_k+t_kv-\overline{x}_k \|}{t_k} & < & \epsilon_k +\dfrac{1}{k}. 
\end{eqnarray*}
Setting $v_k:=\dfrac{\overline{x}_k-x_k}{t_k},$ we get $\|v_k -v\| < \epsilon_k +\dfrac{1}{k}$. Hence, $v_k \to v$ and $x_k+t_kv_k = \overline{x}_k \in C$ for all $k.$ By definition, $v \in T_C(\infty).$
\end{proof}

\begin{lemma} \label{Lemma34}
Let $D$ be a nonempty compact subset of $T_C(\infty_I).$ Then for every $\epsilon > 0$ there exist constants $R > 0$ and $\lambda > 0$ such that \eqref{PT21} holds simultaneously for all $v \in \mathrm{co} D.$
\end{lemma}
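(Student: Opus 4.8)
The plan is to first upgrade the preceding lemma to a form that is uniform over the compact set $D$, and then to propagate that uniform estimate to every point of $\mathrm{co}\,D$ using Carathéodory's theorem.

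First I would fix $\epsilon>0$ and, for each $w\in D\subseteq T_C(\infty_I)$, invoke the preceding lemma with $\epsilon/2$ in place of $\epsilon$ to obtain constants $R_w>0$ and $\lambda_w>0$ such that $C\cap[x+t\,\B_{\epsilon/2}(w)]\neq\emptyset$ for all $x\in C\setminus\B_{R_w,I}$ and all $t\in[0,\lambda_w]$. Covering the compact set $D$ by finitely many balls $\mathrm{int}\,\B_{\epsilon/2}(w_1),\dots,\mathrm{int}\,\B_{\epsilon/2}(w_m)$ with $w_j\in D$, and putting $R_0:=\max_j R_{w_j}$ and $\lambda_0:=\min_j \lambda_{w_j}$, the inclusion $\B_{\epsilon/2}(w_j)\subseteq\B_\epsilon(w)$ (valid whenever $\|w-w_j\|<\epsilon/2$) yields the uniform statement: $C\cap[x+t\,\B_\epsilon(w)]\neq\emptyset$ for all $w\in D$, all $x\in C\setminus\B_{R_0,I}$ and all $t\in[0,\lambda_0]$.

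Next I would set $M:=\epsilon+\sup_{w\in D}\|w\|$ (finite since $D$ is compact), $\lambda:=\lambda_0$ and $R:=R_0+\lambda_0 M$, and check that these constants work for every $v\in\mathrm{co}\,D$. Given such a $v$, write $v=\sum_{i=1}^{k}\mu_i w_i$ by Carathéodory's theorem, with $k\le n+1$, $w_i\in D$, $\mu_i\ge 0$ and $\sum_i\mu_i=1$. For a fixed $x\in C\setminus\B_{R,I}$ and $t\in[0,\lambda]$, I would build $p_0:=x,p_1,\dots,p_k\in C$ recursively: from $p_{i-1}\in C$, apply the uniform statement with base point $p_{i-1}$, direction $w_i\in D$ and step size $t\mu_i\in[0,\lambda_0]$ to get $p_i=p_{i-1}+t\mu_i u_i\in C$ with $\|u_i-w_i\|\le\epsilon$, hence $\|u_i\|\le M$. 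Two short estimates conclude the argument. The $\pi$-displacement stays small, $\|\pi(p_{i-1})-\pi(x)\|\le\sum_{j<i}t\mu_j\|u_j\|\le tM\le\lambda_0 M$, so $\|\pi(p_{i-1})\|>R-\lambda_0 M=R_0$ and each $p_{i-1}$ lies outside $\B_{R_0,I}$, making the recursion legitimate; and at the end $p_k=x+t\sum_i\mu_i u_i\in C$ with $\bigl\|\sum_i\mu_i u_i-v\bigr\|\le\sum_i\mu_i\|u_i-w_i\|\le\epsilon$, so $p_k\in C\cap[x+t\,\B_\epsilon(v)]$. This is exactly \eqref{PT21} for $v$, with $R$ and $\lambda$ not depending on $v$.

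The hard part is ensuring that $R$ and $\lambda$ are genuinely independent of $v\in\mathrm{co}\,D$, i.e. that the word ``simultaneously'' in the statement is honest. This rests on three observations: Carathéodory caps the number of iteration steps by the fixed integer $n+1$; the per-step error tolerance is kept equal to $\epsilon$ and the errors recombine convexly (since $\sum_i\mu_i=1$), so $\epsilon$ is not degraded no matter how many steps occur; and the total displacement of the $\pi$-coordinates over all steps is bounded by $\lambda_0 M$ regardless of $v$, so the single enlargement $R:=R_0+\lambda_0 M$ is enough to keep every intermediate point $p_i$ inside the region $C\setminus\B_{R_0,I}$ where the one-step property from the first part is available. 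Everything else is routine.
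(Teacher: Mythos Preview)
Your proof is correct and follows essentially the same strategy as the paper's: use compactness of $D$ to obtain a version of \eqref{PT21} that is uniform over finitely many (or, in your case, all) directions in $D$, then walk through a convex combination step by step, controlling the $\pi$-displacement so that every intermediate point stays outside $\B_{R_0,I}$ and the one-step property remains available. The paper organizes the walk as an induction on the number of generators $v_1,\dots,v_m$, splitting $v=\alpha v'+(1-\alpha)v_m$ at each stage, whereas you iterate linearly through the summands $w_1,\dots,w_k$; the displacement and error estimates are the same in both.

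One small remark: your invocation of Carath\'eodory is harmless but unnecessary. Both of your key bounds---the cumulative displacement $\sum_{j<i}t\mu_j\|u_j\|\le tM$ and the final error $\bigl\|\sum_i\mu_i u_i-v\bigr\|\le\sum_i\mu_i\epsilon=\epsilon$---are already independent of the number $k$ of terms in the convex combination, because $\sum_i\mu_i=1$. The paper's proof exploits exactly this, inducting over an arbitrary number $m$ of generators without ever bounding $m$ by $n+1$.
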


\begin{proof}
Take arbitrary $\epsilon > 0.$ Since $D$ is compact, it can be covered by a finite family of balls $\B_\epsilon(v_i),$ where $v_i \in D$ for $i = 1, \ldots, m.$ Then
\begin{eqnarray*}
\mathrm{co} D &\subset& \mathrm{co}\left( \bigcup_{i = 1}^m \B_\epsilon(v_i) \right) \ = \ \mathrm{co}\{v_1, \ldots, v_m\} + \B_\epsilon.
\end{eqnarray*} 
Hence, it will suffice to show \eqref{PT21} holds for all $v$ in $\mathrm{co}\{v_1, \ldots, v_m\},$ because this will imply that for all $v \in \mathrm{co}D$ we have
\begin{eqnarray*}
C \cap [{x} + t \B_{2 \epsilon}(v)] &\ne& \emptyset \quad \textrm{ for all } \quad 
{x} \in C \setminus \B_{R, I} \ \textrm{ and } \ t \in [0, \lambda],
\end{eqnarray*} 
which is equivalent to the desired conclusion since $\epsilon > 0$ is arbitrary anyway.

For $i = 1, \ldots, m,$ we have that \eqref{PT21} holds for $v_i$ and certain positive constants $R_i$ and $\lambda_i.$ Letting 
$$\overline{R} := \max_{i = 1, \ldots, m}  R_i \quad \textrm{ and } \quad \overline{\lambda} := \min_{i = 1, \ldots, m} \lambda_i,$$
we have
\begin{eqnarray} \label{PT23}
C \cap [{x} + t \B_{\epsilon}(v_i)] &\ne& \emptyset \quad \textrm{ for all } {x} \in C \setminus \B_{\overline{R}, I}, \ t \in [0, \overline{\lambda}], \ \textrm{ and } \ i = 1, \ldots, m.
\end{eqnarray} 
Let $R \ge \overline{R}$ and $\lambda \in (0, \overline{\lambda}]$ be such that
\begin{eqnarray*}
R - \lambda(\rho + \epsilon) &\ge& \overline{R},
\end{eqnarray*}
where $\rho := \max_{i = 1, \ldots, m} \|v_i\|.$ The assertion
\begin{eqnarray} \label{PT25}
C \cap [{x} + t \B_{\epsilon}(v)] &\ne& \emptyset \quad \textrm{ for all } {x} \in C \setminus \B_{R, I}, \ t \in [0, \lambda], \textrm{ and }
\ v \in \mathrm{co}\{v_1, \ldots, v_k\}
\end{eqnarray} 
holds trivially for $k = 1,$ in view of \eqref{PT23}. Make the induction hypothesis that it holds for $k = m - 1.$ 

Take arbitrary ${x} \in C \setminus \B_{R, I},$ $t \in [0, \lambda],$ and $v \in \mathrm{co}\{v_1, \ldots, v_m\}.$ We can write
$$v = \alpha v' + (1- \alpha) v_m,$$
where $\alpha \in [0, 1]$ and $v' \in \mathrm{co}\{v_1, \ldots, v_{m - 1}\}.$ Since $\alpha t \in [0, \lambda],$ we have by induction that $C$ meets ${x} + \alpha t \B_\epsilon(v').$ Let ${x}'$ be any point in the intersection. Then ${x}' \in {x} + \alpha t \B_\epsilon(v'),$ and so,
\begin{eqnarray*}
\|\pi({x}')\| 
&\ge& \|\pi({x})\| - \alpha t (\|v'\| + \epsilon) \\
&>& R - \lambda (\rho + \epsilon) \ \ge \ \overline{R}.
\end{eqnarray*} 
Thus, ${x}' \in C \setminus \B_{\overline{R}, I}.$ Since $(1 - \alpha) t \le \lambda \le \overline{\lambda},$ it follows from \eqref{PT23} that $C$ meets ${x}' + (1 - \alpha)t \B_\epsilon(v_m).$ Hence $C$ meets
$${x} + \alpha t \B_\epsilon(v') + (1 - \alpha)t \B_\epsilon(v_m) = {x} + t \B_\epsilon(v).$$
This proves \eqref{PT25} for $k = m$ and completes the proof.
\end{proof}

\begin{corollary}\label{Corollary35}
The set $T_C(\infty_I)$ is a closed convex cone in $\mathbb{R}^n$ containing the origin.
\end{corollary}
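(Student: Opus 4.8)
The plan is to check, one at a time, the four properties packaged into the statement: that $T_C(\infty_I)$ contains $0$, is a cone, is closed, and is convex. The containment $0 \in T_C(\infty_I)$ is immediate from the definition, since for any admissible sequences $x_k \in C$ with $\pi(x_k) \to \infty$ and $t_k \searrow 0$ the constant choice $v_k \equiv 0$ gives $x_k + t_k v_k = x_k \in C$.

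For the cone property, I would take $v \in T_C(\infty_I)$ and $\mu > 0$ and note that if $x_k \in C$ with $\pi(x_k) \to \infty$ and $t_k \searrow 0$, then also $\mu t_k \searrow 0$; hence there is $w_k \to v$ with $x_k + (\mu t_k) w_k \in C$, and setting $v_k := \mu w_k \to \mu v$ shows $x_k + t_k v_k \in C$. Thus $\mu v \in T_C(\infty_I)$.

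Closedness is most conveniently obtained from the characterization \eqref{PT21}. Suppose $v^{(j)} \in T_C(\infty_I)$ and $v^{(j)} \to v$; fix $\epsilon > 0$ and pick $j$ with $\|v^{(j)} - v\| < \epsilon/2$, so that $\B_{\epsilon/2}(v^{(j)}) \subset \B_{\epsilon}(v)$ and therefore $t\,\B_{\epsilon/2}(v^{(j)}) \subset t\,\B_{\epsilon}(v)$ for every $t \ge 0$. Applying \eqref{PT21} to $v^{(j)}$ with tolerance $\epsilon/2$ produces $R, \lambda > 0$ for which $C \cap [x + t\,\B_{\epsilon/2}(v^{(j)})] \ne \emptyset$, hence a fortiori $C \cap [x + t\,\B_{\epsilon}(v)] \ne \emptyset$, for all $x \in C \setminus \B_{R, I}$ and $t \in [0, \lambda]$. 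Since $\epsilon > 0$ was arbitrary, $v$ satisfies \eqref{PT21} for every $\epsilon$, which by the characterization of the first lemma of Section~\ref{Section3} means $v \in T_C(\infty_I)$.

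Convexity is the one point that is not purely formal, but the work has already been done in Lemma~\ref{Lemma34}. Given $v_1, v_2 \in T_C(\infty_I)$ and $\alpha \in [0, 1]$, I would apply Lemma~\ref{Lemma34} to the compact set $D := \{v_1, v_2\} \subset T_C(\infty_I)$: it yields, for each $\epsilon > 0$, constants $R, \lambda > 0$ such that \eqref{PT21} holds simultaneously for every $v \in \mathrm{co}\, D$, in particular for $v = \alpha v_1 + (1 - \alpha) v_2$; since this holds for every $\epsilon > 0$, the same characterization gives $\alpha v_1 + (1 - \alpha) v_2 \in T_C(\infty_I)$, so the set is convex. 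Combining the four verifications proves the corollary; the only step that is not bookkeeping is convexity, and there the real obstacle — the simultaneous validity of \eqref{PT21} along convex combinations — has been absorbed into the inductive argument of Lemma~\ref{Lemma34}.
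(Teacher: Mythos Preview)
Your proof is correct and follows essentially the same route as the paper: the paper simply asserts that closedness, the cone property, and $0\in T_C(\infty_I)$ are easy from the definition and then invokes Lemma~\ref{Lemma34} for convexity, while you spell out those verifications explicitly (using the $\epsilon$--characterization \eqref{PT21} for closedness) and likewise appeal to Lemma~\ref{Lemma34} for convexity. There is no substantive difference in strategy, only in the level of detail.
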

\begin{proof}
By definition, it is easy to check that $T_C(\infty_I)$ is a closed cone containing the origin. Moreover, it is convex in view of Lemma~\ref{Lemma34}.
\end{proof}

We are ready to prove the first of our main theorems, which gives a characterization of interior points of Clarke's tangent cones at infinity.

\begin{theorem}\label{Theorem36}
Assume that the set $C$ is closed. Then $v \in \mathrm{int} T_C(\infty_I)$ if and only if there exist constants $\epsilon > 0, R > 0,$ and $\lambda > 0$ such that 
\begin{eqnarray} \label{PT31}
{x} + tv' \in C \quad \textrm{ for all } {x} \in C \setminus \B_{R, I}, \ t \in [0, \lambda], \ \textrm{ and } \ v' \in \B_\epsilon(v).
\end{eqnarray} 
\end{theorem}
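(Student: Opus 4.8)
The plan is to prove the two implications separately, with the "only if" direction being essentially a compactness argument and the "if" direction a direct verification from the definition.

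For the easier direction, suppose \eqref{PT31} holds for some $\epsilon > 0$, $R > 0$, and $\lambda > 0$. I want to show $v \in \mathrm{int}\, T_C(\infty_I)$, and the natural candidate is that $\B_{\epsilon}(v) \subset T_C(\infty_I)$ (or at least a slightly smaller ball). So take any $v' \in \B_{\epsilon/2}(v)$, say, and any sequences $x_k \in C$ with $\pi(x_k) \to \infty$ and $t_k \searrow 0$. For $k$ large we have $x_k \in C \setminus \B_{R,I}$ and $t_k \in [0,\lambda]$, so \eqref{PT31} applied with the constant vector $v'$ gives $x_k + t_k v' \in C$; taking $v_k := v'$ eventually (and arbitrary for small $k$) exhibits the required sequence. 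Hence $v' \in T_C(\infty_I)$, so $v$ lies in the interior. Actually it is cleanest to just note $\B_{\epsilon}(v) \subseteq T_C(\infty_I)$ directly by this argument applied to each $v' \in \B_{\epsilon}(v)$ with the constant sequence $v_k \equiv v'$.

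For the harder direction, assume $v \in \mathrm{int}\, T_C(\infty_I)$, so there is $\delta > 0$ with $\B_{2\delta}(v) \subset T_C(\infty_I)$. The set $D := \B_{2\delta}(v)$ is a nonempty compact subset of $T_C(\infty_I)$, so by Lemma~\ref{Lemma34}, applied with $\epsilon := \delta$, there exist $R > 0$ and $\lambda > 0$ such that for every $x \in C \setminus \B_{R,I}$, every $t \in [0,\lambda]$, and every $w \in \mathrm{co}\, D = D$ we have $C \cap [x + t\B_{\delta}(w)] \ne \emptyset$. The task is then to upgrade this "$C$ meets the ball" conclusion to the pointwise statement "$x + tv' \in C$" in \eqref{PT31}, and this is where closedness of $C$ enters. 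The idea is an iteration producing a Cauchy sequence: fix $v' \in \B_{\delta}(v)$, $x \in C \setminus \B_{R,I}$, and $t \in [0,\lambda]$; since $v' \in D$, there is a point $x_1 \in C$ with $\|x_1 - (x + tv')\| \le t\delta$. But $x_1$ is roughly $x + tv'$, hence still far out in the $\pi$-direction (shrinking $R$ slightly in advance, as in the proof of Lemma~\ref{Lemma34}, keeps $x_1 \in C \setminus \B_{R', I}$ for a suitable $R'$), and one wants to correct the error $x + tv' - x_1$, which has norm $\le t\delta$, by a second application. Writing $x + tv' - x_1 = t\delta u$ with $\|u\| \le 1$ and noting $v + \delta u \in \B_{2\delta}(v) = D$ is NOT quite the shape needed; instead one applies the ball-meeting property at the base point $x_1$ with parameter $t$ and the vector $\tfrac{1}{t}(x + tv' - x_1) + (\text{something in } \B_{\delta})$ — more precisely, one chooses the correction direction to lie in $\B_{\delta}(w)$ for an appropriate $w \in D$, producing $x_2 \in C$ with $\|x_2 - (x+tv')\| \le t\delta^2$ (say), and iterates to get $x_j \in C$ with $\|x_j - (x+tv')\| \le t\delta^j \to 0$. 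Since $C$ is closed, $x + tv' \in C$, which is \eqref{PT31}.

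The main obstacle is precisely the bookkeeping in this iteration: one must (a) choose the geometric decay ratio (here $\delta < 1$, which can be arranged since we are free to shrink the ball around $v$) so the errors sum to something controlled, (b) verify at each stage that the current point $x_j$ still lies outside $\B_{R,I}$ — which forces one, exactly as in Lemma~\ref{Lemma34}, to start from $x$ outside a larger ball $\B_{R,I}$ with $R$ chosen so that the total drift $\sum t\delta^j(\|v\| + \delta) \le \lambda(\|v\|+\delta)/(1-\delta)$ does not bring any $x_j$ back inside the ball where the ball-meeting property fails, and (c) arrange the correction vectors to genuinely lie in balls $\B_{\delta}(w)$ with $w \in D = \B_{2\delta}(v)$, which is the reason $D$ was taken to be the ball of radius $2\delta$ rather than $\delta$. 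Once the parameters $\epsilon$ (the radius around $v$ in \eqref{PT31}), $R$, and $\lambda$ are fixed correctly at the outset, each individual step is routine.
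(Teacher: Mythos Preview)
Your sufficiency direction is fine and is exactly what the paper does. The gap is in the necessity direction, at the heart of your iteration.

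After your first step you have $x_1\in C$ with $e_1:=(x+tv')-x_1$ satisfying $\|e_1\|\le t\delta$, and you want to choose $w\in D=\B_{2\delta}(v)$ and a parameter $s$ so that applying the ball-meeting property at $x_1$ produces $x_2\in C\cap\big(x_1+s\B_\delta(w)\big)$ that is strictly closer to the target. But the set of reachable centers $\{sw:s\in[0,\lambda],\,w\in\B_{2\delta}(v)\}$ is a thin truncated cone about the direction $v$, while $e_1$ is an \emph{arbitrary} vector in $\B_{t\delta}(0)$. If $e_1$ is, say, orthogonal to $v$, no choice of $s,w$ brings $sw$ within $c\|e_1\|$ of $e_1$ for any $c<1$; with your specific suggestion (parameter $t$, direction near $\tfrac1t e_1$), the required direction has norm at most $\delta$ and hence lies in $\B_\delta(w)$ for some $w\in\B_{2\delta}(v)$ only when $\|v\|\le 3\delta$, which you cannot assume. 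Worse, with parameter $t$ and any $w\in D$ the new point satisfies $\|x_2-(x+tv')\|\ge t(\|v\|-4\delta)$, so for $\|v\|>5\delta$ the error \emph{increases}. Taking $D$ of radius $2\delta$ rather than $\delta$ does not fix this; nor does enlarging the radius further, as long as it is small compared with $\|v\|$. So the geometric-decay single-sequence scheme you sketched cannot work as written.

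The paper avoids this entirely by a contradiction argument rather than an iteration. It takes $\B_{3\epsilon}(v)\subset T_C(\infty_I)$ and applies Lemma~\ref{Lemma34} as you do, then supposes some $\bar x+\bar\lambda\bar v\notin C$ with $\bar v\in\B_\epsilon(v)$. Choosing $\rho>0$ with $\B_\rho(\bar x+\bar\lambda\bar v)\cap C=\emptyset$, it sets $\tilde\lambda:=\max\{s\in[0,\bar\lambda]:C\cap\B_\rho(\bar x+s\bar v)\ne\emptyset\}$ and picks $\tilde x=\bar x+\tilde\lambda\bar v+\rho e$ (necessarily on the boundary, $\|e\|=1$) in that intersection. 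The key move is to apply the ball-meeting property at $\tilde x$ in the direction $\tilde v:=\bar v-2\epsilon e\in\B_{3\epsilon}(v)$: for small $t>0$ one computes
\[
\tilde x+t\B_\epsilon(\tilde v)\ \subset\ \bar x+(\tilde\lambda+t)\bar v+\B_\rho,
\]
so $C$ meets $\B_\rho(\bar x+(\tilde\lambda+t)\bar v)$ with $\tilde\lambda+t>\tilde\lambda$, contradicting the maximality of $\tilde\lambda$. The extra $\epsilon$ in the radius $3\epsilon$ is exactly what allows the inward component $-2\epsilon e$ while keeping $\tilde v$ inside the compact set to which Lemma~\ref{Lemma34} was applied. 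This ``last time the sliding ball meets $C$, then push forward and inward'' device is the missing idea; your iteration never produces a direction with such an inward component because it only uses vectors in $\B_{2\delta}(v)$ chosen to aim at the target, not to exploit the geometry of the boundary.
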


\begin{proof}
{\em Sufficiency.} Assume that the assertion \eqref{PT31} holds, and take arbitrary $v' \in \B_\epsilon(v).$ For any sequences $x_k \in C$ with 
$\pi(x_k) \to \infty$ and $t_k \searrow 0,$ we have $x_k \not \in \B_{R, I}$ and $t_k \in (0, \lambda)$ for all $k$ sufficiently large, and so $x_k + t_k v_k \in C$ for $v_k := v'.$ By definition, then $v' \in T_C(\infty_I).$ Therefore, $\B_\epsilon(v) \subset T_C(\infty_I).$ Consequently, $v \in \mathrm{int} T_C(\infty_I).$ 

{\em Necessity.} Given $v \in \mathrm{int} T_C(\infty_I),$ choose $\epsilon > 0$ small enough that $\B_{3 \epsilon} (v) \subset T_C(\infty_I).$ Apply Lemma~\ref{Lemma34} to the compact set $D := \B_{3 \epsilon} (v)$ to obtain $R' > 0$ and $\lambda' > 0$ such that 
\begin{eqnarray} \label{PT32}
C \cap [{x} + t\B_\epsilon(v')] \ne \emptyset \quad \textrm{ whenever } \ {x} \in C \setminus \B_{R, I}, \ t \in [0, \lambda'], \textrm{ and } \ v' \in \B_{3 \epsilon} (v).
\end{eqnarray} 
Next choose constants $R$ and $\lambda$ so that
\begin{eqnarray} \label{PT33}
R - 2 \lambda(\|v\| + \epsilon) \ge R' \quad \textrm{ and } \quad \lambda \in (0,  \lambda'].
\end{eqnarray}
We will show that \eqref{PT31} holds for this choice of $\epsilon, R,$ and $\lambda.$

By contradiction, suppose that \eqref{PT31} does not hold. Then there exist
\begin{eqnarray} \label{PT34}
\overline{x} \in C \setminus \B_{R, I}, \quad \overline{\lambda} \in [0, \lambda], \quad \textrm{ and } \quad \overline{v} \in \B_\epsilon(v)
\end{eqnarray}
such that $\overline{x} + \overline{\lambda} \overline{v} \not \in C.$ Since $C$ is closed, we can choose $\rho > 0$ small enough that
\begin{eqnarray} \label{PT35}
C \cap \B_\rho(\overline{x} + \overline{\lambda} \overline{v}) &=& \emptyset.
\end{eqnarray}
In particular, $\rho < \|\overline{x} - (\overline{x} + \overline{\lambda} \overline{v})\| = \overline{\lambda} \|\overline{v} \|.$

Let 
\begin{eqnarray} \label{PT36}
\widetilde{\lambda} 
&:=& \max\{s \in [0, \overline{\lambda}] \ | \ C \cap \B_\rho(\overline{x} + s \overline{v}) \ne \emptyset \} \\
&=& \max\{s \in [0, \overline{\lambda}] \ | \ C \cap [\overline{x} + s \overline{v} + \B_\rho] \ne \emptyset \}; \nonumber
\end{eqnarray}
this maximum is attached because $C$ is closed and $\B_\rho$ is compact. Since $\overline{x} \in C$ and \eqref{PT35} holds, we have
\begin{eqnarray} \label{PT37}
0 < \widetilde{\lambda} < \overline{\lambda} \le \lambda \le \lambda'.
\end{eqnarray}
Select any $\widetilde{x} \in C \cap  \B_\rho(\overline{x} + \widetilde{\lambda} \overline{v}),$ as exists by \eqref{PT36}. The interior of the ball $\B_\rho(\overline{x} + \widetilde{\lambda} \overline{v})$ cannot meet $C,$ in view of \eqref{PT36}. Thus,
\begin{eqnarray*}
\widetilde{x} &=& \overline{x} + \widetilde{\lambda}  \overline{v} + \rho e \quad \textrm{ with} \quad \|e\| = 1.
\end{eqnarray*}
We deduce from \eqref{PT34} and \eqref{PT37} that
\begin{eqnarray*}
\|\pi(\widetilde{x})\| 
&\ge& \|\pi(\overline{x})\| - \|\pi(\widetilde{x}  - \overline{x})\| \\
&>& R - \|\pi(\widetilde{\lambda} \overline{v} + \rho e)\| \\
&\ge& R - \|\widetilde{\lambda} \overline{v} + \rho e \| \\
&\ge& R - \widetilde{\lambda} \| \overline{v}\| - \rho \\
&>& R - \widetilde{\lambda} \| \overline{v}\| - \overline{\lambda} \| \overline{v}\| \\
&\ge& R - 2{\lambda} \| \overline{v}\| \\
&\ge& R - 2{\lambda} (\| {v}\| + \epsilon).
\end{eqnarray*}
It follows from \eqref{PT33} that for this $\widetilde{x}$ and for $\widetilde{v} := \overline{v} - 2 \epsilon e$ we have 
$$\widetilde{x} \in C \setminus \B_{R', I} \quad \textrm{  and } \quad \widetilde{v} \in \B_{3 \epsilon}(v).$$ 
Therefore, by \eqref{PT32},
\begin{eqnarray}\label{PT39}
C \cap [\widetilde{x} + t \B_\epsilon(\widetilde{v}) ] &\ne& \emptyset \quad \textrm{ for all } \quad t \in [0, \lambda'].
\end{eqnarray} 
However, take any $t > 0$ small enough that
\begin{eqnarray}\label{PT310}
0 < t < \min \{\frac{\rho}{2 \epsilon}, \overline{\lambda} - \widetilde{\lambda}\}.
\end{eqnarray} 
(Hence $t < \overline{\lambda} \le \lambda'.)$ It will be shown that
\begin{eqnarray}\label{PT311}
C \cap [\widetilde{x} + t \B_\epsilon(\widetilde{v})] &=& \emptyset.
\end{eqnarray} 
The contradiction between this and \eqref{PT39} will finish the proof.

Since $t < \frac{\rho}{2 \epsilon}$ in \eqref{PT310}, we have
$$0 < \rho - 2 \epsilon t < \rho - \epsilon t < \rho,$$
so that
\begin{eqnarray*}
\widetilde{x} + t\B_{\epsilon}(\widetilde{v}) 
&=& (\overline{x} + \widetilde{\lambda} \overline{v} + \rho e) + t(\overline{v} - 2 \epsilon e) + t \epsilon \B \\
&=& \overline{x} + (\widetilde{\lambda} + t) \overline{v} + (\rho - 2 t \epsilon) e + t \epsilon \B \\
&\subset& \overline{x} + (\widetilde{\lambda} + t) \overline{v} + \B_\rho.
\end{eqnarray*} 
This yields \eqref{PT311}, because 
\begin{eqnarray*}
C \cap [\overline{x} + (\widetilde{\lambda} + t) \overline{v} + \B_\rho] = \emptyset 
\end{eqnarray*} 
by the definition of $\widetilde{\lambda}$ and for $t > 0$ small enough.
\end{proof}

\begin{corollary} \label{Corollary37}
Assume that the set $C$ is closed. If the interior of $T_C(\infty_I)$ is nonempty, then the multifunction $N_C \colon C \rightrightarrows \mathbb{R}^n$ is closed at infinity, in the sense that
\begin{eqnarray*}
x_k \in C, \ \pi(x_k) \to \infty, \ z_k \in N_C(x_k), \ z_k \to z \quad \Longrightarrow \quad z \in N_C(\infty_I).
\end{eqnarray*}
\end{corollary}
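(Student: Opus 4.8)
The plan is to reduce the assertion to testing an inequality only against the \emph{interior} of $T_C(\infty_I)$, and then to propagate it to the whole cone by convexity. Concretely, suppose $x_k \in C$, $\pi(x_k) \to \infty$, $z_k \in N_C(x_k)$ and $z_k \to z$. Since $\mathrm{int}\, T_C(\infty_I) \ne \emptyset$ by hypothesis, it will suffice to prove that $\langle v, z \rangle \le 0$ for every $v \in \mathrm{int}\, T_C(\infty_I)$: indeed, $T_C(\infty_I)$ is a closed convex cone by Corollary~\ref{Corollary35}, hence equals the closure of its interior (a closed convex set with nonempty interior is the closure of that interior); writing an arbitrary $v \in T_C(\infty_I)$ as a limit of points $v^{(m)} \in \mathrm{int}\, T_C(\infty_I)$ and letting $\langle v^{(m)}, z\rangle \le 0$ pass to the limit yields $\langle v, z \rangle \le 0$ for all $v \in T_C(\infty_I)$, i.e.\ $z \in N_C(\infty_I)$.

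So I would fix $v \in \mathrm{int}\, T_C(\infty_I)$ and apply Theorem~\ref{Theorem36} to produce constants $\epsilon > 0$, $R > 0$, $\lambda > 0$ with ${x} + t v' \in C$ whenever ${x} \in C \setminus \B_{R, I}$, $t \in [0, \lambda]$ and $v' \in \B_\epsilon(v)$. Since $\pi(x_k) \to \infty$, there is $k_0$ such that $x_k \in C \setminus \B_{R, I}$ for all $k \ge k_0$. The key step is then the claim that $v \in T_C(x_k)$ for all such $k$. To verify the sequential definition of the Clarke tangent cone at $x_k$, take any $y_j \in C$ with $y_j \to x_k$ and $t_j \searrow 0$; because $\pi$ is continuous and $\|\pi(x_k)\| > R$, there is $J$ with $y_j \in C \setminus \B_{R, I}$ and $t_j \le \lambda$ for $j \ge J$, so $y_j + t_j v \in C$ for $j \ge J$. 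Setting $v_j := v$ for $j \ge J$ and $v_j := 0$ for $j < J$ — the latter being legitimate precisely because $y_j + t_j v_j = y_j \in C$ — gives $v_j \to v$ with $y_j + t_j v_j \in C$ for every $j$, hence $v \in T_C(x_k)$.

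From $z_k \in N_C(x_k)$ and the definition of the normal cone we then get $\langle v, z_k \rangle \le 0$ for all $k \ge k_0$; letting $k \to \infty$ gives $\langle v, z \rangle \le 0$, and the density argument of the first paragraph finishes the proof. (Alternatively one could invoke Theorem~\ref{Theorem36} with the full ball $\B_\epsilon(v)$ to obtain $\B_\epsilon(v) \subset T_C(x_k)$ and conclude $\langle v, z_k \rangle \le -\epsilon \|z_k\| \le 0$, which is slightly stronger but unnecessary here.)

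The step that needs the most care is the verification that $v \in T_C(x_k)$: the sequential definition demands $y_j + t_j v_j \in C$ for \emph{all} $j$, not merely for large $j$, which is exactly why one uses the fact that the $y_j$ themselves lie in $C$ to handle the finitely many initial indices where the estimates from Theorem~\ref{Theorem36} are not yet available. Beyond this, the proof only draws on Theorem~\ref{Theorem36}, Corollary~\ref{Corollary35}, and the elementary convex-analysis fact quoted above.
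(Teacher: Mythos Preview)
Your proof is correct and follows essentially the same route as the paper's: fix $v \in \mathrm{int}\,T_C(\infty_I)$, use Theorem~\ref{Theorem36} to get $v \in T_C(x_k)$ for large $k$, deduce $\langle v, z_k\rangle \le 0$ and pass to the limit, then extend to all of $T_C(\infty_I)$ by the closure-of-interior argument for convex sets. The only difference is that you spell out in detail the verification that $v \in T_C(x_k)$ via the sequential definition (including the care with the finitely many initial indices), whereas the paper simply asserts this as an immediate consequence of Theorem~\ref{Theorem36}.
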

\begin{proof}
Take arbitrary $v \in \mathrm{int} T_C(\infty_I).$ In view of Theorem~\ref{Theorem36}, there exists a constant $R > 0$ such that $v \in T_C(x)$ for all $x \in C \setminus \B_{R, I},$ and hence $v \in T_C(x_k)$ for all $k$ sufficiently large. By the definition of $N_C(x_k),$ then $\langle v, z_k \rangle \le 0.$ Consequently,
$\langle v, z \rangle = \lim_{k \to \infty} \langle v, z_k \rangle \le 0.$ Therefore,
\begin{eqnarray*}
\langle v, z \rangle &\le& 0 \quad \textrm{ for all } \quad v \in \mathrm{int} T_C(\infty_I).
\end{eqnarray*}
Since $T_C(\infty_I)$ is convex with nonempty interior, it is the closure of its interior, and so the inequality
$\langle v, z \rangle \le 0$ holds for all $v \in T_C(\infty_I).$ Thus, $z \in N_C(\infty_I).$
\end{proof}

\begin{remark}\label{Remark38}{\rm
The interior of $T_C(\infty_I)$ is nonempty if and only if  $N_C(\infty_I)$ is a pointed cone in the sense that $v \in  N_C(\infty_I) \setminus \{0\}$ implies $-v \not \in N_C(\infty_I)$ (see \cite[Exercise~6.22]{Rockafellar1998}).
}\end{remark}

An analytical characterization of Clarke's tangent cones is as follows.
 
\begin{proposition}
A vector $v \in \mathbb{R}^n$ belongs to $T_C(\infty_I)$ if and only if the following equality holds:
\begin{eqnarray*}
\limsup_{\pi(x) \to \infty, \ d_C(x) \to 0, \ t \searrow 0} \frac{d_C(x + t v) - d_C(x)}{t} &=& 0.
\end{eqnarray*}
\end{proposition}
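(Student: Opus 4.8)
The plan is to convert the statement into the metric characterization of $T_C(\infty_I)$ recorded in the lemma giving~\eqref{PT21}. The first observation is that $C \cap [{x} + t \B_{\epsilon}(v)] \ne \emptyset$ is the same as $d_C(x+tv) \le \epsilon t$, since $x + t\B_\epsilon(v) = \B_{\epsilon t}(x+tv)$; so (up to replacing $\epsilon$ by $\epsilon/2$ when passing between the two forms) that lemma reads: $v \in T_C(\infty_I)$ if and only if for every $\epsilon>0$ there are $R,\lambda>0$ with $d_C(x+tv) \le \epsilon t$ for all $x \in C$ with $\|\pi(x)\|>R$ and all $t \in [0,\lambda]$. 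Since $d_C \equiv 0$ on $C$, the left-hand side here is exactly the difference quotient $\bigl(d_C(x+tv)-d_C(x)\bigr)/t$ evaluated along points of $C$.

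Next I would note that the $\limsup$ in the statement automatically lies in $[0,\|v\|]$. The upper bound is immediate from the $1$-Lipschitz property of $d_C$. For the lower bound, since $\pi(C)$ is unbounded one may approach the limit along points $x \in C$ with $\|\pi(x)\| \to \infty$, where the quotient equals $d_C(x+tv)/t \ge 0$; hence the $\limsup$ is $\ge 0$. Consequently the claimed equality ``$\limsup = 0$'' is equivalent to the one-sided bound ``$\limsup \le 0$'', and it is this one-sided bound that I will match with membership in $T_C(\infty_I)$.

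For the implication ``$\limsup \le 0 \Rightarrow v \in T_C(\infty_I)$'': given $\epsilon>0$, the hypothesis furnishes $R,\delta,\tau>0$ with $\bigl(d_C(x+tv)-d_C(x)\bigr)/t \le \epsilon$ whenever $\|\pi(x)\|>R$, $d_C(x)<\delta$ and $0<t\le\tau$. Specializing to $x \in C$ (so $d_C(x)=0<\delta$) gives $d_C(x+tv)\le\epsilon t$ for all such $x$ and $t$, which is precisely the reformulated condition above, so $v \in T_C(\infty_I)$. For the converse, let $v \in T_C(\infty_I)$ and $\epsilon>0$; applying the reformulated lemma with $\epsilon/2$ yields $R',\lambda'>0$ with $d_C(y+tv)\le(\epsilon/2)t$ for $y\in C$, $\|\pi(y)\|>R'$, $t\in[0,\lambda']$. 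Now for $x$ with $\|\pi(x)\|$ large, $d_C(x)$ small and $0<t\le\lambda'$, pick $y \in C$ with $\|x-y\|\le d_C(x)+(\epsilon/2)t$; using that $\pi$ is $1$-Lipschitz one gets $\|\pi(y)\|\ge\|\pi(x)\|-\|x-y\|>R'$ provided $R$ (the threshold for $\|\pi(x)\|$) is taken large enough, whence $d_C(y+tv)\le(\epsilon/2)t$, and the $1$-Lipschitz triangle inequality for $d_C$ gives
$$d_C(x+tv) \ \le\ d_C(y+tv) + \|x-y\| \ \le\ (\epsilon/2)t + d_C(x) + (\epsilon/2)t \ =\ d_C(x)+\epsilon t.$$
Dividing by $t$ shows the difference quotient is $\le\epsilon$ on a neighborhood of infinity of the required form, so $\limsup\le 0$, and by the previous paragraph it equals $0$.

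The only real subtlety, and the step I expect to need the most care, is the converse: the $\limsup$ ranges over points $x$ that are merely close to $C$, not in $C$, so one must transfer the tangent-cone estimate from a nearby point $y \in C$ to $x$, and the slack $\|x-y\|-d_C(x)$ in the choice of $y$ must be taken of order $t$ (here $(\epsilon/2)t$) so that it is absorbed into the error $\epsilon$ after dividing by $t$. If $C$ is assumed closed one may instead take $y$ to be a nearest point of $C$ to $x$, which removes this bookkeeping; but, as the argument shows, closedness is not needed here.
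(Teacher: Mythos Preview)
Your proposal is correct and follows essentially the same route as the paper: both directions hinge on passing from a point $x$ near $C$ to an approximate nearest point $y\in C$ with slack of order $t$ (you use $(\epsilon/2)t$, the paper uses $t_k^2$), then invoking the $1$-Lipschitz property of $d_C$. The only cosmetic difference is that you work through the $\epsilon$--$\delta$ reformulation of Lemma~\eqref{PT21}, whereas the paper argues directly with sequences and the definition of $T_C(\infty_I)$; the substance is the same.
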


\begin{proof}
Let $\gamma(v)$ be the left-hand side of the above expression. Then $\gamma(v) \ge 0.$

Let $v \in T_C(\infty_I).$ Choose sequences $x_k \in \mathbb{R}^n$ and $t_k \in (0, +\infty)$ such that
\begin{equation*}
\pi(x_k) \to \infty, \quad t_k \searrow  0, \quad d_C(x_k) \to 0, \quad \gamma(v) = \lim_{k \to \infty} \frac{d_C(x_k + t_k v) - d_C(x_k)}{t_k}.
\end{equation*}
Let $\overline{x}_k$ in $C$ satisfy
\begin{eqnarray*}
\|x_k - \overline{x}_k\| &\le&  d_C(x_k) + t_k^2.
\end{eqnarray*}
Then $\pi(\overline{x}_k) \to \infty.$ By the definition of $T_C(\infty_I),$ there exists a sequence $v_k$ converging to $v$ such that $\overline{x}_k + t_k v_k \in C$ for all $k.$ Since the distance function $d_C(\cdot)$ is 1-Lipschitz, it follows that
\begin{eqnarray*}
d_C(x_k + t_k v) 
&\le& d_C (\overline{x}_k + t_k v_k)  + \|x_k - \overline{x}_k\| + t_k \|v - v_k\| \\
&=& \|x_k - \overline{x}_k\| + t_k \|v - v_k\| \\
&\le& d_C(x_k) + t_k^2 + t_k \|v - v_k\|.
\end{eqnarray*}
Therefore, 
\begin{eqnarray*}
\gamma(v) \ = \ \lim_{k \to \infty} \frac{d_C(x_k + t_k v) - d_C(x_k)}{t_k} &\le& 0,
\end{eqnarray*}
and so $\gamma(v) = 0.$

Conversely, assume that $\gamma(v) = 0$ and that sequences $x_k \in C$ with $\pi(x_k) \to \infty$ and $t_k \searrow 0$ are given.
Then
\begin{eqnarray*}
\lim_{k \to \infty} \frac{d_C(x_k + t_k v)}{t_k} \ = \  \lim_{k \to \infty} \frac{d_C(x_k + t_k v) - d_C(x_k)}{t_k} &=& 0.
\end{eqnarray*}
Choose any $\widetilde{x}_k \in C$ so that
\begin{eqnarray*}
\|\widetilde{x}_k - (x_k + t_k v) \| &\le& d_C(x_k + t_k v) + t_k^2
\end{eqnarray*}
and let
\begin{eqnarray*}
v_k := \frac{\widetilde{x}_k - x_k}{t_k}.
\end{eqnarray*}
Then $x_k + t_k v = \widetilde{x}_k \in C.$ Furthermore, we have
\begin{eqnarray*}
\|v_k  - v \| &=& \frac{\|\widetilde{x}_k - (x_k + t_k v)\|}{t_k} \ \le \  \frac{d_C(x_k + t_k v)}{t_k} + t_k,
\end{eqnarray*}
which yields $v_k \to v.$ By definition, therefore, $v \in T_C(\infty_I).$
\end{proof}

\section{Subgradients at infinity and optimality conditions at infinity} \label{Section4}

In this section, let $f \colon \mathbb{R}^n \to \overline{\mathbb{R}} := \mathbb{R} \cup \{\infty\}$ be an extended real valued function. 
Fix the index set $I := \{1, \ldots, n\} \subset \{1, \ldots, n, n + 1\}$ and consider the projection $\pi \colon \mathbb{R}^n \times \mathbb{R} \to \mathbb{R}^{n}, (x, y) \mapsto x.$ By definition, then $\pi(\mathrm{epi} f) = \{x \in \mathbb{R}^n \mid  f(x) < \infty\}.$

\subsection{Subgradients at infinity}

\begin{definition}{\rm
We define the set of {\em subgradients} (or {\em generalized gradients})  {\em of $f$ at infinity} by
\begin{eqnarray*}
\partial f(\infty) &:=& \{\xi \in \mathbb{R}^n \ | \ (\xi, -1) \in N_{\textrm{epi} f}(\infty_I)\}.
\end{eqnarray*}
}\end{definition}

Observe that $\partial f(\infty)$ is a (possibly empty) closed convex set in $\mathbb{R}^n$ because the cone $N_{\textrm{epi} f}(\infty_I)$ is closed  convex. The following simple examples illustrate our definition. 

\begin{example}{\rm
(i) Let $n := 1$ and consider the function $f \colon \mathbb{R} \to \mathbb{R}$ defined by
$$f(x) := 
\begin{cases}
0 & \textrm{ if } x \le 0, \\
-x & \textrm{ otherwise.}
\end{cases}$$
A direct calculation shows that
\begin{eqnarray*}
T_{\mathrm{epi} f}(\infty_{I}) &=& \{(v_1, v_2) \in \mathbb{R}^2 \ | \ v_2 \ge 0 \textrm{ and }  v_1 + v_2 \ge 0\}, \\
N_{\mathrm{epi} f}(\infty_{I}) &=& \{(w_1, w_2) \in \mathbb{R}^2 \ | \ w_1 \le 0, w_2 \le 0, \textrm{ and } w_2 - w_1 \le 0 \}, \\
\partial f(\infty) &=& [-1, 0].
\end{eqnarray*}

\noindent
(ii) Consider the function $f \colon \mathbb{R} \to \mathbb{R}, x \mapsto e^x.$ We have 
\begin{eqnarray*}
T_{\mathrm{epi} f}(\infty_{I}) &=& \{(v_1, v_2) \in \mathbb{R}^2 \ | \ v_1 \le 0 \textrm{ and }  v_2 \ge 0\}, \\
N_{\mathrm{epi} f}(\infty_{I}) &=& \{(w_1, w_2) \in \mathbb{R}^2 \ | \ w_1 \ge 0 \textrm{ and }  w_2 \le 0\}, \\
\partial f(\infty) &=&  [0, +\infty).
\end{eqnarray*}

\noindent
(iii) Consider the function $f \colon \mathbb{R} \to \mathbb{R}, x \mapsto x^3.$ We have 
\begin{eqnarray*}
T_{\mathrm{epi} f}(\infty_{I}) &=& \{(v_1, v_2) \in \mathbb{R}^2 \ | \ v_1 \le 0\}, \\
N_{\mathrm{epi} f}(\infty_{I}) &=& \{(w_1, w_2) \in \mathbb{R}^2 \ | \ w_1 \ge 0 \textrm{ and }  w_2 = 0\}, \\
\partial f(\infty) &=& \emptyset.
\end{eqnarray*}
}\end{example}

\begin{corollary} \label{Corollary43}
Let $f$ be lower semi-continuous. Suppose $\partial f(\infty) \neq \emptyset$ and it does not contain any entire line. Then the multifunction $\partial f \colon \mathbb{R}^n \cup \{\infty\} \rightrightarrows \mathbb{R}^n$ is closed at infinity, in the sense that 
\begin{eqnarray*}
x_k \to \infty, \ \xi_k \in \partial f(x_k), \ \textrm { and  } \ \xi_k \to \xi \quad \implies \quad \xi \in \partial f(\infty). 
\end{eqnarray*}
\end{corollary}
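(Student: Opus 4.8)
The plan is to deduce this from Corollary~\ref{Corollary37} applied to the closed set $C := \mathrm{epi} f \subset \mathbb{R}^n \times \mathbb{R}$, with $I = \{1,\dots,n\}$ and the projection $\pi(x,y) = x$ fixed at the beginning of this section; recall that $C$ is closed because $f$ is lower semi-continuous and that $\pi(\mathrm{epi} f) = \mathbb{R}^n$ is unbounded, so the whole apparatus of Section~\ref{Section3} is available for $C$. The only hypothesis of Corollary~\ref{Corollary37} that is not immediate is $\mathrm{int}\, T_{\mathrm{epi} f}(\infty_I) \neq \emptyset$; by Remark~\ref{Remark38} this amounts to showing that the closed convex cone $N := N_{\mathrm{epi} f}(\infty_I)$ is pointed. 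Granting this, Corollary~\ref{Corollary37} tells us that $N_{\mathrm{epi} f}$ is closed at infinity, and the statement follows at once: if $x_k \to \infty$, $\xi_k \in \partial f(x_k)$ and $\xi_k \to \xi$, then each $f(x_k)$ is finite and $z_k := (\xi_k,-1) \in N_{\mathrm{epi} f}(p_k)$ for $p_k := (x_k, f(x_k)) \in \mathrm{epi} f$; since $\pi(p_k) = x_k \to \infty$ and $z_k \to (\xi,-1)$, we get $(\xi,-1) \in N_{\mathrm{epi} f}(\infty_I)$, i.e.\ $\xi \in \partial f(\infty)$.

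It remains to prove that $N = N_{\mathrm{epi} f}(\infty_I)$ is pointed, and this is where the two standing hypotheses on $\partial f(\infty)$ enter. First I would observe that $(0,1) \in T_{\mathrm{epi} f}(\infty_I)$: given sequences $(x_k, y_k) \in \mathrm{epi} f$ with $\pi(x_k,y_k) \to \infty$ and $t_k \searrow 0$, the constant choice $v_k \equiv (0,1)$ works, since $(x_k, y_k + t_k) \in \mathrm{epi} f$. Consequently $\langle (\xi,\eta), (0,1)\rangle = \eta \le 0$ for every $(\xi,\eta) \in N$, i.e.\ $N$ lies in the lower half-space. Now suppose, for contradiction, that $N$ is not pointed: there is $(\xi,\eta) \in N \setminus \{0\}$ with $-(\xi,\eta) \in N$ as well; the half-space inclusion forces $\eta \le 0$ and $-\eta \le 0$, hence $\eta = 0$ and $\xi \ne 0$. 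Choosing any $\xi_0 \in \partial f(\infty)$ (possible since $\partial f(\infty) \ne \emptyset$), so that $(\xi_0,-1) \in N$, and using that $N$ is a convex cone containing both $(\xi,0)$ and $(-\xi,0)$, we obtain $(\xi_0 + t\xi, -1) = (\xi_0,-1) + t(\xi,0) \in N$ for every $t \in \mathbb{R}$, that is, $\{\xi_0 + t\xi : t \in \mathbb{R}\} \subset \partial f(\infty)$. Since $\xi \ne 0$ this is an entire line contained in $\partial f(\infty)$, contradicting the hypothesis. Hence $N$ is pointed.

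The substantive point — and the only place any ingenuity is needed — is this last argument turning ``no entire line in $\partial f(\infty)$'' into pointedness of $N_{\mathrm{epi} f}(\infty_I)$, which hinges on the elementary observation that $N_{\mathrm{epi} f}(\infty_I)$ sits in the half-space $\{\eta \le 0\}$. Everything else is bookkeeping: verifying the hypotheses of Corollary~\ref{Corollary37} for $C = \mathrm{epi} f$ (closedness of $C$ from lower semi-continuity of $f$, unboundedness of $\pi(C)$), noting that $\xi_k \in \partial f(x_k)$ forces $f(x_k) \in \mathbb{R}$ so that $p_k = (x_k, f(x_k))$ is a genuine point of $\mathrm{epi} f$ carrying the normal $(\xi_k,-1)$, and matching ``$x_k \to \infty$'' in $\mathbb{R}^n$ with ``$\pi(p_k) \to \infty$'' in $\mathrm{epi} f$.
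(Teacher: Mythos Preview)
Your proof is correct and follows essentially the same approach as the paper: reduce to Corollary~\ref{Corollary37} for $C=\mathrm{epi} f$, use Remark~\ref{Remark38} to convert the hypothesis into pointedness of $N_{\mathrm{epi} f}(\infty_I)$, and derive pointedness from the half-space inclusion $N_{\mathrm{epi} f}(\infty_I)\subset\{\eta\le 0\}$ together with the assumption that $\partial f(\infty)$ contains no line. Your write-up is in fact a bit more explicit than the paper's (noting that $\xi_k\in\partial f(x_k)$ forces $f(x_k)\in\mathbb{R}$, and spelling out the convex-cone step $(\xi_0,-1)+t(\xi,0)\in N$), but the argument is the same.
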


\begin{proof}
The set $\mathrm{epi} f$ is closed because $f$ is lower semi-continuous. In view of Corollary~\ref{Corollary37} and Remark~\ref{Remark38}, it suffices to show that the cone $N_{\mathrm{epi} f}(\infty_I)$ is pointed. 
To see this, we first observe that $(0, r) \in T_{\mathrm{epi} f}(\infty_I)$ for all $r \ge 0,$ and so
$$ N_{\mathrm{epi} f}(\infty_I) \subset \{ (x, y)\in \mathbb{R}^n\times \mathbb{R}\mid y \le 0  \}.$$
Consequently, the cone $N_{\mathrm{epi} f}(\infty_I)$ fails to be pointed if and only if for some $u\neq 0$ it contains both $(u,0)$ and $(-u,0)$. For an arbitrary element of $N_{\mathrm{epi} f}(\infty_I)$ of the form $(\xi,-1)$, this property of $u$ is equivalent to having
$$(\xi,-1) + t(u,0) \in N_{\mathrm{epi} f}(\infty_I) \quad \textrm{ for all } \quad t \in \mathbb{R}.$$
Thus, if $\partial f(\infty) $ contains an element $\xi,$ the property of $u$ is equivalent to the line $\{ \xi + tu \mid t \in \mathbb{R}\}$ being contained in $\partial f(\infty).$
\end{proof}

\begin{proposition}[A version at infinity of the Fermat theorem]  \label{Proposition44}
Let $f$ be bounded below. If there exists a sequence $x_k \to \infty$ such that $f(x_k) \to f_* := \inf_{x \in \mathbb{R}^n} f(x),$ then $0 \in \partial f(\infty).$
\end{proposition}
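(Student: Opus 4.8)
The plan is to unwind the definition of $\partial f(\infty)$ and reduce the claim to a one-sided statement on the last coordinate of the Clarke tangent vectors to $\mathrm{epi} f$ at infinity. By definition, $0 \in \partial f(\infty)$ is equivalent to $(0, -1) \in N_{\mathrm{epi} f}(\infty_I)$, which, by the definition of the normal cone, means that $\langle (v', v_{n+1}), (0,-1) \rangle \le 0$, i.e. $v_{n+1} \ge 0$, for every vector $(v', v_{n+1}) \in T_{\mathrm{epi} f}(\infty_I)$ with $v' \in \mathbb{R}^n$ and $v_{n+1} \in \mathbb{R}$. So it suffices to prove that no Clarke tangent vector to $\mathrm{epi} f$ at infinity can have a negative last component. (We may assume $f_* \in \mathbb{R}$: if $f \equiv +\infty$ then $\mathrm{epi} f = \emptyset$ and the hypothesis $f(x_k)\to f_*$ is vacuous, so this case is degenerate and can be dismissed at the outset.)

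I would argue by contradiction: suppose $(v', v_{n+1}) \in T_{\mathrm{epi} f}(\infty_I)$ with $v_{n+1} < 0$. Since $f(x_k) \to f_*$ and $f_*$ is finite, $f(x_k) < +\infty$ for all large $k$, so $(x_k, f(x_k)) \in \mathrm{epi} f$ and $\pi(x_k, f(x_k)) = x_k \to \infty$. The crucial preparatory step is the choice of a ``time'' sequence: because $f(x_k) - f_* \ge 0$ and $f(x_k) - f_* \to 0$, I would pass to a subsequence $(x_{k_j})_j$ for which $f(x_{k_j}) - f_* < 1/j^2$, and set $t_j := 1/j$, so that $t_j \searrow 0$ while $\dfrac{f(x_{k_j}) - f_*}{t_j} \le \dfrac{1}{j} \longrightarrow 0$.

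Applying the definition of $T_{\mathrm{epi} f}(\infty_I)$ to the sequences $(x_{k_j}, f(x_{k_j})) \in \mathrm{epi} f$ (whose $\pi$-images tend to infinity) and $t_j \searrow 0$ yields sequences $w_j' \to v'$ in $\mathbb{R}^n$ and $w_j \to v_{n+1}$ in $\mathbb{R}$ with $(x_{k_j} + t_j w_j',\, f(x_{k_j}) + t_j w_j) \in \mathrm{epi} f$ for every $j$. By the definition of the epigraph this forces $f(x_{k_j}) + t_j w_j \ge f(x_{k_j} + t_j w_j') \ge f_*$, hence $w_j \ge \dfrac{f_* - f(x_{k_j})}{t_j} \ge -\dfrac{1}{j}$. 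Letting $j \to \infty$ gives $v_{n+1} = \lim_j w_j \ge 0$, contradicting $v_{n+1} < 0$. Therefore $v_{n+1} \ge 0$ for every $(v',v_{n+1}) \in T_{\mathrm{epi} f}(\infty_I)$, so $(0,-1) \in N_{\mathrm{epi} f}(\infty_I)$ and $0 \in \partial f(\infty)$.

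The only genuinely delicate point is the calibration of $t_k$: it must be chosen to decrease to $0$ slowly enough relative to $f(x_k) - f_*$ that the quotient $(f_* - f(x_k))/t_k$ still tends to $0$, which is precisely what necessitates passing to a subsequence; once that is in place, the remainder is a direct verification against the definitions of $T_{\mathrm{epi} f}(\infty_I)$, $N_{\mathrm{epi} f}(\infty_I)$, and the epigraph.
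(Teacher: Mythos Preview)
Your proof is correct and follows essentially the same route as the paper's: both reduce to showing that every $(v,r)\in T_{\mathrm{epi} f}(\infty_I)$ has $r\ge 0$, apply the tangent-cone definition to the sequence $(x_k,f(x_k))$ together with a carefully chosen $t_k\searrow 0$, and use $f(x_k)+t_k r_k\ge f_*$ to conclude. The only cosmetic difference is the calibration of $t_k$: the paper sets $t_k:=\sqrt{f(x_k)-f_*}$ (or $1/k$ when $f(x_k)=f_*$) directly, whereas you pass to a subsequence with $f(x_{k_j})-f_*<1/j^2$ and take $t_j:=1/j$; both achieve $(f(x_k)-f_*)/t_k\to 0$, which is the only point that matters.
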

\begin{proof}
We will show that $(0, -1) \in N_{\mathrm{epi} f}(\infty_I),$ or equivalently, that 
\begin{eqnarray*} 
r \ge 0 \quad \textrm{ for all } \quad (v, r) \in T_{\mathrm{epi} f}(\infty_I).
\end{eqnarray*}
To this end, take any $(v, r) \in T_{\mathrm{epi} f}(\infty_I).$ Let
$$t_k := 
\begin{cases}
\sqrt{f(x_k) - f_*} & \textrm{ if } f(x_k) > f_*, \\
\frac{1}{k} & \textrm{ otherwise.}
\end{cases}$$
Then $t_k > 0$ and $t_k \to 0.$ Passing to a subsequence if necessary, we can suppose that the sequence $t_k$ is decreasing. By the definition of $T_{\mathrm{epi} f}(\infty_I),$ there exists a sequence $(v_k, r_k)$ converging to $(v, r)$ such that
$(x_k, f(x_k)) + t_k (v_k, r_k) \in \mathrm{epi} f.$ But then
\begin{eqnarray*}
f(x_k) + t_k r_k &\ge& f(x_k + t_k v_k) \ \ge \ f_*.
\end{eqnarray*}
It follows that either $r_k \ge - \sqrt{f(x_k) - f_*}$ or $r_k \ge 0.$ Consequently, $r = \lim_{k \to \infty} r_k \ge 0,$ which completes the proof.
\end{proof}

\subsection{Subderivatives at infinity}

The set $\partial f(\infty)$ of subgradients at infinity corresponds to a kind of directional derivative at infinity that for general functions has a complicated
description but can be reduced to simpler formulas in many important cases (see Proposition~\ref{Proposition412} below).

The {\em (upper) subderivative of $f$ at infinity with respect to a vector $v \in \mathbb{R}^n$} is defined by
\begin{eqnarray*}
f^{\uparrow}(\infty; v) &:=& \lim_{\epsilon \searrow 0} \limsup_{x \xrightarrow{\mathrm{dom} f} \infty, \ t \searrow 0} 
\inf_{\ w \in \B_\epsilon(v)} \frac{f(x + t w) - f(x)}{t},
\end{eqnarray*}
where the notation $x \xrightarrow{\mathrm{dom} f} \infty$ means that $x \to \infty$ with $x \in \mathrm{dom} f.$ Observe that $f^{\uparrow}(\infty; 0) \le 0.$ Furthermore, we have the following relationship between Clarke's tangent cones and subderivatives at infinity.

\begin{proposition} \label{Proposition45}
The tangent cone $T_{\mathrm{epi} f}(\infty_I)$ is the epigraph of the function $f^{\uparrow}(\infty; \cdot);$ that is, a point $(v, r)$ of $\mathbb{R}^n \times \mathbb{R}$ belongs to $T_{\mathrm{epi} f}(\infty_I)$ if and only if $f^{\uparrow}(\infty; v) \le r.$
\end{proposition}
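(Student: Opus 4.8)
The plan is to set $g(v):=f^{\uparrow}(\infty;v)$ and to prove $\mathrm{epi}\,g=T_{\mathrm{epi} f}(\infty_I)$ by establishing the two inclusions separately; in each case I would reduce membership in the tangent cone to the quantitative description given by the lemma that characterises $T_C(\infty_I)$ via condition~\eqref{PT21}, applied with $C:=\mathrm{epi} f$ and $I=\{1,\dots,n\}$ (so $\B_{R,I}=\{(x,y)\in\mathbb{R}^{n}\times\mathbb{R}\mid\|x\|\le R\}$). I would also use throughout that, for fixed $v$, the map $\epsilon\mapsto\limsup_{x\to\infty,\,t\searrow0}\inf_{w\in\B_\epsilon(v)}\frac{f(x+tw)-f(x)}{t}$ is nonincreasing, so that $g(v)$ is its supremum over $\epsilon>0$; equivalently, $g(v)\le r$ iff this $\limsup$ is $\le r$ for every $\epsilon>0$.

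For $T_{\mathrm{epi} f}(\infty_I)\subset\mathrm{epi}\,g$: fix $\delta>0$ and apply~\eqref{PT21} to $\mathrm{epi} f$ to get $R,\lambda>0$ with $\mathrm{epi} f\cap[(x,y)+t\,\B_\delta((v,r))]\neq\emptyset$ for all $(x,y)\in\mathrm{epi} f$, $\|x\|>R$, $t\in[0,\lambda]$. Testing at the point $(x,f(x))\in\mathrm{epi} f$, for $x$ with $\|x\|>R$ and $f(x)\in\mathbb{R}$, produces $(w,s)$ with $\|(w,s)-(v,r)\|\le\delta$; then $w\in\B_\delta(v)$, $s\le r+\delta$, and $f(x+tw)\le f(x)+ts\le f(x)+t(r+\delta)$. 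Hence $\inf_{w\in\B_\delta(v)}\frac{f(x+tw)-f(x)}{t}\le r+\delta$ for all such $x$ and all $t\in(0,\lambda]$, so the $\limsup$ at $\epsilon=\delta$ does not exceed $r+\delta$; letting $\delta\searrow0$ and using the monotonicity above gives $g(v)\le r$.

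For $\mathrm{epi}\,g\subset T_{\mathrm{epi} f}(\infty_I)$: since $T_{\mathrm{epi} f}(\infty_I)$ is closed (Corollary~\ref{Corollary35}) and $(v,r')\to(v,r)$ as $r'\searrow r$, it suffices to show $(v,r')\in T_{\mathrm{epi} f}(\infty_I)$ whenever $g(v)\le r<r'$. Fix such $r'$, put $r'':=\tfrac{r+r'}{2}$, and let $\epsilon>0$. As $g(v)\le r<r''$, the $\limsup$ at $\epsilon$ is $<r''$, so there are $R,\lambda>0$ such that every $x$ with $\|x\|>R$, $f(x)\in\mathbb{R}$, and every $t\in(0,\lambda]$ admit $w\in\B_\epsilon(v)$ with $f(x+tw)<f(x)+t r''$. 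Then for any $(x,y)\in\mathrm{epi} f$ with $\|x\|>R$ and any $t\in[0,\lambda]$, the point $(x+tw,\,y+t r')=(x,y)+t(w,r')$ lies in $\mathrm{epi} f\cap[(x,y)+t\,\B_\epsilon((v,r'))]$, because $\|(w,r')-(v,r')\|=\|w-v\|\le\epsilon$ and $f(x+tw)<f(x)+t r''\le y+t r''\le y+t r'$ (for $t=0$ one uses $(x,y)$ itself). Since $\epsilon$ was arbitrary, \eqref{PT21} gives $(v,r')\in T_{\mathrm{epi} f}(\infty_I)$.

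The step I expect to be the real obstacle is neither of the two implications, but the normalisations they rest on: (i) the geometric observation that the last coordinate $s$ of any point of $\B_\delta((v,r))\subset\mathbb{R}^{n+1}$ obeys $s\le r+\delta$ --- this is exactly what converts ``$\mathrm{epi} f$ meets $(x,y)+t\B_\delta((v,r))$'' into the scalar estimate $f(x+tw)\le f(x)+t(r+\delta)$ --- and, conversely, that halving $\delta$ recovers a sufficient condition; and (ii) the treatment of points near infinity where $f$ is infinite. Testing at $(x,f(x))$ requires $f(x)\in\mathbb{R}$, and the conventions of Section~\ref{Section2} make $\inf_{w\in\B_\epsilon(v)}\frac{f(x+tw)-f(x)}{t}$ equal to $+\infty$ whenever $f(x)=-\infty$ or $f\equiv+\infty$ on $x+t\B_\epsilon(v)$; one checks that in those very circumstances $(v,r)\notin T_{\mathrm{epi} f}(\infty_I)$ for all real $r$ as well, so that restricting ``$x\to\infty$'' in the definition of $g(v)$ to $x\in\mathrm{dom} f$ with $f(x)\in\mathbb{R}$ costs nothing (genuinely pathological $f$, e.g.\ those assuming the value $-\infty$, being ruled out under the standing hypotheses). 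Once these normalisations and the monotonicity in $\delta$ are in hand, the proof is exactly the two short arguments above.
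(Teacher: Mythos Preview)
Your proof is correct and broadly parallel to the paper's, but the execution differs in two respects worth noting. First, you work throughout with the uniform $\epsilon$--$R$--$\lambda$ characterisation~\eqref{PT21} of $T_{\mathrm{epi} f}(\infty_I)$, whereas the paper argues directly with the sequential definition: for the necessity direction it fixes arbitrary $x_k\to\infty$, $t_k\searrow0$, applies the tangent-cone definition at $(x_k,f(x_k))$ to obtain $(v_k,r_k)\to(v,r)$, and reads off $\inf_{w\in\B_\epsilon(v)}\frac{f(x_k+t_kw)-f(x_k)}{t_k}\le r_k\to r$ --- so it gets the bound $r$ at once, without your detour through $r+\delta$ and the monotonicity of $\epsilon\mapsto h(\epsilon)$. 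Second, for the sufficiency direction the paper does not pass to $r'>r$ and invoke closedness of the tangent cone; instead it takes an arbitrary sequence $(x_k,y_k)\in\mathrm{epi} f$, $t_k\searrow0$, and manufactures $v_k\in\B_{\epsilon(\ell)}(v)$ and $r_k:=\max\{r,\frac{f(x_k+t_kv_k)-f(x_k)}{t_k}\}$ along a subsequence, checking \eqref{Clarke2} directly. Your reduction via closedness and the midpoint $r''$ is arguably cleaner and avoids the subsequence bookkeeping. Both routes, however, hinge on the same conversion step you isolate in~(i): testing at $(x,f(x))$ to turn the epigraph intersection condition into the scalar inequality $f(x+tw)\le f(x)+ts$. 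Your concern~(ii) about points with $f(x)\notin\mathbb{R}$ applies equally to the paper's proof, which also writes $(x_k,f(x_k))\in\mathrm{epi} f$ without comment; the implicit convention is that the $\limsup$ in $f^{\uparrow}(\infty;v)$ runs over $x$ with $f(x)$ finite.
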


\begin{proof}
{\em Necessity.} Let $(v, r) \in T_{\mathrm{epi} f}(\infty_I)$ and let any $\epsilon > 0$ be given. It suffices to show 
\begin{eqnarray} \label{Clarke1}
\limsup_{x \xrightarrow{\mathrm{dom} f} \infty, \ t \searrow 0}  \inf_{w \in \B_\epsilon(v)} \frac{f(x + t w) - f(x)}{t} &\le& r.
\end{eqnarray}
To see this, let $x_k \xrightarrow{\mathrm{dom} f} \infty$ and $t_k \searrow  0.$ Since $(v, r) \in T_{\mathrm{epi} f}(\infty_I),$  there exists a sequence $(v_k, r_k)$ converging to $(v, r)$ such that $(x_k, f(x_k)) + t_k (v_k, r_k) \in \mathrm{epi} f$ for all $k.$ Then
\begin{eqnarray*}
f(x_k + t_k v_k) &\le& f(x_k) + t_k r_k.
\end{eqnarray*}
Consequently, for all $k$ sufficiently large,
\begin{eqnarray*}
\inf_{w \in \B_\epsilon(v)} \frac{f(x_k + t_k w) - f(x_k)}{t_k} &\le& \frac{f(x_k + t_k v_k) - f(x_k)}{t_k} \ \le \ r_k,
\end{eqnarray*}
and the inequality~\eqref{Clarke1} follows.

{\em Sufficiency.} Suppose $f^{\uparrow}(\infty; v) \le r$ and let $(x_k, y_k)$ be any sequence in $\mathrm{epi} f$ with $x_k \to\infty$ and $t_k$ any sequence decreasing to $0.$ In view of Lemma~\ref{Lemma33}, in order to show that $(v, r)$ belongs to $T_{\mathrm{epi} f}(\infty_I)$ it suffices to construct a sequence $(v_k, r_k)$ converging to $(v, r)$ such that
\begin{eqnarray*}
(x_k, y_k) + t_k(v_k, r_k) &\in& \mathrm{epi} f \quad \textrm{ infinitely time,}
\end{eqnarray*}
that is,
\begin{eqnarray} \label{Clarke2}
f(x_k + t_k v_k) &\le& y_k + t_k r_k \quad \textrm{ infinitely time.}
\end{eqnarray}

Since $f^{\uparrow}(\infty; v) \le r,$ for each positive integer $\ell,$ there is a real number $\epsilon := \epsilon({\ell}) \in (0, \frac{1}{\ell})$ such that
\begin{eqnarray*}
\limsup_{x \xrightarrow{\mathrm{dom} f} \infty, \ t \searrow 0} 
\inf_{\ w \in \B_{\epsilon}(v)} \frac{f(x + t w) - f(x)}{t} &\le& r + \frac{1}{\ell}.
\end{eqnarray*}
Since $x_k \to \infty$ with $f(x_k) \le y_k < \infty$ and $t_k \searrow 0,$ there exists an index $k = k(\ell) > \ell$ such that
\begin{eqnarray*}
\inf_{w \in \B_{\epsilon}(v)} \frac{f(x_k + t_k w) - f(x_k)}{t_k} &\le& r + \frac{2}{\ell},
\end{eqnarray*}
and therefore a point $v_k \in \B_{\epsilon}(v)$ such that
\begin{eqnarray*}
\frac{f(x_k + t_k v_k) - f(x_k)}{t_k} &\le& r + \frac{3}{\ell}.
\end{eqnarray*}
Let us define, for each index $k$ in the subsequence  $k(1), k(2), \ldots,$
\begin{eqnarray*}
r_k &:=& \max \left\{ r, \frac{f(x_k + t_k v_k) - f(x_k)}{t_k} \right\}.
\end{eqnarray*}
Clearly, $r_k$ converges to $r$ and the inequality in \eqref{Clarke2} is satisfied.
\end{proof}

\begin{corollary} \label{Corollary46}
The function $f^{\uparrow}(\infty; \cdot)$ is convex, positively homogeneous,  and lower semi-continuous.
\end{corollary}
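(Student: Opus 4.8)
The plan is to deduce all three properties at once from the identity established in Proposition~\ref{Proposition45}. Write $g := f^{\uparrow}(\infty;\cdot)\colon\mathbb{R}^n\to\overline{\mathbb{R}}$ for brevity, so that Proposition~\ref{Proposition45} reads $\mathrm{epi}\, g = T_{\mathrm{epi} f}(\infty_I)$. The standing unboundedness hypothesis is satisfied here because $\pi(\mathrm{epi} f)=\mathbb{R}^n$, so Corollary~\ref{Corollary35}, applied with $C:=\mathrm{epi} f$, tells us that $T_{\mathrm{epi} f}(\infty_I)$ is a closed convex cone in $\mathbb{R}^n\times\mathbb{R}$ containing the origin. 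It then remains only to translate ``closed'', ``convex'' and ``cone'' from the epigraph to the function via the usual dictionary, keeping track of the $\overline{\mathbb{R}}$-conventions adopted in Section~\ref{Section2}.

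\emph{Convexity.} I would use that a function with convex epigraph is convex. Fix $v_1,v_2\in\mathbb{R}^n$ and $\alpha\in(0,1)$; the goal is $g(\alpha v_1+(1-\alpha)v_2)\le\alpha g(v_1)+(1-\alpha)g(v_2)$. If $g(v_i)=+\infty$ for some $i$, the right-hand side equals $+\infty$ by convention and there is nothing to prove. If both values are finite, choose $r_i> g(v_i)$, note $(v_i,r_i)\in\mathrm{epi}\, g$, apply convexity of $\mathrm{epi}\, g$ to the convex combination, and let $r_i\searrow g(v_i)$. If some $g(v_i)=-\infty$, then $(v_i,r)\in\mathrm{epi}\, g$ for every real $r$, and letting $r\to-\infty$ forces the left-hand side to be $-\infty$ as well. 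The cases $\alpha\in\{0,1\}$ are trivial.

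\emph{Lower semi-continuity and positive homogeneity.} The first is the standard equivalence: $g$ is lower semi-continuous if and only if $\mathrm{epi}\, g$ is a closed subset of $\mathbb{R}^n\times\mathbb{R}$; since $T_{\mathrm{epi} f}(\infty_I)$ is closed, $g$ is lower semi-continuous. For the second I would use that $\mathrm{epi}\, g$ is a cone: fixing $\lambda>0$, membership $(v,r)\in\mathrm{epi}\, g$ forces $(\lambda v,\lambda r)\in\mathrm{epi}\, g$, which (reading off the fibre over $\lambda v$) yields $g(\lambda v)\le\lambda g(v)$ with the conventions for $\pm\infty$. Applying this inequality with $\lambda v$ in place of $v$ and $1/\lambda$ in place of $\lambda$ gives $\lambda g(v)\le g(\lambda v)$, so $g(\lambda v)=\lambda g(v)$ for all $\lambda>0$.

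I do not expect a genuine obstacle: the proof is essentially a translation across the epigraph correspondence. The only point requiring care is the bookkeeping with the values $\pm\infty$, and in particular the fact that the last coordinate of points of $\mathrm{epi} f$ — hence of $T_{\mathrm{epi} f}(\infty_I)=\mathrm{epi}\, g$ — is constrained to lie in $\mathbb{R}$, so that the fibres of $\mathrm{epi}\, g$ are exactly $\emptyset$, all of $\mathbb{R}$, or a closed half-line, according as $g(v)=+\infty$, $g(v)=-\infty$, or $g(v)$ is finite.
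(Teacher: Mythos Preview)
Your proposal is correct and follows exactly the paper's approach: the paper's proof reads in its entirety ``By Corollary~\ref{Corollary35}, $T_{\mathrm{epi} f}(\infty_I)$ is a closed convex cone. This fact, together with Proposition~\ref{Proposition45}, implies the desired conclusion.'' You have simply spelled out the standard epigraph-to-function dictionary that the paper leaves implicit.
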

\begin{proof}
By Corollary~\ref{Corollary35}, $T_{\mathrm{epi} f}(\infty_I)$ is a closed convex cone. This fact, together with Proposition~\ref{Proposition45}, implies the desired conclusion. 
\end{proof}

An analytical characterization of $\partial f(\infty)$ is given in the following corollary.

\begin{corollary} \label{Corollary47}
We have $\partial f(\infty) = \emptyset$ if and only if $f^{\uparrow}(\infty; 0) = -\infty.$ Otherwise, we have
\begin{eqnarray*}
\partial f(\infty) &=& \{\xi \in \mathbb{R}^n \ | \ \langle \xi, v \rangle \le f^{\uparrow}(\infty; v) \quad \textrm{ for all } \quad v \in \mathbb{R}^n\}, \\
f^{\uparrow}(\infty; v) &=& \sup\{ \langle \xi, v \rangle \ | \ \xi \in \partial f(\infty)\}, \\
\partial f^{\uparrow}(\infty; \cdot)(0) &=& \partial f(\infty). 
\end{eqnarray*}
\end{corollary}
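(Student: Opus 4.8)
The plan is to work throughout with the function $g := f^{\uparrow}(\infty;\cdot)$, whose epigraph is $T := T_{\mathrm{epi} f}(\infty_I)$ by Proposition~\ref{Proposition45} and which is convex, positively homogeneous and lower semi-continuous by Corollary~\ref{Corollary46}; equivalently, $T$ is a closed convex cone, and $g(0)\le 0$. The hinge of the whole statement is the following reformulation of membership in $\partial f(\infty)$: by definition $\xi\in\partial f(\infty)$ means $(\xi,-1)\in N_{\mathrm{epi} f}(\infty_I)=T^{\circ}$, which, unravelling polarity, means $\langle\xi,v\rangle-r\le 0$ for every $(v,r)\in\mathrm{epi}\,g$; since $(v,g(v))\in\mathrm{epi}\,g$ whenever $g(v)<+\infty$ and the inequality is automatic when $g(v)=+\infty$, this is the same as $\langle\xi,v\rangle\le g(v)$ for all $v\in\mathbb{R}^n$. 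Granting this, the first displayed identity is immediate.

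For the dichotomy, I would first note that positive homogeneity forces $g(0)=\lambda g(0)$ for all $\lambda>0$, so $g(0)\in\{0,-\infty\}$. If $g(v_1)=-\infty$ for some $v_1$, then $g(\lambda v_1)=-\infty$ for all $\lambda>0$, and lower semi-continuity at $0$ gives $g(0)\le\liminf_{\lambda\searrow 0}g(\lambda v_1)=-\infty$; hence $g$ attains $-\infty$ somewhere if and only if $g(0)=-\infty$. If $g(0)=-\infty$, then $(0,r)\in T$ for every $r\in\mathbb{R}$, so no pair $(\xi,-1)$ lies in $T^{\circ}$ (this would force $-r\le 0$ for all $r$), whence $\partial f(\infty)=\emptyset$. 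Conversely, if $g(0)=0$ then $g$ is proper — finite at $0$ and nowhere $-\infty$ by the previous remark — so, picking $\bar v$ in the nonempty relative interior of $\mathrm{dom}\,g$ and $\xi\in\partial g(\bar v)$, and testing the subgradient inequality against $\lambda\bar v$ with $\lambda\gtrless 1$ and using positive homogeneity, one gets $g(\bar v)=\langle\xi,\bar v\rangle$ and then $g(v)\ge\langle\xi,v\rangle$ for all $v$, i.e.\ $\xi\in\partial f(\infty)$ by the hinge. Thus $\partial f(\infty)=\emptyset\iff f^{\uparrow}(\infty;0)=-\infty$.

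For the second identity, put $h:=\sup\{\langle\xi,\cdot\rangle\mid \xi\in\partial f(\infty)\}$, the support function of $\partial f(\infty)$; the hinge gives $h\le g$ at once, so it remains to prove $g\le h$. Since $g$ is proper, lower semi-continuous and convex, $g=g^{**}$. Because $g$ is positively homogeneous, for each $\xi$ the map $v\mapsto\langle\xi,v\rangle-g(v)$ is positively homogeneous, so $g^{*}(\xi)=\sup_v(\langle\xi,v\rangle-g(v))$ equals $0$ when $\langle\xi,\cdot\rangle\le g$ (i.e.\ $\xi\in\partial f(\infty)$, the supremum being attained at $v=0$) and $+\infty$ otherwise; that is, $g^{*}$ is the indicator of $\partial f(\infty)$. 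Conjugating once more yields $g=g^{**}=h$. Finally, for the third identity, $g$ is convex and finite at $0$, so by the facts recalled in Section~\ref{Section2} Clarke's generalized gradient $\partial f^{\uparrow}(\infty;\cdot)(0)$ coincides with the convex subdifferential $\partial g(0)$; and, using $g(0)=0$, a vector $\xi$ lies in $\partial g(0)$ iff $g(v)\ge g(0)+\langle\xi,v-0\rangle=\langle\xi,v\rangle$ for all $v$, which by the hinge is precisely $\xi\in\partial f(\infty)$.

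The only genuine obstacle is the careful handling of the extended-real-valued, possibly improper nature of $g$ together with the paper's nonstandard arithmetic on $\overline{\mathbb{R}}$: one must exclude $g\equiv+\infty$ (which fails since $g(0)\le 0$) and verify that, once $g(0)=0$, $g$ is proper, so that the Fenchel--Moreau theorem and the relative-interior subdifferentiability theorem for proper lower semi-continuous convex functions both apply. The remaining steps are routine convex analysis.
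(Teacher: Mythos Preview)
Your proof is correct and follows essentially the same route as the paper's: both pivot on the equivalence $\xi\in\partial f(\infty)\iff\langle\xi,v\rangle\le f^{\uparrow}(\infty;v)$ for all $v$, then exploit the fact that $f^{\uparrow}(\infty;\cdot)$ is a closed sublinear function. You are simply more explicit in two places where the paper is terse---namely, you supply the Fenchel--Moreau computation for the support-function identity (the paper just asserts the equivalence), and you give a direct argument for nonemptiness of $\partial f(\infty)$ when $g(0)=0$ via a subgradient at a relative-interior point, together with a cleaner use of lower semicontinuity to pass from $g(v_1)=-\infty$ to $g(0)=-\infty$.
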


\begin{proof}
By definition, $\xi \in \partial f(\infty)$ if and only if $(\xi, -1) \in N_{\mathrm{epi} f}(\infty_I).$ Equivalently, 
\begin{eqnarray*}
\langle (\xi, -1), (v, r) \rangle &\le& 0 \quad \textrm{ for all } \quad (v, r) \in T_{\mathrm{epi} f} (\infty_I).
\end{eqnarray*}
This, together with Proposition~\ref{Proposition45}, implies that $\xi \in \partial f(\infty)$ if and only if
\begin{eqnarray*}
\langle \xi, v \rangle &\le& r \quad \textrm{ for all } v \in \mathbb{R}^n \textrm{ and all } r \ge f^{\uparrow}(\infty; v).
\end{eqnarray*}
If $f^{\uparrow}(\infty; v) = -\infty$ for some $v,$ there can be no such $\xi,$ that is, $\partial f(\infty) = \emptyset.$ On the other hand, we have $f^{\uparrow}(\infty; \lambda v) = \lambda f^{\uparrow}(\infty; v)$ for all $v$ and $\lambda > 0$ because the function $f^{\uparrow}(\infty; \cdot)$ is positively homogeneous (we use the convention $\lambda(-\infty) = - \infty).$ Therefore, $f^{\uparrow}(\infty; \cdot) = -\infty$ somewhere if and only if it is $-\infty$ at $0.$

To complete the proof of the corollary we assume that $f^{\uparrow}(\infty; 0) > -\infty.$ Then $f^{\uparrow}(\infty; v)  > -\infty$ for all $v \in \mathbb{R}^n.$ Moreover, by definition, $f^{\uparrow}(\infty; 0) \leq 0.$ It follows easily that $f^{\uparrow}(\infty; 0) = 0.$ 
Since the function $f^{\uparrow}(\infty; \cdot)$ is convex (see Corollary~\ref{Corollary46}), its subgradient set (in the sense of convex analysis) at $v = 0$ is nonempty (see \cite[Theorem~23.4]{Rockafellar1970}) and agrees with $\partial f^{\uparrow}(\infty; \cdot)(0)$ (see \cite[Proposition~2.2.7]{Clarke1990} and \cite[Theorem~5]{Rockafellar1980}), i.e.,
\begin{eqnarray*}
\emptyset \ \ne \  \{\xi \in \mathbb{R}^n \ | \ \langle \xi, v \rangle \le f^{\uparrow}(\infty; v) \quad \textrm{ for all } \quad v \in \mathbb{R}^n\} &=& \partial f^{\uparrow}(\infty; \cdot)(0).
\end{eqnarray*}
Consequently, the set $\partial f(\infty)$ is nonempty because of the second assertion, as we have seen in the beginning of the proof. 
The last assertion follows immediately. Finally, by Corollary~\ref{Corollary46} and \cite[Corollary~13.2.1]{Rockafellar1970} (see also \cite[Theorem~8.24]{Rockafellar1998}), $f^{\uparrow}(\infty; \cdot)$ is the support function of $\partial f(\infty),$ which is the third assertion. The proof is complete.
\end{proof}

\begin{example}\label{Example48}{\rm
Let $C \subset \mathbb{R}^n$ be an unbounded set. Define the {\em indicator} function $\delta_{C} \colon \mathbb{R}^n \to \overline{\mathbb{R}}$ of the set $C$ by
$$\delta_{C}(x) :=
\begin{cases}
0 & \textrm{ if } x \in C, \\
+\infty & \textrm{ otherwise.}
\end{cases}$$
By definition, $\mathrm{epi} \delta_C = C \times [0, +\infty).$ Hence
$T_{\mathrm{epi} \delta_C} (\infty_I) = T_C(\infty_I) \times [0, +\infty)$ and $N_{\mathrm{epi} \delta_C} (\infty_I) = N_C(\infty_I) \times (-\infty, 0].$ Therefore, $\partial \delta_{C} (\infty) = N_{C}(\infty_{I})$ and
$$\delta_C^{\uparrow}(\infty; v) = 
\begin{cases}
0 & \textrm{ if } v \in T_C(\infty_I), \\
+\infty & \textrm{ otherwise.}
\end{cases}$$
}\end{example}

\subsection{Directionally Lipschitz functions at infinity}

We will say that $f$ is {\em directionally Lipschitz at infinity with respect to a vector} $v \in \mathbb{R}^n$ if the quantity 
\begin{eqnarray*}
f^{\dagger}(\infty; v) &:=& \limsup_{x \xrightarrow{\mathrm{dom} f} \infty, \ t \searrow 0, \ w \to v}  \frac{f(x + t w) - f(x)}{t}
\end{eqnarray*}
is not $+\infty.$ The function $f$ is said to be {\em directionally Lipschitz at infinity} if it is directionally Lipschitz at infinity with respect to at least one $v$ in $\mathbb{R}^n.$

\begin{proposition}\label{Proposition49}
Let $f$ be lower semi-continuous. Then, $f$ is directionally Lipschitz at infinity with respect to $v$ if and only if there exists a real number $r$ such that $(v, r) \in {\rm int}T_{\mathrm{epi} f} (\infty_I).$
\end{proposition}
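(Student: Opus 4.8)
The plan is to deduce both implications from Theorem~\ref{Theorem36}, applied to the set $C := \mathrm{epi} f \subset \mathbb{R}^n \times \mathbb{R}$ and the index set $I = \{1, \ldots, n\} \subset \{1, \ldots, n, n+1\}$. This is legitimate because $C$ is closed (as $f$ is lower semi-continuous) and $\pi(C) = \mathbb{R}^n$ is unbounded. The dictionary I would use repeatedly is that, for this $I$, the set $\B_{R,I}$ constrains only the first $n$ coordinates, so a point $(x,y)$ lies in $\mathrm{epi} f \setminus \B_{R,I}$ exactly when $y \ge f(x)$ and $\|x\| > R$; in particular $f(x) < +\infty$ for any such $x$.

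For the implication $(v,r) \in \mathrm{int}\, T_{\mathrm{epi} f}(\infty_I) \Rightarrow f^{\dagger}(\infty; v) < +\infty$, I would invoke the necessity part of Theorem~\ref{Theorem36} to obtain $\epsilon, R, \lambda > 0$ such that $(x,y) + t(v',r') \in \mathrm{epi} f$ for all $(x,y) \in \mathrm{epi} f$ with $\|x\| > R$, all $t \in [0,\lambda]$, and all $(v',r') \in \B_\epsilon((v,r))$. Specializing to $y = f(x)$ and $(v',r') = (w,r)$ with $\|w - v\| \le \epsilon$ then gives $f(x+tw) \le f(x) + tr$, hence $\frac{f(x+tw)-f(x)}{t} \le r$, whenever $f(x) < +\infty$, $\|x\| > R$, and $t \in (0,\lambda]$. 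Since the points $x$ with $f(x) = +\infty$ do not raise the limsup defining $f^{\dagger}(\infty;v)$, this yields $f^{\dagger}(\infty; v) \le r < +\infty$.

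For the converse, assuming $f^{\dagger}(\infty; v) < +\infty$, I would fix a real number $r$ with $r > f^{\dagger}(\infty; v)$ (any real $r$ if that value is $-\infty$) and then $\epsilon > 0$ with $r - \epsilon > f^{\dagger}(\infty; v)$. By the definition of the limsup there exist $R, \lambda > 0$ and $\epsilon_0 \in (0, \epsilon]$ such that $\frac{f(x+tw)-f(x)}{t} < r - \epsilon$ --- which in particular forces $f(x+tw) < +\infty$ --- whenever $\|x\| > R$, $0 < t < \lambda$, and $\|w - v\| \le \epsilon_0$. I would then verify condition~\eqref{PT31} of Theorem~\ref{Theorem36} at the point $(v,r)$ with the data $(\epsilon_0, R, \lambda/2)$: given $(x,y) \in \mathrm{epi} f$ with $\|x\| > R$, $t \in [0, \lambda/2]$, and $(v',r') \in \B_{\epsilon_0}((v,r))$, the case $t = 0$ is immediate, while for $t > 0$ we have $f(x) \le y < +\infty$, $\|v' - v\| \le \epsilon_0$, and $r' \ge r - \epsilon_0 \ge r - \epsilon$, so that $f(x + tv') < f(x) + t(r-\epsilon) \le y + tr'$, i.e., $(x,y) + t(v',r') \in \mathrm{epi} f$. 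The sufficiency part of Theorem~\ref{Theorem36} then delivers $(v,r) \in \mathrm{int}\, T_{\mathrm{epi} f}(\infty_I)$.

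The hard part will be this converse direction, where the target is an \emph{interior} point of $T_{\mathrm{epi} f}(\infty_I)$ rather than merely a boundary point; this is exactly why $r$ must be chosen strictly above $f^{\dagger}(\infty; v)$, the resulting slack $\epsilon$ being precisely what keeps the epigraph inclusion alive for every $(v',r')$ in a full ball around $(v,r)$ --- including those with $r' < r$ --- so that \eqref{PT31} holds with an honest $\epsilon$-ball. A secondary point requiring care is the bookkeeping between the $(n+1)$-dimensional epigraph picture (in which $\B_{R,I}$ restricts only the first $n$ coordinates) and the difference-quotient picture, together with the harmless role of the points where $f = +\infty$ under the conventions on $\pm\infty$.
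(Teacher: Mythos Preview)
Your proposal is correct and follows essentially the same route as the paper's proof: both directions are obtained from Theorem~\ref{Theorem36} applied to $C=\mathrm{epi}\,f$, with the forward direction specializing the Theorem's conclusion to $y=f(x)$ to bound the difference quotient, and the converse choosing $r$ strictly above $f^{\dagger}(\infty;v)$ so that the resulting slack accommodates a full ball $\B_{\epsilon}((v,r))$ in condition~\eqref{PT31}. The only cosmetic difference is that the paper picks $r:=2L$ and $\epsilon:=\min\{\epsilon_1,L/2\}$ for some $L\ge f^{\dagger}(\infty;v)$, whereas you parameterize directly via $r-\epsilon>f^{\dagger}(\infty;v)$; your bookkeeping is arguably a bit cleaner.
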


\begin{proof}
{\em Sufficiency.} Let $(v, r) \in {\rm int}T_{\mathrm{epi} f} (\infty_I).$ Since $f$ is lower semi-continuous, ${\mathrm{epi} f}$ is a closed set in $\mathbb{R}^n \times \mathbb{R}.$ In light of Theorem~\ref{Theorem36}, there exist constants $R > 0$ and $\epsilon > 0$ such that 
\begin{eqnarray*}
({x}, {y}) + t(v', r') \in \mathrm{epi} f  \ \textrm{ for all } \ ({x}, {y}) \in \mathrm{epi} f \setminus \B_{R, I}, \ (v', r') \in \B_{\epsilon}(v, r), \ t \in (0,\epsilon). 
\end{eqnarray*}
Letting ${y} := f({x})$ and $r' := r,$ we get $f({x} + tv') \le f({x}) + t r,$ and so
\begin{eqnarray*}
\dfrac{f({x} + tv') - f({x})}{t} &\le& r.
\end{eqnarray*}
It follows that $f$ is directionally Lipschitz at infinity with respect to $v.$

{\em Necessity.} Assume that $f$ is directionally Lipschitz at infinity with respect to $v,$ i.e., $f^{\dagger}(\infty; v) < +\infty.$ Then we can find a real number $L > 0$ satisfying
\begin{eqnarray*}
\limsup_{x \xrightarrow{\mathrm{dom} f} \infty, \ t \searrow 0, \ w \to v}  \frac{f(x + t w) - f(x)}{t} &<& L.
\end{eqnarray*}
Consequently, there are constants $R > 0$ and $\epsilon_1 > 0$ such that for all ${x} \in \mathrm{dom} f \setminus \B_R$, $v'\in \B_{\epsilon_1}(v)$ and $t \in (0,\epsilon_1),$ we have
\begin{eqnarray*}
\dfrac{f({x} + tv') - f({x})}{t} &\le& L,
\end{eqnarray*}
and hence
\begin{eqnarray*}
f({x} + tv') &\le& f({x}) + t L \ \leq \ {y} + t r',
\end{eqnarray*}
where ${y} \ge f({x})$ and $r' \ge L.$ Letting $r := 2 L$ and $\epsilon := \min \{\epsilon_1, \frac{L}{2} \},$ we obtain
\begin{eqnarray*}
({x}, {y}) + t(v', r') \in \mathrm{epi} f  \ \textrm{ for all } \  ({x}, {y}) \in \mathrm{epi} f \setminus \B_{R, I}, \ (v', r') \in \B_{\epsilon}(v, r), \ t \in (0,\epsilon),
\end{eqnarray*}
which yields $(v, r) \in {\rm int}T_{\mathrm{epi} f} (\infty_I)$ by Theorem~\ref{Theorem36}.
\end{proof}

\begin{example}{\rm
Let $n := 2$ and consider the function $f \colon \mathbb{R}^2 \to \mathbb{R}, (x_1, x_2) \mapsto e^{x_1} + x_2.$ A simple calculation shows that
\begin{eqnarray*}
T_{\mathrm{epi} f}(\infty_{I}) &=& \{(v_1, v_2, r) \in \mathbb{R}^2 \times \mathbb{R} \ | \ v_1 \le 0, v_2 \le r\}.
\end{eqnarray*}
In view of Proposition~\ref{Proposition49}, $f$ is directionally Lipschitz at infinity with respect to $(v_1, v_2) \in \mathbb{R}^2$ if and only if $v_1 < 0.$
}\end{example}

To simplify notation in what follows, let
\begin{eqnarray*}
D^{\uparrow}_{f}(\infty) &:=& \{ v \in \mathbb{R}^n \mid f^{\uparrow}(\infty; v) < +\infty\}, \\
D^{\dagger}_{f}(\infty) &:=& \{ v \in \mathbb{R}^n \mid f^{\dagger}(\infty; v) < +\infty\}.
\end{eqnarray*}

\begin{corollary} \label{Corollary411}
Let $f$ be lower semi-continuous. Then for every $v \in \mathrm{int} D^{\uparrow}_{f}(\infty),$ the function $f$ is directionally Lipschitz at infinity with respect to $v.$
\end{corollary}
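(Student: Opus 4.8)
The plan is to push the statement down to the level of the upper subderivative $g := f^{\uparrow}(\infty;\cdot)$ and then close by Proposition~\ref{Proposition49}. By Proposition~\ref{Proposition45} the cone $T_{\mathrm{epi} f}(\infty_I)$ is precisely $\mathrm{epi}\, g$; by Corollary~\ref{Corollary46} the function $g$ is convex, positively homogeneous and lower semi-continuous; and by definition $D^{\uparrow}_{f}(\infty)$ is the effective domain of $g$. In view of Proposition~\ref{Proposition49}, it therefore suffices to prove the convex-analytic claim that \emph{if $v \in \mathrm{int}\, D^{\uparrow}_{f}(\infty)$, then $(v, r) \in \mathrm{int}\, T_{\mathrm{epi} f}(\infty_I)$ for some real $r$}: once this is known, Proposition~\ref{Proposition49} at once yields that $f$ is directionally Lipschitz at infinity with respect to $v$.

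To prove the claim I would split into two cases according to whether $g(0) = -\infty$ or not (by Corollary~\ref{Corollary47} these are exactly the cases $\partial f(\infty) = \emptyset$ and $\partial f(\infty) \ne \emptyset$). First suppose $g(0) > -\infty$. Then $g > -\infty$ everywhere: if $g(v_0) = -\infty$ then positive homogeneity gives $g(\lambda v_0) = \lambda\, g(v_0) = -\infty$ for $\lambda > 0$, and lower semi-continuity forces $g(0) \le \liminf_{\lambda \searrow 0} g(\lambda v_0) = -\infty$, a contradiction. Since $v$ lies in the interior of $\mathrm{dom}\, g$, the function $g$ is finite on a neighbourhood of $v$, hence (being convex and finite on an open convex set) continuous at $v$; choosing $\delta > 0$ with $g < g(v) + 1$ on $\mathbb{B}_\delta(v)$, the set $\mathbb{B}_\delta(v) \times (g(v)+1, +\infty)$ lies in $\mathrm{epi}\, g$ and has $(v, g(v)+2)$ in its interior, so $r := g(v) + 2$ works.

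Now suppose $g(0) = -\infty$. I claim $g \equiv -\infty$ on $\mathrm{dom}\, g$: otherwise pick $v_0 \in \mathrm{dom}\, g$ with $g(v_0)$ finite; since $(0, -k)$ and $(v_0, g(v_0)+1)$ both lie in $\mathrm{epi}\, g$ for every $k$, convexity gives $g\bigl(\tfrac12 v_0\bigr) \le \tfrac12\bigl(g(v_0) + 1 - k\bigr)$, whence $g\bigl(\tfrac12 v_0\bigr) = -\infty$ and, by positive homogeneity, $g(v_0) = 2\,g\bigl(\tfrac12 v_0\bigr) = -\infty$, contradicting the choice of $v_0$. Therefore $\mathrm{epi}\, g = D^{\uparrow}_{f}(\infty) \times \mathbb{R}$, so $\mathrm{int}\,\mathrm{epi}\, g = \mathrm{int}\, D^{\uparrow}_{f}(\infty) \times \mathbb{R}$, and for $v \in \mathrm{int}\, D^{\uparrow}_{f}(\infty)$ any $r$, say $r := 0$, works. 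In either case we have produced $r$ with $(v, r) \in \mathrm{int}\, T_{\mathrm{epi} f}(\infty_I)$, which finishes the proof through Proposition~\ref{Proposition49}. The only subtle point — and the main, though mild, obstacle — is the possibility that $g$ attains the value $-\infty$; once this is isolated into the second case the rest is routine.
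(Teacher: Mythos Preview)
Your proof is correct and follows essentially the same route as the paper: both reduce the statement to showing that $(v,r)\in\mathrm{int}\,T_{\mathrm{epi} f}(\infty_I)=\mathrm{int}\,\mathrm{epi}\,f^{\uparrow}(\infty;\cdot)$ for some real $r$, and then invoke Proposition~\ref{Proposition49}. The only difference is that the paper obtains this convex-analytic fact by citing \cite[Theorem~6.8]{Rockafellar1970}, whereas you supply a self-contained argument via the case split on whether $f^{\uparrow}(\infty;0)=-\infty$.
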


\begin{proof}
The set $D^{\uparrow}_{f}(\infty)$ is a convex cone as the function $f^{\uparrow}(\infty; \cdot)$ is convex and positively homogeneous (by Corollary~\ref{Corollary46}). Take arbitrary $v \in \mathrm{int} D^{\uparrow}_{f}(\infty)$ and let $r > f^{\uparrow}(\infty; v).$ In light of \cite[Theorem~6.8]{Rockafellar1970}, $(v, r) \in \mathrm{int}\, \mathrm{epi} f^{\uparrow}(\infty; \cdot).$ This, together with Propositions~\ref{Proposition45} and \ref{Proposition49}, implies that $f$ is directionally Lipschitz at infinity with respect to the vector $v.$
\end{proof}

One of the situations where a considerable simplification is possible in analyzing subderivatives at infinity is the case where there exists at least one $v \in \mathbb{R}^n$ such that $f^{\dagger}(\infty; v) < +\infty,$  i.e., when the set $D^{\dagger}_{f}(\infty)$ is nonempty.

\begin{proposition} \label{Proposition412}
Let $f$ be lower semi-continuous such that $D^{\dagger}_{f}(\infty) \neq \emptyset.$ Then
\begin{eqnarray*}
D^{\dagger}_{f}(\infty) &=& {\rm int} D^{\uparrow}_{f}(\infty).
\end{eqnarray*}
Furthermore, $f^{\uparrow}(\infty; \cdot)$ is continuous at each $v \in 	D^{\dagger}_{f}(\infty) $ and agrees there with $f^{\dagger}(\infty; \cdot)$. 
\end{proposition}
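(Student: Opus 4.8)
The plan is to transfer everything to the sublinear function $g := f^{\uparrow}(\infty; \cdot),$ which by Corollary~\ref{Corollary46} is convex, positively homogeneous and lower semi-continuous, which by Proposition~\ref{Proposition45} has epigraph $T_{\mathrm{epi} f}(\infty_I),$ and whose effective domain is $D^{\uparrow}_{f}(\infty).$ Throughout I will use that $\mathrm{epi} f$ is closed ($f$ being lower semi-continuous), so Theorem~\ref{Theorem36} applies to $C := \mathrm{epi} f$ with $I = \{1, \ldots, n\};$ this theorem is the mechanism converting interiority of $T_{\mathrm{epi} f}(\infty_I)$ into uniform estimates on $f.$

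\textbf{Step 1.} I first prove the set equality $D^{\dagger}_{f}(\infty) = \mathrm{int}\, D^{\uparrow}_{f}(\infty).$ For the inclusion $D^{\dagger}_{f}(\infty) \subseteq \mathrm{int}\, D^{\uparrow}_{f}(\infty),$ take $v \in D^{\dagger}_{f}(\infty);$ by Proposition~\ref{Proposition49} there is a real $r$ with $(v, r) \in \mathrm{int}\, T_{\mathrm{epi} f}(\infty_I) = \mathrm{int}\, \mathrm{epi} g,$ and a full neighborhood of $(v,r)$ lying in $\mathrm{epi} g$ forces a neighborhood of $v$ into the effective domain of $g,$ i.e.\ $v \in \mathrm{int}\, D^{\uparrow}_{f}(\infty).$ The reverse inclusion $\mathrm{int}\, D^{\uparrow}_{f}(\infty) \subseteq D^{\dagger}_{f}(\infty)$ is exactly Corollary~\ref{Corollary411}. (The hypothesis $D^{\dagger}_{f}(\infty) \neq \emptyset$ enters only here, guaranteeing that this common set is nonempty, equivalently $\mathrm{int}\, D^{\uparrow}_{f}(\infty) \neq \emptyset.$)

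\textbf{Step 2.} I next prove $f^{\uparrow}(\infty; v) = f^{\dagger}(\infty; v)$ for every $v \in D^{\dagger}_{f}(\infty).$ The inequality $f^{\uparrow}(\infty; v) \le f^{\dagger}(\infty; v)$ holds for all $v$: putting $w = v$ in the inner infimum gives $\inf_{w \in \B_{\epsilon}(v)} \frac{f(x+tw)-f(x)}{t} \le \frac{f(x+tv)-f(x)}{t},$ and taking $\limsup$ as $x \to \infty,$ $t \searrow 0,$ then $\epsilon \searrow 0,$ while noting that the constant choice $w \equiv v$ is admissible in the $\limsup$ over $w \to v,$ one obtains $f^{\uparrow}(\infty; v) \le \limsup_{x \to \infty,\, t \searrow 0} \frac{f(x+tv)-f(x)}{t} \le f^{\dagger}(\infty; v).$ For the reverse inequality, fix $v \in D^{\dagger}_{f}(\infty) = \mathrm{int}\, D^{\uparrow}_{f}(\infty)$ and any real $r > g(v).$ Because $v$ lies in the interior of the effective domain of the convex function $g$ and $r > g(v),$ the point $(v, r)$ is interior to $\mathrm{epi} g = T_{\mathrm{epi} f}(\infty_I)$ (also in the improper case $g \equiv -\infty,$ under the convention $r > -\infty$). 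Applying Theorem~\ref{Theorem36} to $C = \mathrm{epi} f$ at $(v, r)$ yields constants $\epsilon, R, \lambda > 0$ such that $(x, f(x)) + t(v', r) \in \mathrm{epi} f$ whenever $\|x\| > R,$ $t \in (0, \lambda]$ and $v' \in \B_{\epsilon}(v),$ i.e.\ $\frac{f(x+tv')-f(x)}{t} \le r$ on this range; hence $f^{\dagger}(\infty; v) \le r.$ Letting $r \downarrow g(v)$ gives $f^{\dagger}(\infty; v) \le g(v) = f^{\uparrow}(\infty; v),$ so equality holds.

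\textbf{Step 3.} Since $g = f^{\uparrow}(\infty; \cdot)$ is convex (Corollary~\ref{Corollary46}) and, by Step~1, $D^{\dagger}_{f}(\infty)$ is precisely the interior of its effective domain, $g$ is continuous on $D^{\dagger}_{f}(\infty)$ by the classical continuity of convex functions on the interior of their effective domain (and trivially so when $g$ is improper there). The step I expect to require the most care is Step~2: unpacking the geometric membership $(v,r) \in \mathrm{int}\, T_{\mathrm{epi} f}(\infty_I)$ via Theorem~\ref{Theorem36} into the difference-quotient bound defining $f^{\dagger}(\infty; v)$ --- in particular, matching the set $\B_{R,I}$ attached to $\mathrm{epi} f$ (which equals $\{\|x\| \le R\}$ here, since $I = \{1, \ldots, n\}$) with the regime $x \to \infty,$ and keeping track of the extended-real conventions when $g$ fails to be proper.
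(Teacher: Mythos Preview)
Your proof is correct and follows essentially the same route as the paper's. Both arguments pivot on Proposition~\ref{Proposition45} (identifying $\mathrm{epi}\, f^{\uparrow}(\infty;\cdot)$ with $T_{\mathrm{epi} f}(\infty_I)$), Proposition~\ref{Proposition49} (identifying $D^{\dagger}_f(\infty)$ with the projection of $\mathrm{int}\,T_{\mathrm{epi} f}(\infty_I)$), and the convexity of $f^{\uparrow}(\infty;\cdot)$ for the continuity assertion. The only organizational differences are that you invoke Corollary~\ref{Corollary411} for the inclusion $\mathrm{int}\,D^{\uparrow}_f(\infty)\subseteq D^{\dagger}_f(\infty)$ whereas the paper re-derives it, and that you establish the equality $f^{\uparrow}(\infty;v)=f^{\dagger}(\infty;v)$ by two explicit analytic inequalities (reapplying Theorem~\ref{Theorem36} for the nontrivial one) while the paper phrases the same step via the infimum formulas $f^{\uparrow}(\infty;v)=\inf\{r:(v,r)\in T_{\mathrm{epi} f}(\infty_I)\}$ and $f^{\dagger}(\infty;v)=\inf\{r:(v,r)\in \mathrm{int}\,T_{\mathrm{epi} f}(\infty_I)\}$, which coincide because a closed convex set with nonempty interior equals the closure of its interior.
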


\begin{proof}
By Proposition~\ref{Proposition49}, we have
\begin{eqnarray*}
D^{\dagger}_{f}(\infty) &=& \pi({\rm int}T_{\mathrm{epi} f} (\infty_I)).
\end{eqnarray*}
(Recall that $\pi \colon \mathbb{R}^n \times \mathbb{R} \to \mathbb{R}^n, (v, r) \mapsto v,$ denotes the projection on the first component.) Hence, in order to prove the first assertion of the theorem, it suffices to establish
\begin{eqnarray}\label{3.8}
\pi(\mathrm{int}T_{\mathrm{epi} f} (\infty_I)) &=& {\rm int}\{ v \in \mathbb{R}^n \mid f^{\uparrow}(\infty; v) < +\infty \}.
\end{eqnarray}

Let $v \in \pi(\mathrm{int}T_{\mathrm{epi} f} (\infty_I)).$ Then $(v, r) \in \mathrm{int}T_{\mathrm{epi} f} (\infty_I)$ for some $r \in \mathbb{R}.$ In view of Proposition~\ref{Proposition45}, $(v, r) \in \mathrm{int}\, \mathrm{epi} f^{\uparrow}(\infty; \cdot).$ Consequently, $f^{\uparrow}(\infty; w) < +\infty$ for all $w$ near $v,$ and so $v$ lies in the right-hand side of \eqref{3.8}.

Conversely, let $v$ be in the right-hand side of~\eqref{3.8}. Then, there are real numbers $\epsilon > 0$ and $r$ such that $f^{\uparrow}(\infty; v') < r - 1$ for all $v' \in \B_{\epsilon }(v).$ It follows that $(v', r') \in {\rm epi} f^{\uparrow}(\infty; \cdot) = T_{\mathrm{epi} f}(\infty_I)$ for all $(v', r')$ close to $(v, r),$ equivalently, $(v, r) \in \mathrm{int} T_{\mathrm{epi} f}(\infty_I),$ and so $v \in \pi({\rm int}T_{{\rm epi} f}(\infty_{I})).$ Therefore, the equation~\eqref{3.8} holds, which proves the first assertion of the theorem.

Since the function $f^{\uparrow}(\infty; \cdot)$ is convex, it is continuous on the interior of the set $D^{\uparrow}_f(\infty),$ provided it is bounded above in a neighborhood of one point (see \cite[Theorem~10.1]{Rockafellar1970}); this is the case precisely because $D^{\dagger}_{f}(\infty) \neq \emptyset.$ The second assertion of the theorem is proved.

The last assertion of the theorem follows directly from the following expressions:
\begin{eqnarray*}
f^{\uparrow}(\infty; v) &=&{\rm inf }\{ r \in \mathbb{R} \mid (v, r) \in T_{\mathrm{epi} f} (\infty_I) \},\\
f^{\dagger}(\infty; v) &=&{\rm inf }\{ r \in \mathbb{R} \mid (v, r) \in {\rm int}T_{\mathrm{epi} f} (\infty_I) \},
\end{eqnarray*}
where $v$ belongs to $D^{\dagger}_f(\infty).$ 
\end{proof}

We next give a formula for the subgradients at infinity of a sum of two functions.
\begin{proposition}[sum rule] \label{Proposition413} 
Let $f_1$ and $f_2$ be lower semi-continuous such that $D^{\uparrow}_{f_1}(\infty) \cap \mathrm{int} D^{\uparrow}_{f_2}(\infty) \ne \emptyset.$ Then
\begin{eqnarray}
(f_1 + f_2)^{\uparrow}(\infty; v) &\le& f_1^{\uparrow}(\infty; v) + f_2^{\uparrow}(\infty; v) \quad \textrm{ for all } \quad v \in \mathbb{R}^n, \label{3.22} \\
\partial (f_1 + f_2)(\infty) &\subset& \partial f_1(\infty) +\partial f_2(\infty).  \label{sumrule}
\end{eqnarray}
\end{proposition}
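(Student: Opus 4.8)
The plan is to reduce the sum rule to a statement about Clarke tangent cones at infinity and then to the subderivative characterizations established in Corollary~\ref{Corollary47} and Proposition~\ref{Proposition412}. First I would prove the subadditivity inequality \eqref{3.22}. Fix $v \in \mathbb{R}^n$. If either $f_1^{\uparrow}(\infty;v)$ or $f_2^{\uparrow}(\infty;v)$ equals $+\infty$ the inequality is trivial (using the convention on $+\infty$ from Section~\ref{Section2}), so assume both are finite; write $r_i := f_i^{\uparrow}(\infty;v)$. Given $\epsilon > 0$, from the definition of $f_i^{\uparrow}(\infty;v)$ one can find, for each $i$, a radius $\epsilon_i \in (0,\epsilon)$ and then along any sequences $x_k \to \infty$, $t_k \searrow 0$ a point $w_k^{(i)} \in \B_{\epsilon_i}(v)$ realising $\frac{f_i(x_k + t_k w_k^{(i)}) - f_i(x_k)}{t_k} \le r_i + \epsilon$ for large $k$. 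The obstacle here is that the two infima over $\B_\epsilon(v)$ are attained at \emph{different} directions $w_k^{(1)} \ne w_k^{(2)}$, so one cannot simply add the two difference quotients. This is exactly where the hypothesis $D^{\uparrow}_{f_1}(\infty) \cap \mathrm{int}\, D^{\uparrow}_{f_2}(\infty) \ne \emptyset$ enters: via Proposition~\ref{Proposition49} and Theorem~\ref{Theorem36}, being in the \emph{interior} of $D^{\uparrow}_{f_2}(\infty)$ gives a \emph{uniform} one-sided Lipschitz-type estimate for $f_2$ near infinity on a whole ball of directions, so that moving the evaluation point of $f_2$ from $x_k + t_k w_k^{(2)}$ to $x_k + t_k w_k^{(1)}$ changes $f_2$ by at most $O(t_k \epsilon)$. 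Combining this with the estimate for $f_1$ at the common direction $w_k^{(1)}$ yields $\frac{(f_1+f_2)(x_k + t_k w_k^{(1)}) - (f_1+f_2)(x_k)}{t_k} \le r_1 + r_2 + O(\epsilon)$, and then letting $k \to \infty$ and $\epsilon \searrow 0$ gives \eqref{3.22}.

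For this to be rigorous I would first reduce to the case $v \in \mathrm{int}\, D^{\uparrow}_{f_2}(\infty)$ by a translation/approximation argument: pick $v_0 \in D^{\uparrow}_{f_1}(\infty) \cap \mathrm{int}\, D^{\uparrow}_{f_2}(\infty)$, note that $D^{\uparrow}_{f_1}(\infty)$ and $D^{\uparrow}_{f_2}(\infty)$ are convex cones (Corollary~\ref{Corollary46}), so that points of the form $v_\tau := (1-\tau)v + \tau v_0$ with $\tau \in (0,1]$ lie in $D^{\uparrow}_{f_1}(\infty) \cap \mathrm{int}\, D^{\uparrow}_{f_2}(\infty)$ whenever $v \in D^{\uparrow}_{f_1}(\infty)$; establish the inequality at each $v_\tau$ using the uniform estimate for $f_2$ from Proposition~\ref{Proposition49}, and then let $\tau \searrow 0$ using lower semi-continuity of $f_1^{\uparrow}(\infty;\cdot)$ and $f_2^{\uparrow}(\infty;\cdot)$ (Corollary~\ref{Corollary46}) together with continuity of $f_2^{\uparrow}(\infty;\cdot)$ on $\mathrm{int}\, D^{\uparrow}_{f_2}(\infty)$ (Proposition~\ref{Proposition412}). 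For $v \notin D^{\uparrow}_{f_1}(\infty)$ the right side is $+\infty$ and there is nothing to prove. This disposes of \eqref{3.22}.

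Finally I would deduce the inclusion \eqref{sumrule} from \eqref{3.22} by convex duality. By Corollary~\ref{Corollary47}, $f_i^{\uparrow}(\infty;\cdot)$ is the support function of $\partial f_i(\infty)$ (when the latter is nonempty), and likewise $(f_1+f_2)^{\uparrow}(\infty;\cdot)$ is the support function of $\partial(f_1+f_2)(\infty)$. If $\partial(f_1+f_2)(\infty) = \emptyset$ there is nothing to prove, so assume it is nonempty; then $(f_1+f_2)^{\uparrow}(\infty;0) = 0 > -\infty$, and \eqref{3.22} forces $f_1^{\uparrow}(\infty;v) + f_2^{\uparrow}(\infty;v) > -\infty$ for all $v$, in particular neither $f_i^{\uparrow}(\infty;0)$ is $-\infty$, so both $\partial f_i(\infty)$ are nonempty by Corollary~\ref{Corollary47}. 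Now \eqref{3.22} reads as an inequality between support functions: $\sigma_{\partial(f_1+f_2)(\infty)} \le \sigma_{\partial f_1(\infty)} + \sigma_{\partial f_2(\infty)} = \sigma_{\partial f_1(\infty) + \partial f_2(\infty)}$. Since $\partial f_1(\infty) + \partial f_2(\infty)$ is a convex set whose closure is determined by its support function, and since $\partial(f_1+f_2)(\infty)$ is closed and convex, the inequality of support functions gives $\partial(f_1+f_2)(\infty) \subset \mathrm{cl}\big(\partial f_1(\infty) + \partial f_2(\infty)\big)$; the uniform directional-Lipschitz estimate for $f_2$ moreover shows $\partial f_2(\infty)$ is bounded in the directions of $\mathrm{int}\, D^{\uparrow}_{f_2}(\infty)$, which together with the interior-intersection hypothesis is enough to guarantee $\partial f_1(\infty) + \partial f_2(\infty)$ is already closed, yielding \eqref{sumrule}. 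I expect the uniform estimate for $f_2$ (the step that legitimises swapping the two near-optimal directions) to be the main obstacle; everything after \eqref{3.22} is routine convex analysis.
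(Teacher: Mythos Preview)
Your approach is essentially the paper's. For \eqref{3.22} the paper does exactly what you outline: use the interior hypothesis (via Proposition~\ref{Proposition412}, i.e.\ $f_2^{\dagger}(\infty;v)=f_2^{\uparrow}(\infty;v)$) to get a \emph{uniform} bound $\tfrac{f_2(x+tw)-f_2(x)}{t}\le r$ for all $w\in\B_\epsilon(v)$, then pass to general $v\in D^{\uparrow}_{f_1}(\infty)\cap D^{\uparrow}_{f_2}(\infty)$ by the same convex-combination limit $v_\tau\to v$ and \cite[Corollary~7.5.1]{Rockafellar1970}. One simplification: you do not need to locate a near-optimal direction $w_k^{(2)}$ for $f_2$ and then ``move'' it to $w_k^{(1)}$; since the bound on $f_2$ holds for \emph{every} $w\in\B_\epsilon(v)$, you can write directly
\[
\inf_{w\in\B_\epsilon(v)}\frac{f_0(x+tw)-f_0(x)}{t}\le\inf_{w\in\B_\epsilon(v)}\frac{f_1(x+tw)-f_1(x)}{t}+r,
\]
which is how the paper proceeds.

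For \eqref{sumrule} the paper takes a slightly cleaner route than your support-function argument: after observing $\partial f_0(\infty)\subset\{\xi:\langle\xi,v\rangle\le(\ell_1+\ell_2)(v)\ \forall v\}=\partial(\ell_1+\ell_2)(0)$, it invokes the Moreau--Rockafellar sum rule \cite[Theorem~23.8]{Rockafellar1970} for convex subdifferentials, using that $\ell_2$ is continuous at a point of $\mathrm{dom}\,\ell_1$. This bypasses the separate closedness verification you sketch for $\partial f_1(\infty)+\partial f_2(\infty)$; your recession-cone argument is correct in principle (the hypothesis forces $(D^{\uparrow}_{f_1}(\infty))^\circ\cap\bigl(-(D^{\uparrow}_{f_2}(\infty))^\circ\bigr)=\{0\}$), but it is exactly the content already packaged inside Rockafellar's theorem, so citing that theorem is more economical.
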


\begin{proof}
For simplicity of notation, we let $f_0 := f_1 + f_2$ and $\ell_i(v) := f_i^{\uparrow}(\infty; v)$ for $i = 0, 1, 2.$
Our assumption implies that $\mathrm{int} D^{\uparrow}_{f_2}(\infty) \ne \emptyset.$ By Corollary~\ref{Corollary411}, the function $f_2$ is directionally Lipschitz at infinity. Take any $v \in D^{\uparrow}_{f_1}(\infty)\cap {\rm int} D^{\uparrow}_{f_2}(\infty)$ and let $r > \ell_2(v) = f_2^{\uparrow}(\infty; v).$ In view of Proposition~\ref{Proposition412}, $f_2^{\dagger}(\infty; v) = f_2^{\uparrow}(\infty; v) < r.$ Hence, there exist $R>0$ and $\epsilon>0$ such that for all $x \in \mathrm{dom} f_2 \setminus \B_R$, $w \in \B_{\epsilon}(v)$ and $t \in (0,\epsilon),$ we have
\begin{eqnarray*}
\dfrac{f_2(x + tw) - f_2(x)}{t} &\le& r,
\end{eqnarray*}
and so
\begin{eqnarray*}
\frac{f_0(x + t w) - f_0(x)}{t} 
&=& \frac{f_1(x + t w) - f_1(x)}{t} +\frac{f_2(x + t w) - f_2(x)}{t} \\
&\le& \frac{f_1(x + t w) - f_1(x)}{t} +r. \nonumber
\end{eqnarray*}
Note that $\mathrm{dom} f_0 \subset \mathrm{dom} f_1 \cap \mathrm{dom} f_2.$ Therefore,
\begin{eqnarray*}
\ell_0(v)  &=& \lim_{\epsilon \searrow 0} \limsup_{x \xrightarrow{\mathrm{dom} f_0} \infty, \ t \searrow 0}  \inf_{w \in \B_\epsilon(v)} \frac{f_0 (x + t w) - f_0(x)}{t} \\
&\le&  \lim_{\epsilon \searrow 0} \limsup_{x \xrightarrow{\mathrm{dom} f_1} \infty, \ t \searrow 0}  \inf_{w \in \B_\epsilon(v)} \left\{\frac{f_1(x + t w) - f_1(x)}{t} + r\right\} \\
&=&  \lim_{\epsilon \searrow 0} \limsup_{x \xrightarrow{\mathrm{dom} f_1} \infty, \ t \searrow 0}  \inf_{w \in \B_\epsilon(v)} \left\{\frac{f_1(x + t w) - f_1(x)}{t}\right\}  + r \\
&=&  \ell_1(v) + r.
\end{eqnarray*}
Since this inequality is true for all $r > \ell_2(v) = f_2^{\uparrow}(\infty; v),$ we derive the inequality~\eqref{3.22}, which we still need to establish for general $v.$
If either $\ell_1(v) = +\infty$ or $\ell_2(v) = +\infty$ then \eqref{3.22} is trivial, so assume that $v$ belongs to the set  $D^{\uparrow}_{f_1}(\infty)\cap D^{\uparrow}_{f_2}(\infty).$ By assumption, there exists a vector $\overline{v}$ in $D^{\uparrow}_{f_1}(\infty)\cap {\rm int} D^{\uparrow}_{f_2}(\infty).$ Since 
the sets $D^{\uparrow}_{f_1}(\infty)$ and $ D^{\uparrow}_{f_2}(\infty)$ are convex, $v_{t} := (1 - t)v + t \overline{v} \in D^{\uparrow}_{f_1}(\infty)\cap {\rm int} D^{\uparrow}_{f_2}(\infty)$ for all $t \in (0, 1).$ By the case already treated, then
\begin{eqnarray*}
\ell_0({v}_{t})  &\le& \ell_1({v}_{t}) + \ell_2({v}_{t}).
\end{eqnarray*}
The functions $\ell_i$ are convex and lower semi-continuous, so $\lim_{t \to 0} \ell_i({v}_{t}) = \ell_i({v})$  (see \cite[Corollary~7.5.1]{Rockafellar1970}). Therefore, the inequality~\eqref{3.22} holds.

We now prove the inclusion~\eqref{sumrule}. There are two cases to be considered: 
\subsubsection*{Case 1: $\ell_1(0) = -\infty$ or $\ell_2(0) = -\infty$} 
Note that $\ell_i(0) = f_i^{\uparrow}(\infty; 0) \le 0$ (by definition). Hence the inequality~\eqref{3.22} implies $\ell_0(0) = -\infty,$ and so $\partial f_0(\infty) = \emptyset$ by Corollary~\ref{Corollary47}. Then the inclusion~\eqref{sumrule} is trivial. 

\subsubsection*{Case 2: $\ell_1(0) \neq -\infty$ and $\ell_2(0) \neq -\infty$} 
Assume that $\partial f_0(\infty) \ne \emptyset$ (otherwise the inclusion~\eqref{sumrule} is trivial). Then $\ell_0(0) > -\infty$ in view of Corollary~\ref{Corollary47}. Moreover, we have 
\begin{eqnarray}
\partial f_0 (\infty) 
& = & \{ \xi \in \mathbb{R}^n \mid \langle \xi, v \rangle \le \ell_0(v) \ \textrm{ for all } \ v \in \mathbb{R}^n \} \nonumber\\
& \subset & \{ \xi \in \mathbb{R}^n \mid \langle \xi, v \rangle \le (\ell_1 + \ell_2)(v) \ \textrm{ for all }  \ v \in \mathbb{R}^n \} \nonumber \\
& = & \partial(\ell_1 + \ell_2) (0), \label{PT26}
\end{eqnarray}
where the last equality is valid because $\ell_1 + \ell_2$ is a convex function satisfying $(\ell_1 + \ell_2)(0) = 0.$ The directionally Lipschitz property at infinity of $f_2$ implies that the function $f_2^{\uparrow}(\infty; \cdot)$ is continuous on the interior of the set $D_{f_2}^{\uparrow}(\infty)$ (see Proposition~\ref{Proposition412}). Our assumption thus provides the existence of a vector $v$ such that $\ell_1(v) < \infty$ and $\ell_2$ is bounded above on a neighborhood of $v.$ In view of \cite[Theorem~23.8]{Rockafellar1970}, we get
\begin{eqnarray*}
\partial(\ell_1 + \ell_2) (0) & = & \partial \ell_1(0) + \partial \ell_2(0) \ = \  \partial f_1(\infty) +\partial f_2(\infty),
\end{eqnarray*}
where the second equality holds true because $\ell_1$ and $\ell_2$ are convex functions satisfying $\ell_1(0) = \ell_2(0) = 0.$ Now 
the desired inclusion~\eqref{sumrule} follows directly from \eqref{PT26}.
\end{proof}

\subsection{Optimality conditions at infinity}

We are now in a position to prove the second of our main theorems, which provides necessary optimality conditions at infinity for optimization problems.

\begin{theorem} \label{Theorem414}
Let $f$ be lower semi-continuous and $C \subset \mathbb{R}^n$ be an unbounded closed set. 
Assume that one of the following conditions holds:
\begin{enumerate}[{\rm (i)}]
\item $D^{\uparrow}_{f}(\infty) \cap \mathrm{int}T_{C}(\infty_{I}) \ne \emptyset.$
\item $\mathrm{int}D^{\uparrow}_{f}(\infty) \cap T_{C}(\infty_{I}) \ne \emptyset.$
\end{enumerate}
If there exists a sequence $x_k \in C$ such that $x_k \to \infty$ and $f(x_k) \to \inf_{x \in C} f(x) > -\infty,$ then the following optimality conditions hold:
\begin{eqnarray*}
0 &\in& \partial f(\infty)+ N_{C}(\infty_{I}), \\
f^{\uparrow}(\infty; v) &\ge& 0 \quad \textrm{ for all } \quad v \in T_C(\infty_I).
\end{eqnarray*}
\end{theorem}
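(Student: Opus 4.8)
The plan is to reduce the constrained problem to an unconstrained one via the indicator function, then combine the Fermat-type theorem at infinity with the sum rule. Set $g := f + \delta_C$, where $\delta_C$ is the indicator function of $C$ introduced in Example~\ref{Example48}. Since $f$ is lower semi-continuous and $C$ is closed, $g$ is lower semi-continuous, and since $g(x) = f(x)$ for $x \in C$ and $g(x) = +\infty$ otherwise, we have $\inf_{x \in \mathbb{R}^n} g(x) = \inf_{x \in C} f(x) =: g_* > -\infty$, so $g$ is bounded below. The given sequence $x_k \in C$ with $x_k \to \infty$ then satisfies $g(x_k) = f(x_k) \to g_*$, so Proposition~\ref{Proposition44} yields $0 \in \partial g(\infty)$; in particular $\partial g(\infty) \ne \emptyset$.

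Next I would apply the sum rule, Proposition~\ref{Proposition413}. By Example~\ref{Example48}, $\delta_C^{\uparrow}(\infty; \cdot)$ equals $0$ on $T_C(\infty_I)$ and $+\infty$ elsewhere, hence $D^{\uparrow}_{\delta_C}(\infty) = T_C(\infty_I)$ and $\partial \delta_C(\infty) = N_C(\infty_I)$. Under hypothesis (i), the qualification condition of Proposition~\ref{Proposition413} holds with $f_1 := f$ and $f_2 := \delta_C$, since $D^{\uparrow}_{f_1}(\infty) \cap \mathrm{int}\, D^{\uparrow}_{f_2}(\infty) = D^{\uparrow}_f(\infty) \cap \mathrm{int}\, T_C(\infty_I) \ne \emptyset$; under hypothesis (ii) it holds with the roles reversed, $f_1 := \delta_C$ and $f_2 := f$, since $D^{\uparrow}_{f_1}(\infty) \cap \mathrm{int}\, D^{\uparrow}_{f_2}(\infty) = T_C(\infty_I) \cap \mathrm{int}\, D^{\uparrow}_f(\infty) \ne \emptyset$. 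In either case the sum rule gives
\[
\partial g(\infty) \ = \ \partial(f + \delta_C)(\infty) \ \subset \ \partial f(\infty) + \partial \delta_C(\infty) \ = \ \partial f(\infty) + N_C(\infty_I),
\]
and combining this with $0 \in \partial g(\infty)$ yields the first optimality condition $0 \in \partial f(\infty) + N_C(\infty_I)$.

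For the second condition, write $0 = \xi + w$ with $\xi \in \partial f(\infty)$ and $w \in N_C(\infty_I)$, so that $-\xi = w \in N_C(\infty_I)$ and, in particular, $\partial f(\infty) \ne \emptyset$. By Corollary~\ref{Corollary47}, $f^{\uparrow}(\infty; v) = \sup\{\langle \eta, v \rangle \mid \eta \in \partial f(\infty)\} \ge \langle \xi, v \rangle$ for every $v \in \mathbb{R}^n$. If moreover $v \in T_C(\infty_I)$, then the definition of $N_C(\infty_I)$ gives $\langle v, -\xi \rangle \le 0$, i.e. $\langle \xi, v \rangle \ge 0$, whence $f^{\uparrow}(\infty; v) \ge 0$, as required.

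I expect the only delicate points to be bookkeeping ones: matching the interior/non-interior asymmetry in the qualification hypothesis of Proposition~\ref{Proposition413} to the two alternatives (i) and (ii) by the correct choice of which of $f$ and $\delta_C$ is $f_1$, and checking that $g$ is genuinely bounded below with the approximating sequence for $f$ on $C$ serving as an approximating sequence for $g$ on all of $\mathbb{R}^n$. Beyond that, the argument is a direct assembly of Proposition~\ref{Proposition44}, Proposition~\ref{Proposition413}, Corollary~\ref{Corollary47}, and Example~\ref{Example48}.
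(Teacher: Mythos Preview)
Your proof is correct and follows essentially the same route as the paper: reduce to the unconstrained function $f+\delta_C$, apply Proposition~\ref{Proposition44} to get $0\in\partial(f+\delta_C)(\infty)$, identify $D^{\uparrow}_{\delta_C}(\infty)=T_C(\infty_I)$ and $\partial\delta_C(\infty)=N_C(\infty_I)$ via Example~\ref{Example48}, and invoke the sum rule Proposition~\ref{Proposition413} (swapping the roles of $f$ and $\delta_C$ to match hypotheses (i) and (ii)). The only difference is in deriving the second optimality condition: the paper uses the subderivative inequality \eqref{3.22} from Proposition~\ref{Proposition413} together with Corollary~\ref{Corollary47} applied to $f+\delta_C$, whereas you deduce it directly from the first condition via the support-function formula in Corollary~\ref{Corollary47} and the polarity between $T_C(\infty_I)$ and $N_C(\infty_I)$; both arguments are valid and of comparable length.
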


\begin{proof} 
Recall that $\delta_{C} \colon \mathbb{R}^n \to \overline{\mathbb{R}}$ denotes the indicator function of the set $C,$ which is 
lower semi-continuous (because $C$ is closed). By definition, then
\begin{eqnarray*}
\lim_{k \to \infty} (f + \delta_C)(x_k) &=& \inf_{x \in \mathbb{R}^n} \left(f + \delta_{C}\right)(x).
\end{eqnarray*}
Hence $0 \in \partial \left(f + \delta_{C}\right) (\infty)$ in light of Proposition~\ref{Proposition44}. Furthermore, we know from Example~\ref{Example48} that
$$D^{\uparrow}_{\delta_C}(\infty) = T_{C}(\infty_{I}) \quad \textrm{ and } \quad  \partial \delta_{C} (\infty) = N_{C}(\infty_{I}).$$
Therefore, either (i) or (ii) is sufficient for Proposition~\ref{Proposition413} to be applicable and yield the inclusion
\begin{eqnarray*}
0 &\in& \partial \left(f + \delta_{C}\right) (\infty) \ \subset \ \partial f (\infty) + \partial \delta_{C} (\infty) \ = \ \partial f (\infty) +  N_{C}(\infty_{I}),
\end{eqnarray*}
which gives the first statement. Moreover, by Corollary~\ref{Corollary47}, we have
\begin{eqnarray*}
(f + \delta_C)^{\uparrow}(\infty; v) &\ge& 0 \quad \textrm{ for all } \quad v \in \mathbb{R}^n.
\end{eqnarray*}
It follows from Proposition~\ref{Proposition413} that
\begin{eqnarray*}
f^{\uparrow}(\infty; v)  + \delta_C^{\uparrow}(\infty; v) &\ge& 0 \quad \textrm{ for all } \quad v \in \mathbb{R}^n.
\end{eqnarray*}
Combining this inequality with Example~\ref{Example48}, we obtain the second statement.
\end{proof}

\begin{example}{\rm 
Let $n := 2,$ $f(x_1, x_2) := x_1,$ and $C := \{(x_1, x_2) \in \mathbb{R}^2 \ | \ x_1 \ge 0, x_2 \ge 0, x_1x_2 \ge 1\}.$ Clearly $\inf_{(x_1, x_2) \in C} f(x_1, x_2) = 0$ and $f$ does not attain its infimum on $C.$ A direct calculation shows that $\partial f(\infty) = \{(1, 0)\}$ and $N_C(\infty_I) = (-\infty, 0] \times (-\infty, 0],$ and so the necessary optimality condition at infinity $(0, 0) \in \partial f(\infty) + N_C(\infty_I)$ holds.
}\end{example}

\section{Lipschitz functions at infinity} \label{Section5}

In this section, we will present some characterizations of the Lipschitz continuity at infinity for lower semi-continuous functions.
So let $f \colon \mathbb{R}^n \to {\mathbb{R}}$ be a real valued function. As before, we fix $I := \{1, \ldots, n\} \subset \{1, \ldots, n, n + 1\}$ and consider the projection $\pi \colon \mathbb{R}^n \times \mathbb{R} \to \mathbb{R}^{n}, (x, y) \mapsto x.$

\begin{definition}{\rm
We say that $f$ is {\em Lipschitz at infinity} if there exist constants $L > 0$ and $R > 0$ such that 
\begin{eqnarray*}
|f(x) - f(x')| &\le& L \|x - x'\| \quad \textrm{ for all } \quad x, x' \in \mathbb{R}^n \setminus \B_R.
\end{eqnarray*}
}\end{definition}

The {\em generalized Clarke derivative of $f$ at infinity with respect to} $v \in \mathbb{R}^n$ is defined by
\begin{eqnarray*}
f^0(\infty; v) &:=& \limsup_{x \to \infty, \ t \searrow 0} \frac{f(x + t v) - f(x)}{t}.
\end{eqnarray*}
By definition, if $f$ is Lipschitz at infinity, then $f^0(\infty; v)$ is finite for all $v \in \mathbb{R}^n.$ Furthermore, we have the following.

\begin{proposition} \label{Proposition52}
Let $f$ be Lipschitz at infinity. Then the epigraph of $f^0(\infty; \cdot)$ is $T_{\mathrm{epi} f}(\infty_I);$ that is, $(v, r) \in T_{\mathrm{epi} f}(\infty_I)$ if and only if $f^0(\infty; v) \le r.$
\end{proposition}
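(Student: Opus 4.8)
The plan is to show that, for a function $f$ that is Lipschitz at infinity, the subderivative and the generalized Clarke derivative at infinity coincide, i.e.\ $f^{\uparrow}(\infty;v) = f^0(\infty;v)$ for every $v \in \mathbb{R}^n,$ and then invoke Proposition~\ref{Proposition45} to conclude. Indeed, Proposition~\ref{Proposition45} already identifies $T_{\mathrm{epi} f}(\infty_I)$ with the epigraph of $f^{\uparrow}(\infty;\cdot),$ so once we know $f^{\uparrow}(\infty;\cdot) = f^0(\infty;\cdot)$ the statement follows immediately: $(v,r) \in T_{\mathrm{epi} f}(\infty_I)$ iff $f^{\uparrow}(\infty;v)\le r$ iff $f^0(\infty;v)\le r.$ Note that since $f$ is Lipschitz at infinity, $f^0(\infty;v)$ is finite for every $v,$ so there are no issues with infinite values in what follows.

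For the inequality $f^{\uparrow}(\infty;v) \le f^0(\infty;v),$ I would argue directly from the definitions. Let $L>0$ and $R>0$ be Lipschitz constants for $f$ outside $\B_R.$ For any $\epsilon>0$ and any $w \in \B_\epsilon(v),$ once $x$ is large and $t$ small we have $x,\, x+tw,\, x+tv$ all outside $\B_R,$ so
\begin{eqnarray*}
\frac{f(x+tw)-f(x)}{t} &\le& \frac{f(x+tv)-f(x)}{t} + \frac{|f(x+tw)-f(x+tv)|}{t} \ \le \ \frac{f(x+tv)-f(x)}{t} + L\epsilon.
\end{eqnarray*}
Taking the infimum over $w \in \B_\epsilon(v)$ on the left, then $\limsup_{x\to\infty,\,t\searrow 0},$ then letting $\epsilon\searrow 0,$ yields $f^{\uparrow}(\infty;v) \le f^0(\infty;v).$ The reverse inequality $f^0(\infty;v) \le f^{\uparrow}(\infty;v)$ is in fact immediate from the definitions without any Lipschitz hypothesis, since for each fixed $\epsilon>0$ one has $\inf_{w\in\B_\epsilon(v)}\frac{f(x+tw)-f(x)}{t} \le \frac{f(x+tv)-f(x)}{t}$ (take $w=v$), hence $\limsup_{x\to\infty,\,t\searrow 0}\inf_{w\in\B_\epsilon(v)}\frac{f(x+tw)-f(x)}{t} \le f^0(\infty;v),$ and letting $\epsilon\searrow 0$ preserves this (the left-hand side is monotone in $\epsilon$).

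I do not anticipate a serious obstacle here; the only point requiring a little care is the bookkeeping on the regime ``$x$ large, $t$ small'' so that all the relevant points $x, x+tw, x+tv$ lie outside $\B_R$ where the Lipschitz estimate is valid — this is where $w \in \B_\epsilon(v)$ with $\epsilon$ bounded (say $\epsilon \le 1$) and $t$ small keeps $x+tw$ within a controlled distance of $x,$ so that $\|x\| > R + \|v\| + 1$ and $t \le 1$ suffice. An alternative, perhaps cleaner, route is to observe that $f^0(\infty;\cdot)$ is automatically upper semicontinuous in $v$ when $f$ is Lipschitz at infinity — this is the reason the ``$\inf$ over $\B_\epsilon(v)$'' and ``$\lim_{\epsilon\searrow 0}$'' operations in the definition of $f^{\uparrow}$ do not change the value — and then quote Proposition~\ref{Proposition45}. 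I would present the direct two-inequality argument above, as it is self-contained and short.
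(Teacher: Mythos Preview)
Your overall strategy --- show $f^{\uparrow}(\infty;\cdot) = f^0(\infty;\cdot)$ and then invoke Proposition~\ref{Proposition45} --- is sound and differs from the paper's proof, which argues both implications directly from the definition of $T_{\mathrm{epi} f}(\infty_I)$ without passing through $f^{\uparrow}$. Your route is arguably cleaner, since it reuses Proposition~\ref{Proposition45} rather than essentially repeating its construction.

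However, you have the two inequalities swapped. The direction $f^{\uparrow}(\infty;v) \le f^0(\infty;v)$ is the trivial one: it follows immediately from $\inf_{w \in \B_\epsilon(v)} \frac{f(x+tw)-f(x)}{t} \le \frac{f(x+tv)-f(x)}{t}$ (just take $w=v$) and needs no Lipschitz hypothesis. This is exactly the chain of inequalities in your \emph{second} argument, which you mislabel as proving $f^0 \le f^{\uparrow}$. The nontrivial direction is $f^0(\infty;v) \le f^{\uparrow}(\infty;v)$, and it genuinely requires the Lipschitz property; without it the infimum over $w\in\B_\epsilon(v)$ can be much smaller than the value at $w=v$ for every $\epsilon>0$, so the inequality can fail. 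Your Lipschitz computation is pointed the wrong way: from $|f(x+tw)-f(x+tv)| \le Lt\|w-v\|\le Lt\epsilon$ you should extract
\[
\frac{f(x+tw)-f(x)}{t} \ \ge \ \frac{f(x+tv)-f(x)}{t} - L\epsilon \qquad \text{for all } w \in \B_\epsilon(v),
\]
hence $\inf_{w\in\B_\epsilon(v)} \frac{f(x+tw)-f(x)}{t} \ge \frac{f(x+tv)-f(x)}{t} - L\epsilon$; taking $\limsup_{x\to\infty,\,t\searrow 0}$ and then $\epsilon\searrow 0$ yields $f^{\uparrow}(\infty;v) \ge f^0(\infty;v)$. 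With this correction your proof goes through. (This is precisely where the paper's direct argument uses the Lipschitz bound as well: to compare $f(x_k+t_kv)$ with $f(x_k+t_kv_k)$ along the sequence produced by the tangent cone.)
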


\begin{proof}
Let $(v, r) \in T_{\mathrm{epi} f}(\infty_I).$ By the definition of $f^0(\infty; v),$ there exist sequences $x_k \to \infty$ and $t_k \searrow 0$ such that 
\begin{eqnarray*}
f^0(\infty; v) &=& \lim_{k \to \infty} \frac{f(x_k + t_k v) - f(x_k)}{t_k}.
\end{eqnarray*}
By the definition of $T_{\mathrm{epi} f}(\infty_I),$ there exists a sequence $(v_k, r_k) \in \mathbb{R}^n \times \mathbb{R}$ tending to $(v, r)$ such that $(x_k, f(x_k)) + t_k(v_k, r_k) \in \mathrm{epi} f$ for all $k.$ Hence
\begin{eqnarray*}
f(x_k + t_k v_k) &\le& f(x_k) + t_k r_k.
\end{eqnarray*}
Equivalently,
\begin{eqnarray*}
\frac{f(x_k + t_k v_k) - f(x_k)}{t_k} &\le& r_k.
\end{eqnarray*}
Consequently,
\begin{eqnarray*}
\frac{f(x_k + t_k v) - f(x_k)}{t_k} &\le& 
\frac{f(x_k + t_k v_k) - f(x_k)}{t_k}  + L\|v_k - v\| \ \le \ r_k + L\|v_k - v\| .
\end{eqnarray*}
Letting $k \to \infty,$ we get $f^0(\infty; v) \le r.$

Conversely, take any $v \in \mathbb{R}^n$ and $\delta \ge 0.$ We will show that $(v, f^0(\infty; v) + \delta) \in T_{\mathrm{epi} f}(\infty_I).$
To see this, let $(x_k, y_k)$ be a sequence in $\mathrm{epi} f$ with $x_k \to \infty$ and let $t_k \searrow 0.$ Define
\begin{eqnarray*}
r_k &:=& \max \left\{f^0(\infty; v) + \delta, \frac{f(x_k + t_k v) - f(x_k)}{t_k} \right\}.
\end{eqnarray*}
Observe that $r_k$ tends to $f^0(\infty; v) + \delta$ because we know that
\begin{eqnarray*}
\limsup_{k \to \infty} \frac{f(x_k + t_k v) - f(x_k)}{t_k} &\le& f^0(\infty; v).
\end{eqnarray*}
Furthermore, we have
 \begin{eqnarray*}
y_k + t_k r_k 
&\ge& y_k + \big( f(x_k + t_k v) - f(x_k) \big) \\
&\ge& f(x_k) + \big( f(x_k + t_k v) - f(x_k) \big) \ = \ f(x_k + t_k v).
\end{eqnarray*}
Therefore, $(x_k, y_k) + t_k (v_k, r_k) \in \mathrm{epi} f,$ where $v_k := v,$ and so $(v, f^0(\infty; v) + \delta) \in T_{\mathrm{epi} f}(\infty_I).$
\end{proof}

We denote by $\Omega_f \subset \mathbb{R}^n$ the set of points where $f$ is not differentiable. By Rademacher's theorem (see, for example, \cite{Rockafellar1998}), if $f$ is Lipschitz at infinity, then there exists a real number $R > 0$ such that the set $\Omega_f \setminus \B_R$ has measure zero in $\mathbb{R}^n.$ 

\begin{proposition} \label{Proposition53}
Let $f$ be Lipschitz at infinity. Then 
\begin{eqnarray*}
\partial f(\infty) &=& \mathrm{co} \{ \lim \nabla f(x) \ | \ x \to \infty \textrm{ and } x\not \in \Omega_f\}.
\end{eqnarray*}
\end{proposition}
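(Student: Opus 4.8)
The plan is to mimic the classical proof of Clarke's representation of the generalized gradient of a locally Lipschitz function (see \cite[Theorem~2.5.1]{Clarke1990}), transplanted to the setting at infinity, using Proposition~\ref{Proposition52} as the bridge between $\partial f(\infty)$ and the directional object $f^0(\infty;\cdot)$. Write $G := \mathrm{co}\{\lim \nabla f(x_k) \mid x_k \to \infty,\ x_k \notin \Omega_f\}$ for the right-hand side. First I would check that $G$ is well defined, nonempty, convex and compact: since $f$ is Lipschitz at infinity with constant $L$ outside some ball $\B_R$, any such limit of gradients has norm at most $L$, and the set of limit points is nonempty (take any sequence $x_k \to \infty$ avoiding the measure-zero set $\Omega_f \setminus \B_R$, which is possible by Rademacher's theorem, and extract a convergent subsequence of the bounded sequence $\nabla f(x_k)$); closedness of the limit-point set is a routine diagonal argument, and then $G$ is its convex hull in $\B_L$, hence compact convex.

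The heart of the argument is the two inclusions. For $G \subset \partial f(\infty)$: by Corollary~\ref{Corollary43} the multifunction $\partial f$ is closed at infinity once we know $\partial f(\infty)$ is nonempty and contains no line — but since $f$ is Lipschitz at infinity, $f^0(\infty;\cdot)$ is finite everywhere, so by Proposition~\ref{Proposition52} the cone $T_{\mathrm{epi} f}(\infty_I)$ is the epigraph of a finite sublinear function, its interior is nonempty, and $\partial f(\infty)$ is the (nonempty compact) base of the polar cone, containing no line. For $x \notin \Omega_f$ one has $\nabla f(x) \in \partial f(x)$; taking $x_k \to \infty$, $x_k \notin \Omega_f$ with $\nabla f(x_k) \to \xi$, Corollary~\ref{Corollary43} gives $\xi \in \partial f(\infty)$, and then convexity of $\partial f(\infty)$ yields $G \subset \partial f(\infty)$. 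For the reverse inclusion $\partial f(\infty) \subset G$: since both sets are compact convex, it suffices by the separation theorem to compare support functions, i.e.\ to show $\sup_{\xi \in \partial f(\infty)} \langle \xi, v\rangle \le \sup_{\xi \in G} \langle \xi, v\rangle$ for every $v$. By Corollary~\ref{Corollary47} the left-hand side equals $f^0(\infty;v) = f^{\uparrow}(\infty;v)$ (the two coincide in the Lipschitz case, which I would record as a preliminary remark from Propositions~\ref{Proposition45} and \ref{Proposition52}), so the task reduces to the key estimate
\begin{eqnarray*}
f^0(\infty; v) &\le& \limsup_{x \to \infty,\ x \notin \Omega_f} \langle \nabla f(x), v\rangle.
\end{eqnarray*}

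To prove this estimate I would argue as in Clarke's lemma. Fix $v$ and pick sequences $x_k \to \infty$, $t_k \searrow 0$ realizing $f^0(\infty;v)$ as the limit of the difference quotients $\big(f(x_k + t_k v) - f(x_k)\big)/t_k$. Along the segment $[x_k, x_k + t_k v]$, the function $s \mapsto f(x_k + s v)$ is (for $k$ large, so that the whole segment lies outside $\B_R$) Lipschitz, hence absolutely continuous, and its derivative is $\langle \nabla f(x_k + sv), v\rangle$ for almost every $s$; moreover by Fubini one can perturb $x_k$ within a tiny ball so that $x_k + sv \notin \Omega_f$ for a.e.\ $s \in [0, t_k]$, without changing the limit. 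Then
\begin{eqnarray*}
\frac{f(x_k + t_k v) - f(x_k)}{t_k} &=& \frac{1}{t_k}\int_0^{t_k} \langle \nabla f(x_k + s v), v\rangle\, ds \ \le\ \sup_{s \in [0, t_k]} \langle \nabla f(x_k + sv), v\rangle,
\end{eqnarray*}
and since $x_k + sv \to \infty$ and avoids $\Omega_f$ for the relevant $s$, each such supremum is at most $\sup\{\langle \nabla f(y), v\rangle : y \to \infty,\ y \notin \Omega_f\} + o(1)$; letting $k \to \infty$ gives the estimate, and the right-hand side is exactly $\sup_{\xi \in G}\langle \xi, v\rangle$ because the support function of a convex hull equals the support function of the generating set and a $\limsup$ over $y \to \infty$ of a linear functional along gradients is the same as the sup over the (compact) limit-point set. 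The main obstacle I anticipate is the careful handling of the ``avoid $\Omega_f$ along almost every segment'' step: one must use Rademacher plus a Fubini/measure-theoretic argument to guarantee that after an arbitrarily small translation of the base points $x_k$ the segments meet $\Omega_f$ in a null set, so that the fundamental theorem of calculus applies — exactly the technical core of Clarke's original proof, and everything else is bookkeeping with the cones and support functions developed earlier in the paper.
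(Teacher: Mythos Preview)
Your proposal is correct and follows essentially the same route as the paper's proof: both reduce to the support-function comparison $f^0(\infty;v) \le \limsup_{x\to\infty,\, x\notin\Omega_f}\langle \nabla f(x),v\rangle$ and establish it by the identical Fubini/segment-integration argument you describe. The only tactical difference is in the easy inclusion $G \subset \partial f(\infty)$: the paper verifies $\langle \xi,v\rangle \le f^0(\infty;v)$ directly by passing through the local Clarke derivative $f^0(x_k;v)$, whereas you appeal to Corollary~\ref{Corollary43}; your route is a touch slicker, though note that Corollary~\ref{Corollary43} assumes $f$ lower semi-continuous, which is not hypothesized in Proposition~\ref{Proposition53} (harmless in practice, since one may modify $f$ on a bounded set without affecting anything at infinity).
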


\begin{proof}
Let 
\begin{eqnarray*}
A &:=& \{ \lim \nabla f(x) \ | \ x \to \infty \textrm{ and } x\not \in \Omega_f\}.
\end{eqnarray*}
We first show that $\partial f(\infty)  \supset \mathrm{co} A.$ Since $\partial f(\infty) $ is a convex set, it suffices to prove $\partial f(\infty) \supset A.$
To see this, take any $\xi \in A.$ By definition, there exists a sequence $x_k \to \infty$ with $x_k \not \in \Omega_f$ such that $\nabla f(x_k) \to \xi.$
We will show that $\xi \in \partial f(\infty).$ In view of Proposition~\ref{Proposition52}, it suffices to show that $\langle \xi, v \rangle \le f^0(\infty; v)$ for all $v \in \mathbb{R}^n.$ To do this, take any $v \in \mathbb{R}^n.$ By \cite[Propositions~2.1.5~and~2.2.2]{Clarke1990}, we know that
\begin{eqnarray*}
\langle \nabla f(x_k), v \rangle & \le & f^0(x_k; v) \ := \ \limsup_{x' \to x_k, \ t \searrow 0} \frac{f(x' + t v) - f(x')}{t}.
\end{eqnarray*}
On the other hand, by definition, there exist $\overline{x}_k \in \mathbb{R}^n$ and $t_k \in \mathbb{R}$ with $\|x_k - \overline{x}_k\| \le \frac{1}{k}$ and $0 < t_k < \frac{1}{k}$ such that
\begin{eqnarray*}
f^0(x_k; v) &\le & \frac{f(\overline{x}_k + t_k v)  - f(\overline{x}_k)}{t_k} + \frac{1}{k}.
\end{eqnarray*}
Observe that $\overline{x}_k \to \infty$ and $t_k \to 0.$ Moreover, 
\begin{eqnarray*}
\langle \xi, v \rangle \ = \ \lim_{k \to \infty} \langle \nabla f(x_k), v \rangle 
& \le & \limsup_{k \to \infty} f^0(x_k; v)  \\
& \le & \limsup_{k \to \infty} \left[\frac{f(\overline{x}_k + t_k v)  - f(\overline{x}_k)}{t_k} + \frac{1}{k} \right ]\\
& \le & \limsup_{x \to \infty, \ t \searrow 0} \frac{f(x + t v)  - f(x)}{t} \ = \ f^0(\infty; v).
\end{eqnarray*}

We have proved that $\partial f(\infty)  \supset \mathrm{co} A.$ Since $\mathrm{co} A$ is a compact convex set in $\mathbb{R}^n,$ in view of Proposition~\ref{Proposition52}, in order to complete the proof it suffices to show that for all $v \in \mathbb{R}^n \setminus \{0\},$ we have
\begin{eqnarray*}
f^0(\infty; v) & \le & \limsup \{ \langle \nabla f(x), v \rangle \ | \ x \to \infty \textrm{ and } x\not \in \Omega_f\}.
\end{eqnarray*}
To see this, let $r$ be the right-hand side and take arbitrary $\epsilon > 0.$ By definition, there exists a constant $R > 0$ such that 
\begin{eqnarray*}
\langle \nabla f(x), v \rangle & \le & r + \epsilon \quad \textrm{ for all } \quad x \in \mathbb{R}^n \setminus (\B_R \cup \Omega_f).
\end{eqnarray*}
For each $x \in \mathbb{R}^n,$ we define $L_x := \{x + tv \ | \ 0 < t < \frac{R}{\|v\|}\}.$ Since $\Omega_f$ has measure zero in $\mathbb{R}^n,$ it follows from Fubini's theorem that for almost $x$ in $\mathbb{R}^n \setminus \B_R,$ the set $L_x \cap \Omega_f$ has measure zero in $L_x.$ Let $x$ be any point in $\mathbb{R}^n \setminus \B_{2R}$ having this property and let $t$ lie in $(0, \frac{R}{\|v\|}),$ and then
\begin{eqnarray*}
f(x + tv) - f(x) &  = & \int_0^t \langle \nabla f(x + sv) , v \rangle ds,
\end{eqnarray*}
because $\nabla f$ exists almost everywhere on $L_x.$ Since we have $\|x + sv \| \ge \|x\| - s\|v\| > R$ for $0 < s < t,$ it follows that
\begin{eqnarray*}
\langle \nabla f(x + sv) , v \rangle &\le& r + \epsilon,
\end{eqnarray*}
whence 
\begin{eqnarray*}
f(x + tv) - f(x) &\le & t (r + \epsilon).
\end{eqnarray*}
Since this inequality is true for all $x \in \mathbb{R}^n \setminus \B_{2R}$ except those in a set of measure zero and for all $t$ in $(0, \frac{R}{\|v\|})$ and since $f$ is continuous, it is in fact true for all such $x$ and $t.$ Therefore,
\begin{eqnarray*}
f^0(\infty; v) &\le& r + \epsilon.
\end{eqnarray*}
Since $\epsilon > 0$ is arbitrary, we get $f^0(\infty; v) \le r,$ which completes the proof.
\end{proof}

\begin{corollary}
If $f$ is Lipschitz at infinity, then $\partial (-f)(\infty) = -\partial f(\infty)$ and 
\begin{eqnarray*}
f^0(\infty; v) &  = & \limsup \{ \langle \nabla f(x), v \rangle \ | \ x \to \infty \textrm{ and } x\not \in \Omega_f\}.
\end{eqnarray*}\end{corollary}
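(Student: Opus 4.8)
The plan is to obtain both assertions directly from Proposition~\ref{Proposition53} and from the inequalities already established inside its proof; no new machinery is needed.

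\emph{The identity $\partial(-f)(\infty) = -\partial f(\infty)$.} First I would record three elementary facts: $-f$ is again Lipschitz at infinity with the same constant and radius, since $|(-f)(x)-(-f)(x')| = |f(x)-f(x')|$; the non-differentiability sets coincide, $\Omega_{-f} = \Omega_f$; and $\nabla(-f)(x) = -\nabla f(x)$ at every point where $f$ is differentiable. Writing $A := \{\lim \nabla f(x) \mid x \to \infty,\ x \notin \Omega_f\}$, Proposition~\ref{Proposition53} applied to $f$ gives $\partial f(\infty) = \mathrm{co}\,A$, while the same proposition applied to $-f$ gives $\partial(-f)(\infty) = \mathrm{co}\{\lim \nabla(-f)(x) \mid x\to\infty,\ x\notin\Omega_f\} = \mathrm{co}(-A) = -\,\mathrm{co}\,A = -\partial f(\infty)$, using that the convex hull commutes with the linear map $\xi\mapsto -\xi$.

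\emph{The formula for $f^0(\infty;\cdot)$.} Fix $v\in\mathbb{R}^n$ and put $r(v) := \limsup\{\langle \nabla f(x), v\rangle \mid x\to\infty,\ x\notin\Omega_f\}$. From the proof of Proposition~\ref{Proposition53} I already have $\langle \xi, v\rangle \le f^0(\infty; v)$ for every $\xi\in A$ (this is precisely the content of the inclusion $\partial f(\infty)\supset A$ there) and $f^0(\infty; v) \le r(v)$ (the final integration step, valid for $v\ne 0$; for $v=0$ both sides equal $0$). It remains only to show $r(v)\le \sup_{\xi\in A}\langle\xi,v\rangle$: I would choose a sequence $x_k\to\infty$ with $x_k\notin\Omega_f$ and $\langle\nabla f(x_k),v\rangle \to r(v)$; since $f$ is Lipschitz at infinity the vectors $\nabla f(x_k)$ are bounded for large $k$, so along a subsequence $\nabla f(x_k)\to\xi$ for some $\xi\in A$, whence $\langle\xi,v\rangle = r(v)$. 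Chaining the three inequalities yields $\sup_{\xi\in A}\langle\xi,v\rangle \le f^0(\infty;v)\le r(v)\le \sup_{\xi\in A}\langle\xi,v\rangle$, so $f^0(\infty;v) = r(v)$, as claimed.

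\emph{On the difficulty.} There is no genuine obstacle here: both statements are short consequences of Proposition~\ref{Proposition53}. The only step needing a moment of care is the compactness argument at the end — extracting a cluster value of $\nabla f(x_k)$ that realizes the $\limsup$ — which relies on the uniform bound on $\nabla f$ near infinity coming from Lipschitzness at infinity (and, implicitly, on $A\ne\emptyset$, which follows from Rademacher's theorem ensuring that differentiability points accumulate at infinity).
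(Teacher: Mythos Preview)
Your proposal is correct and is precisely the intended argument: the paper states this corollary without proof, leaving it as an immediate consequence of Proposition~\ref{Proposition53} (and the two inequalities established in its proof). Your write-up makes that implicit reasoning explicit, including the only nontrivial point---extracting a cluster value of $\nabla f(x_k)$ via the uniform gradient bound from Lipschitzness at infinity---so there is nothing to add.
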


\begin{example}{\rm 
The function $f \colon \mathbb{R} \to \mathbb{R}, x \mapsto -|x|,$ is 1-Lipschitz and so is Lipschitz at infinity. By Proposition~\ref{Proposition53}, we have
\begin{eqnarray*}
\partial (-f)(\infty)  &=& -\partial f(\infty) \ = \ \mathrm{co}\{-1, 1\}\ = \ [-1, 1]. 
\end{eqnarray*}
}\end{example}

A nonzero vector $v \in \mathbb{R}^n$ is {\em perpendicular} to a set $C \subset \mathbb{R}^n$ at a point $x$ in $\mathrm{cl} C$ (this is denoted $v \perp C$ at $x$) if $v = x' - x,$ where the point $x'$ has unique closest point $x$ in $\mathrm{cl} C.$ Equivalently, $v = x' - x,$ where there is a closed ball centered at $x'$ which meets $\mathrm{cl} C$ only at $x.$ (Note that $x' \not \in \mathrm{cl} C$ because $v \ne 0$.) With this notation, the subgradients at infinity of the distance function $d_C(\cdot)$ can be characterized as follows.

\begin{proposition}
Let $C \subset \mathbb{R}^n$ be a nonempty set. Then
\begin{eqnarray*}
\partial d_{C} (\infty) &=&
\begin{cases}
\mathbb{B} &\textrm{ if } C \textrm{ is bounded}, \\
\mathrm{co}\left( \{0\} \cup A \right) & \textrm{ otherwise,}
\end{cases}
\end{eqnarray*}
where
\begin{eqnarray*}
A &:=& \left\{ v = \lim \dfrac{v_k}{\|v_k\|} \mid v_k = x_k' - x_k \perp C \ {\rm at} \ x_k, \ x_k' \to \infty \right\}.
\end{eqnarray*}
\end{proposition}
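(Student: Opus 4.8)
The plan is to invoke Proposition~\ref{Proposition53}, since the distance function $d_C$ is $1$-Lipschitz (hence Lipschitz at infinity), which reduces the problem to identifying the set $\{\lim \nabla d_C(x) \mid x \to \infty, \ x \notin \Omega_{d_C}\}$ and then taking its convex hull. First I would recall the classical fact (see \cite{Clarke1990}) that at a point $x \notin C$ where $d_C$ is differentiable, the gradient $\nabla d_C(x)$ is a unit vector: writing $y(x)$ for the (then unique) nearest point of $C$ to $x$, one has $\nabla d_C(x) = \frac{x - y(x)}{\|x - y(x)\|}$, and moreover $x - y(x) \perp C$ at $y(x)$. At points $x \in \mathrm{int}\,C$ where $d_C$ vanishes identically in a neighborhood, $\nabla d_C(x) = 0$; points of $\partial C$ and non-smoothness points outside $C$ lie in $\Omega_{d_C}$ and contribute nothing. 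Thus the set of limiting gradients at infinity splits into the value $0$ (obtained along any sequence $x_k \to \infty$ inside $\mathrm{int}\,C$, which exists because $C$ is unbounded — or more carefully because we can always perturb to get points deep inside $C$ if $\mathrm{int}\,C \ne \emptyset$, and otherwise $0$ still arises as a limit from the perpendicular directions as shown below) together with limits of unit perpendicular directions.

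Next I would match this with the set $A$ in the statement. Given $v \in A$, by definition $v = \lim v_k/\|v_k\|$ with $v_k \perp C$ at $x_k$, $x_k \to \infty$, $v_k \to 0$. Set $z_k := x_k + v_k$; then $z_k$ has unique nearest point $x_k$ in $C$, so $d_C$ is differentiable at $z_k$ with $\nabla d_C(z_k) = v_k/\|v_k\|$, and $z_k \to \infty$ since $x_k \to \infty$ and $v_k \to 0$. Hence $v = \lim \nabla d_C(z_k)$ lies in the limiting-gradient set. Conversely, if $\xi = \lim \nabla d_C(x_k)$ with $x_k \to \infty$, $x_k \notin \Omega_{d_C}$ and $\xi \ne 0$, then necessarily each $x_k \notin C$ (else the gradient would be $0$ for large $k$... more precisely, after passing to a subsequence we may assume $x_k \notin C$); putting $v_k := x_k - y(x_k)$ we get $v_k \perp C$ at $y(x_k)$ with $y(x_k) \to \infty$ and $\nabla d_C(x_k) = v_k/\|v_k\|$, so $\xi \in A$ provided we can arrange $v_k \to 0$. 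This last point — that limiting \emph{nonzero} gradients at infinity come only from perpendiculars of \emph{vanishing} length — is the crux: if $\|v_k\|$ stays bounded away from $0$ along a subsequence, then since $d_C(x_k) = \|v_k\|$ is bounded below while $x_k \to \infty$, I would argue that along that subsequence we can instead slide the base point $x_k$ toward $C$ along the segment $[y(x_k), x_k]$, producing new points $\widetilde x_k$ with the same gradient direction but with $\widetilde x_k - y(\widetilde x_k) \to 0$; differentiability and the perpendicularity are preserved on the open segment since the nearest point stays $y(x_k)$. The remaining gradients are either $0$ (contributing $\{0\}$) or arise as such limits, so the limiting-gradient set equals $\{0\} \cup A$, and Proposition~\ref{Proposition53} gives $\partial d_C(\infty) = \mathrm{co}(\{0\} \cup A)$.

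The main obstacle I anticipate is the bookkeeping around \emph{which} points $x \to \infty$ actually contribute nonzero limiting gradients and the rigorous justification that one may reduce to perpendiculars of vanishing length; this requires the elementary but slightly delicate geometric observation that on the open segment from the nearest point $y(x)$ to $x$ the function $d_C$ is smooth with constant-direction gradient $\frac{x-y(x)}{\|x-y(x)\|}$, together with care that when $C$ has empty interior the value $0 \in \partial d_C(\infty)$ must be recovered either as a limit of such directions or is forced by convexity once $A \ne \emptyset$ (and if $A = \emptyset$ one checks directly that $\partial d_C(\infty) = \{0\}$, consistent with $\mathrm{co}(\{0\}) = \{0\}$). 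The one-sided inequality needed to apply Proposition~\ref{Proposition53} — namely $d_C^0(\infty; v) \le \limsup\{\langle \nabla d_C(x), v\rangle\}$ — is already packaged inside that proposition, so no separate Fubini argument is needed here; I would simply cite it.
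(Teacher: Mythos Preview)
Your overall strategy coincides with the paper's: both rely on Proposition~\ref{Proposition53} for the inclusion $\partial d_C(\infty)\subset\mathrm{co}(\{0\}\cup A)$, and then verify $\{0\}\cup A\subset\partial d_C(\infty)$ separately. There is, however, a genuine gap in your argument for the inclusion $\subset$. You assert that if $x_k\to\infty$ with $\nabla d_C(x_k)\to\xi\neq 0$ then, writing $y(x_k)$ for the nearest point, one has ``$y(x_k)\to\infty$''. This is not justified and can fail: take $C=[0,\infty)\times\{0\}\subset\mathbb{R}^2$ and $x_k=(-k,1)$; then $y(x_k)=(0,0)$ for every $k$ while $\nabla d_C(x_k)\to(-1,0)$. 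Your sliding trick repairs the auxiliary requirement $v_k\to 0$, but it does nothing for the location of the foot of the perpendicular. (The paper's proof handles the $v_k\to 0$ issue by the cleaner observation that $v\perp C$ at $x$ implies $tv\perp C$ at $x$ for all $t\in(0,1]$, giving $A=A'$, but it also passes silently over why the feet go to infinity, so consulting it will not resolve this point for you.)

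For the reverse inclusion the paper argues differently from you, and the differences are worth noting. Instead of exhibiting $0$ as a limit of gradients---which, as you observe, is delicate when $\mathrm{int}\,C=\emptyset$ and in fact need not hold, since $\nabla d_C$ is a unit vector wherever it exists off $C$---the paper obtains $0\in\partial d_C(\infty)$ directly from Proposition~\ref{Proposition44}: pick $c_k\in C$ with $c_k\to\infty$ and use $d_C(c_k)=0=\inf d_C$. For $A\subset\partial d_C(\infty)$, rather than invoking differentiability of $d_C$ at $z_k=x_k+v_k$ (which is in fact true in $\mathbb{R}^n$ once the projection is single-valued, but deserves a citation), the paper applies Lebourg's mean value theorem on the segment $[x_k,x_k+v_k]$ to obtain $u_k\in(x_k,x_k+v_k)$ and $\xi_k\in\partial d_C(u_k)$ with $\langle\xi_k,v_k\rangle=\|v_k\|$; combined with $\|\xi_k\|\le 1$ this forces $\xi_k=v_k/\|v_k\|$, and then Corollary~\ref{Corollary43} (closedness of $\partial d_C$ at infinity) sends the limit into $\partial d_C(\infty)$. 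This route bypasses Proposition~\ref{Proposition53} entirely for the $\supset$ direction and avoids the bookkeeping you flagged around $\Omega_{d_C}$ and $\mathrm{int}\,C$.
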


\begin{proof}
In view of \cite[Proposition~2.5.4]{Clarke1990}, for each $x \in \mathbb{R}^n,$ if $\nabla d_C (x)$ exists, then it is either zero or the unit vector $\dfrac{x-c}{\|x-c\|}$, where $c$ is the unique closest point in $\mathrm{cl} C$ to $x.$ It follows from Proposition~\ref{Proposition53} that 
\begin{eqnarray*}
\partial d_C(\infty) & \subset & \mathrm{co} \left( \{0\} \cup A \right) \ \subset \ \mathbb{B}.
\end{eqnarray*}
We now prove the desired formula. There are two cases to be considered.

\subsubsection*{Case 1: The set $C$ is bounded.}
Since $\partial d_C(\infty)$ is a convex set and contained in the ball $\mathbb{B},$ it suffices to show that $\mathbb{S}^{n - 1} \subset \partial d_C(\infty).$ To this end, take any $v \in \mathbb{S}^{n - 1}.$ By the classical Weierstrass theorem, there exists a point $x \in \mathrm{cl} C$ such that 
\begin{eqnarray*}
\langle v, {x} \rangle &\ge& \langle v, c \rangle \quad \textrm{ for all } \quad c \in \mathrm{cl} C.
\end{eqnarray*}
It follows that
\begin{eqnarray*}
d_C({x} + \lambda v) &=& \|{x} + \lambda v - {x}\| \ = \ \lambda \quad \textrm{ for all } \quad \lambda \ge 0.
\end{eqnarray*}
Let $0 < \lambda_1 < \lambda_2.$ Applying the mean value theorem (see \cite[Theorem~1.7]{Lebourg1979} or \cite[Theorem~2.3.7]{Clarke1990}) to the function $d_C(\cdot)$, we find $u \in (x + \lambda_1 v, x + \lambda_2 v)$ and $\xi \in \partial d_C(u) $ such that
\begin{eqnarray*}
d_C(x + \lambda_2 v) - d_C(x + \lambda_1 v) &=& \langle \xi, (\lambda_2 - \lambda_1) v \rangle.
\end{eqnarray*}
Therefore,
\begin{eqnarray*}
1 &=& \langle \xi, v \rangle.
\end{eqnarray*}
Since $d_C(\cdot)$ is 1-Lipschitz, $\| \xi\| \le 1.$ Consequently, $v = \xi \in \partial d_C(u).$ Letting $\lambda_1 \to \infty,$ we see that $u \to \infty,$ and so $v \in \partial d_C(\infty)$ due to Corollary~\ref{Corollary43}. Since $v$ was arbitrary in $\mathbb{S}^{n - 1},$ we conclude that $\mathbb{S}^{n - 1} \subset \partial d_{\Omega} (\infty),$ as required.

\subsubsection*{Case 2: The set $C$ is unbounded.} In this case there exists a sequence $c_k \in C$ tending to infinity. Then $d_C(c_k) = 0$ for all $k$. By Proposition~\ref{Proposition44}, $0 \in \partial d_C(\infty).$ Since $\partial d_C(\infty) \subset \mathrm{co} \left( \{0\} \cup A \right),$ as we have seen above, it remains only to verify that $v \in \partial d_C(\infty)$ for all $v \in A.$ To see this, let $v_k := x_k' - x_k \ne 0,$ where $x_k'$ has the closest point $x_k$ in $\mathrm{cl} C$ and $x_k' \to \infty.$ Choose a sequence $t_k \in (0, 1)$ such that $t_k \|x_k' - x_k\| \to 0$ and let $x_k'' := t_k x_k + (1 - t_k) x_k'.$ Then $d_C(x_k'') = \|x_k'' - x_k\|$ and $\|x_k' - x_k''\|  = t_k \|x_k' - x_k\| \to 0.$ Applying the mean value theorem (see \cite[Theorem~1.7]{Lebourg1979} or \cite[Theorem~2.3.7]{Clarke1990}) to the function $d_C(\cdot)$, we find $u_k \in (x_k', x_k'')$ and $\xi_k \in \partial d_C(u_k) $ such that
\begin{eqnarray*}
d_C(x_k') - d_C(x_k'')  &=& \langle \xi_k, x_k' - x_k'' \rangle.
\end{eqnarray*}
As $d_C(x_k') - d_C(x_k'')  = \|x_k' - x_k''\|,$ we get
\begin{eqnarray*}
1 &=& \left \langle \xi_k, \dfrac{x_k' - x_k''}{\|x_k' - x_k''\|}  \right\rangle \ = \ \left \langle  \xi_k, \dfrac{v_k}{\|v_k\|} \right \rangle.
\end{eqnarray*}
Since $d_C(\cdot)$ is 1-Lipschitz, $\| \xi_k\| \le 1.$ Therefore, $\xi_k = \dfrac{v_k}{\|v_k\|}  \in \partial d_C(u_k).$ Noting that $u_k \to \infty$ (since $x_k' \to \infty$ and $\|x_k' - x_k'' \|\to 0$), we deduce from Corollary~\ref{Corollary43} that
\begin{eqnarray*}
v & = & \lim \dfrac{v_k}{\|v_k\|} \ \in \ \partial d_C(\infty),
\end{eqnarray*}
which completes the proof.
\end{proof}

\begin{remark}{\rm
The proof of the above proposition also shows that if $C$ is unbounded, then a vector $v \in \mathbb{R}^n$ belongs to $\partial d_C(\infty)$ if and only if there exist
sequences $x_k \in \mathrm{cl} C$ and $x_k' \not \in \mathrm{cl} C$ with $d_C(x_k') = \|x_k' - x_k \|$ such that $x_k' \to \infty$ and
$\dfrac{x_k' - x_k}{\|x_k' - x_k\|}  \to v.$
}\end{remark}

The third of our main theorems provides conditions guaranteeing that a lower semi-continuous function is Lipschitz at infinity.

\begin{theorem}\label{Theorem57}
Let $f$ be lower semi-continuous. The following statements are equivalent:
\begin{enumerate}[{\rm (i)}]
\item $f$ is Lipschitz at infinity.
\item $\partial f(\infty)$ is nonempty compact.
\item $f^{\uparrow}(\infty; v)$ is finite for all $v \in \mathbb{R}^n.$
\item $f^{\dagger}(\infty; v)$ is finite for all $v \in \mathbb{R}^n.$
\item $f^0(\infty; v)$ is finite for all $v \in \mathbb{R}^n.$
\item $\partial^{\infty} f(\infty)=\{0\}$, where $\partial^{\infty} f(\infty) := \{\xi \in \mathbb{R}^n \ | \ (\xi, 0) \in N_{\mathrm{epi} f}(\infty_I)\}.$
\end{enumerate}
\end{theorem}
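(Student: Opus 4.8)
The plan is to establish the equivalence of the six statements by running the single cycle
$(\mathrm{i})\Rightarrow(\mathrm{ii})\Rightarrow(\mathrm{iii})\Rightarrow(\mathrm{iv})\Rightarrow(\mathrm{v})\Rightarrow(\mathrm{vi})\Rightarrow(\mathrm{i})$,
relying throughout on the dictionary already built in Section~\ref{Section4}: $T_{\mathrm{epi} f}(\infty_I)=\mathrm{epi}\,f^{\uparrow}(\infty;\cdot)$ is a closed convex cone (Proposition~\ref{Proposition45}, Corollary~\ref{Corollary35}); $f^{\uparrow}(\infty;\cdot)$ is convex, positively homogeneous and lower semicontinuous (Corollary~\ref{Corollary46}) and, when $\partial f(\infty)\ne\emptyset$, is the support function of $\partial f(\infty)$ (Corollary~\ref{Corollary47}); Corollary~\ref{Corollary411} and Proposition~\ref{Proposition412}; and the elementary inequalities $f^{\uparrow}(\infty;v)\le f^0(\infty;v)\le f^{\dagger}(\infty;v)$ and $f^0(\infty;v)+f^0(\infty;-v)\ge 0$ read off the definitions. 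I would also record once that $(\mathrm{vi})$ is equivalent to $D^{\uparrow}_{f}(\infty)=\mathbb{R}^n$: indeed $\partial^{\infty}f(\infty)$ is the polar cone of the convex cone $D^{\uparrow}_{f}(\infty)=\pi(T_{\mathrm{epi} f}(\infty_I))$, so $\partial^{\infty}f(\infty)=\{0\}$ forces $D^{\uparrow}_{f}(\infty)$ dense, hence all of $\mathbb{R}^n$. The soft implications are then quick: $(\mathrm{i})\Rightarrow(\mathrm{ii})$ because a nonempty compact convex set has finite support function; $(\mathrm{ii})\Rightarrow(\mathrm{iii})$ because $D^{\uparrow}_{f}(\infty)=\mathbb{R}^n$ puts every $v$ in $\mathrm{int}\,D^{\uparrow}_{f}(\infty)$, so Corollary~\ref{Corollary411} gives $f^{\dagger}(\infty;v)<+\infty$, while $f^{\dagger}(\infty;v)\ge f^{\uparrow}(\infty;v)>-\infty$; $(\mathrm{iv})\Rightarrow(\mathrm{v})$ directly from the definition of $f^0(\infty;\cdot)$; and $(\mathrm{v})\Rightarrow(\mathrm{vi})$ because $f^{\uparrow}(\infty;v)\le f^0(\infty;v)<+\infty$ gives $D^{\uparrow}_{f}(\infty)=\mathbb{R}^n$. (The direct implication $(\mathrm{i})\Rightarrow(\mathrm{vi})$ is equally immediate: for $\xi\in\partial f(\infty)$, $\eta\in\partial^{\infty}f(\infty)$ the cone $N_{\mathrm{epi} f}(\infty_I)$ contains $(\xi+t\eta,-1)$ for all $t\ge 0$, so $\xi+t\eta\in\partial f(\infty)$ and compactness forces $\eta=0$.)

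The first substantive step is $(\mathrm{iii})\Rightarrow(\mathrm{iv})$. I would first upgrade the pointwise directional bounds to a uniform one: fix $\delta>0$; for each unit $v$, finiteness of $f^{\dagger}(\infty;v)$ yields $R_v,\epsilon_v>0$ such that $f(x+tw)-f(x)\le(f^{\dagger}(\infty;v)+\delta)t$ whenever $\|x\|>R_v$, $\|w-v\|<\epsilon_v$, $0<t<\epsilon_v$; covering the compact unit sphere by finitely many of these $\epsilon_v$-neighbourhoods produces constants $L,R,\lambda>0$ with $f(x+tu)-f(x)\le Lt$ for all unit $u$, all $\|x\|>R$, all $t\in(0,\lambda)$, and applying this to both $u$ and $-u$ gives $|f(p+tu)-f(p)|\le Lt$ once $\|p\|>R+\lambda$ and $t\in(0,\lambda)$. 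To pass from these infinitesimal bounds to a Lipschitz estimate at infinity one must cope with the nonconvexity of $\mathbb{R}^n\setminus\B_{R+\lambda}$: for $x,x'$ with $\|x\|,\|x'\|>2(R+\lambda)$, either the segment $[x,x']$ avoids $\B_{R+\lambda}$, or it meets $\B_{R+\lambda}$, in which case $\|x\|+\|x'\|\le\|x-x'\|+2(R+\lambda)$ and hence $\|x\|,\|x'\|\le\|x-x'\|$. In the first case join $x$ to $x'$ along the segment; in the second, along a radial segment out to the sphere of radius $\|x'\|$ followed by a great-circle arc on that sphere. Either way one obtains a path inside $\mathbb{R}^n\setminus\B_{R+\lambda}$ of length $\le(1+\pi)\|x-x'\|$; subdividing it into steps shorter than $\lambda$, summing $|f(p+tu)-f(p)|\le Lt$ over the steps, and using continuity of $f$, we get $|f(x)-f(x')|\le L(1+\pi)\|x-x'\|$, which is $(\mathrm{iv})$.

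The second substantive step is $(\mathrm{vi})\Rightarrow(\mathrm{i})$, which I would split into boundedness and nonemptiness of $\partial f(\infty)$. Boundedness: if $\xi_k\in\partial f(\infty)$ with $\|\xi_k\|\to\infty$, then $\|\xi_k\|^{-1}(\xi_k,-1)\in N_{\mathrm{epi} f}(\infty_I)$, and a subsequence converges to some $(\eta,0)$ with $\|\eta\|=1$; closedness of the cone $N_{\mathrm{epi} f}(\infty_I)$ gives $\eta\in\partial^{\infty}f(\infty)\setminus\{0\}$, contradicting $(\mathrm{vi})$, so $\partial f(\infty)$ — closed, convex and bounded — is compact. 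Nonemptiness: if $\partial f(\infty)=\emptyset$, then $f^{\uparrow}(\infty;0)=-\infty$ by Corollary~\ref{Corollary47}, so the vertical line $\{0\}\times\mathbb{R}$ lies in $T_{\mathrm{epi} f}(\infty_I)$; since $(\mathrm{vi})$ forces $\pi(T_{\mathrm{epi} f}(\infty_I))=\mathbb{R}^n$ and $T_{\mathrm{epi} f}(\infty_I)$ is a convex cone, containing that vertical line it must equal $\mathbb{R}^n\times\mathbb{R}$. Then $(0,-1)\in\mathrm{int}\,T_{\mathrm{epi} f}(\infty_I)$, and since $\mathrm{epi} f$ is closed, Theorem~\ref{Theorem36} gives $R,\lambda>0$ with $(x,y)+\lambda(0,-1)\in\mathrm{epi} f$ for every $(x,y)\in\mathrm{epi} f$ with $\|x\|>R$; taking $y=f(x)$ — legitimate precisely because $f$ is real valued — yields $f(x)-\lambda\ge f(x)$, which is absurd. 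Hence $\partial f(\infty)\ne\emptyset$, closing the cycle. I expect the genuine difficulty to be concentrated in $(\mathrm{iii})\Rightarrow(\mathrm{iv})$, namely the ``short far-out path'' construction that absorbs the nonconvexity of the complement of a ball; a second, milder subtlety is the use of the real-valuedness of $f$ in $(\mathrm{vi})\Rightarrow(\mathrm{i})$ to exclude $T_{\mathrm{epi} f}(\infty_I)=\mathbb{R}^n\times\mathbb{R}$, which is exactly why Section~\ref{Section5} works with finite-valued functions.
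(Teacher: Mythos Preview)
Your cycle is correct, and the soft implications match the paper's. The two places where you genuinely diverge from the paper are $(\mathrm{iii})\Rightarrow(\mathrm{iv})$ and the nonemptiness half of $(\mathrm{vi})\Rightarrow(\mathrm{i})$.

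For $(\mathrm{iii})\Rightarrow(\mathrm{iv})$, the paper also first obtains the local estimate $|f(x)-f(x')|\le L\|x-x'\|$ for $\|x\|,\|x'\|>R$ and $\|x-x'\|<\delta$ (by a contradiction/subsequence argument rather than your sphere-compactness cover, but to the same effect). It then globalizes differently: it multiplies $f$ by a smooth cutoff $\varphi$ that is $1$ outside $\B_{3R}$ and $0$ inside $\B_{2R}$, observes that $\overline{f}:=f\varphi$ is locally Lipschitz on all of $\mathbb{R}^n$ with a uniform constant, and applies Clarke's mean value theorem to $\overline{f}$ on the straight segment $[x,x']$. Your route avoids the cutoff and the external mean-value theorem by building, when $[x,x']$ dips into $\B_{R+\lambda}$, a detour path of length $\le(1+\pi)\|x-x'\|$ that stays in the far region, then telescoping the local estimate along a fine subdivision. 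Both are sound; yours is more self-contained, theirs gives the sharper constant $L$ rather than $(1+\pi)L$ and reuses machinery already cited elsewhere in the paper.

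For nonemptiness in $(\mathrm{vi})\Rightarrow(\mathrm{i})$, the paper argues that $N_{\mathrm{epi} f}(\infty_I)$ always contains a nonzero vector: either $T_{\mathrm{epi} f}(\infty_I)$ has empty interior and one separates, or one takes unit normals $z_k\in N_{\mathrm{epi} f}(x_k,f(x_k))$ along $x_k\to\infty$ and passes to the limit via Corollary~\ref{Corollary37}. Since any nonzero element of $N_{\mathrm{epi} f}(\infty_I)$ lies in $\{(\xi,-1):\xi\in\partial f(\infty)\}$ up to scaling or in $\partial^{\infty}f(\infty)\setminus\{0\}$, hypothesis $(\mathrm{vi})$ forces $\partial f(\infty)\ne\emptyset$. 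Your argument is instead by contradiction through Theorem~\ref{Theorem36}: emptiness plus $(\mathrm{vi})$ forces $T_{\mathrm{epi} f}(\infty_I)=\mathbb{R}^{n+1}$, whence $(0,-1)$ is interior and Theorem~\ref{Theorem36} yields $f(x)-\lambda\ge f(x)$ for large $\|x\|$. This is a clean alternative that leans on the real-valuedness of $f$ exactly where the paper leans on the boundary-point normal cone being nontrivial; your observation that $\partial^{\infty}f(\infty)$ is the polar of the convex cone $D^{\uparrow}_f(\infty)$ (hence $(\mathrm{vi})\Leftrightarrow D^{\uparrow}_f(\infty)=\mathbb{R}^n$) is a useful reformulation the paper does not isolate explicitly. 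The boundedness half of $(\mathrm{vi})\Rightarrow(\mathrm{i})$ is identical in both proofs.
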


\begin{proof} 
We will prove the implications (i) $\Rightarrow$ (v) $\Rightarrow$ (vi) $\Rightarrow$ (ii) $\Rightarrow$ (iii) $\Rightarrow$ (iv) $\Rightarrow$ (i). 

(i) $\Rightarrow$ (v). This follows directly from the definition of $f^0(\infty; \cdot)$. 

(v) $\Rightarrow$ (vi). Take any $v \in \mathbb{R}^n.$ We have $f^{\uparrow}(\infty; v) \leq f^0(\infty; v) < +\infty,$ and so $(v, f^0(\infty; v)) \in T_{\mathrm{epi} f} (\infty_I)$  in view of Proposition~\ref{Proposition45}. Consequently, $\pi(T_{\mathrm{epi} f}(\infty_I)) = \mathbb{R}^n.$ 

We now let $\xi \in \partial^{\infty} f(\infty),$ that is $(\xi, 0) \in N_{\mathrm{epi} f}(\infty_I).$ By definition, then
\begin{eqnarray*}
\langle \xi, v \rangle &\le& 0 \quad \textrm{ for all } \quad v \in \pi(T_{\mathrm{epi} f}(\infty_I)) = \mathbb{R}^n,
\end{eqnarray*}
which yields $\xi = 0.$ Therefore, $\partial^{\infty} f(\infty) = \{0\}.$

(vi) $\Rightarrow$ (ii). We first claim that {\em $N_{{\rm epi}f}(\infty_{I})$ contains a non-zero vector.}

Indeed, if the interior of $T_{{\rm epi}f}(\infty_{I})$ is empty,  then there exists a vector $w \in \mathbb{R}^{n + 1}$ which does not lie in  $T_{{\rm epi}f}(\infty_I).$ Applying the separating hyperplane theorem (see, for example, \cite{Rockafellar1970}) for $w$ and the convex cone $T_{{\rm epi}f}(\infty_I),$ we get a non-zero vector $z \in \mathbb{R}^{n+1}$ such that
\begin{eqnarray*}
\langle z, w' \rangle &\leq& 0 \ \leq \ \langle z, w \rangle \quad \textrm{for all } \quad w' \in T_{\mathrm{epi}f}(\infty_{I}).
\end{eqnarray*}
This implies $z \in N_{{\rm epi}f}(\infty_{I}).$

We now assume that the interior of $T_{{\rm epi}f}(\infty_{I})$ is nonempty. Let $x_k$ be a sequence tending to infinity. Since $(x_k, f(x_k))$ belongs to the boundary of $\mathrm{epi} f,$ it follows from \cite[Corollary~2, p.~67]{Clarke1990} that $N_{{\rm epi}f}((x_k, f(x_k))$ contains a unit vector, say, $z_k.$ Passing to a subsequence if necessary, we may assume that the sequence $z_k$ converges to a unit vector $z.$ Note that $\mathrm{epi} f$ is closed because $f$ is lower semi-continuous. Hence, by Corollary~\ref{Corollary37}, $z \in N_{{\rm epi}f}(\infty_{I}).$

We have proved that $N_{{\rm epi}f}(\infty_{I})$ contains a non-zero vector. Therefore,
$$\partial f (\infty) \cup (\partial^{\infty}f(\infty)\setminus\{0\}) \neq \emptyset.$$
By assumption, hence $\partial f(\infty)$ is non-empty. 

Finally, we show the boundedness of $\partial f(\infty).$ By contradiction, suppose that there is a sequence $\xi_k \in \partial f(\infty)$ tending to infinity. By definition, $(\xi_k, -1)\in N_{{\rm epi}f}(\infty_{I})$, and hence
$$\dfrac{1}{\|\xi_k\|}(\xi_k,-1)\in N_{{\rm epi}f}(\infty_{I}).$$
Passing to a subsequence if necessary, we may assume that $\dfrac{\xi_k}{\|\xi_k\|} \to \xi.$ Then $\xi \ne 0$ and $(\xi, 0) \in N_{{\rm epi}f}(\infty_{I}),$ which contradicts the assumption that $\partial^{\infty} f(\infty)=\{0\}.$

(ii) $\Rightarrow$ (iii). This follows directly from Corollary~\ref{Corollary47}. 

(iii) $\Rightarrow$ (iv). The function $f^{\uparrow}(\infty; \cdot)$ is convex and finite on $\mathbb{R}^n,$ so it must be continuous (see \cite[Corollary~10.1.1]{Rockafellar1970}). Hence, every point $(v, r) \in \mathbb{R}^n \times \mathbb{R}$ with $f^{\uparrow}(\infty; v) < r$ belongs to $\mathrm{epi} f^{\uparrow}(\infty; \cdot)$ and hence belongs also to $T_{\mathrm{epi} f}(\infty_I)$ in view of Proposition~\ref{Proposition45}. This, together with Propositions~\ref{Proposition49} and \ref{Proposition412}, gives $f^{\dagger}(\infty; v) = f^{\uparrow}(\infty; v)$ for all $v \in \mathbb{R}^n.$ 

(iv) $\Rightarrow$ (i). We first show that there are constants $L > 0, $ $R > 0,$ and $\delta >0$ such that for all $x, x' \in \mathbb{R}^n \setminus \B_{R}$ with $\|x - x'\| < \delta,$
\begin{eqnarray*}
|f(x) - f(x')| &\le& L\|x - x'\|.
\end{eqnarray*}
Indeed, if this is not true, then we can find sequences $x_k, x_k'$ tending to infinity such that $t_k := \|x_k - x_k'\|$ tends to zero and that
\begin{eqnarray*}
[f(x_k) - f(x_k')] &=& |f(x_k) - f(x_k')|  \ > \ k \|x_k - x_k'\|.
\end{eqnarray*}
Passing to subsequences if necessary, we may assume that $v_k := \frac{x_k - x_k'}{\|x_k - x_k'\|}$ converges to a vector $v \in \mathbb{R}^n.$ Note that $t_k > 0$ and
\begin{eqnarray*}
\frac{[f(x_k' + t_k v_k) - f(x_k')]}{t_k} &=& \frac{[f(x_k) - f(x_k')]}{t_k}  \ > \ k,
\end{eqnarray*}
which yields $f^{\dagger}(\infty; v) = +\infty,$ a contradiction.

There is a $C^{\infty}$-function $\varphi \colon \mathbb{R}^n \to [0, 1]$ such that 
$$\varphi(x) = \begin{cases}
1 & \textrm{ if } \ \|x\| \ge 3R, \\
0 & \textrm{ if } \ \|x\| \le 2R.
\end{cases}$$
Define the function $\overline{f} \colon \mathbb{R}^n \to \mathbb{R}$ by $\overline{f}(x) := f(x) \varphi(x).$ Then $\overline{f}  \equiv f$ on 
$\mathbb{R}^n\setminus \mathbb{B}_{3R}.$ Furthermore, it is not hard to see that $\overline{f}$ is locally Lipschitz and so it is globally Lipschitz on the compact set $\mathbb{B}_{3R}.$ Increasing $L$ if necessary, we may assume that $\overline{f}$ is locally Lipschitz with constant $L.$
By the property of the Clarke subdifferential, we have $\|\xi\| \leq L$ for all $\xi \in \partial \overline{f}(x)$ and $x \in \mathbb{R}^n.$
We now show that $f$ is Lipschitz with constant $L$ on $\mathbb{R}^n\setminus \mathbb{B}_{3R}.$ 
To see this, take any $x, x' \in \mathbb{R}^n \setminus \B_{3R}.$ By the mean value theorem (see \cite[Theorem~1.7]{Lebourg1979} or \cite[Theorem~2.3.7]{Clarke1990}), we find a point $u \in (x, x')$ and a vector $\xi \in \partial \overline{f}(u)$ such that $\overline{f}(x) - \overline{f}(x') = \langle \xi, x - x'\rangle.$ Hence
\begin{eqnarray*}
|f(x) - f(x')| &=& |\overline{f}(x) - \overline{f}(x')| \ = \ |\langle \xi, x - x' \rangle| \ \le \ L\|x - x'\|,
\end{eqnarray*}
which yields (i).
\end{proof}

The following examples show that smooth functions are not necessarily Lipschitz at infinity and that the set of subgradients of $f$ at infinity may not be a singleton set when $f$ is smooth and Lipschitz at infinity.

\begin{example}\label{Example58} {\rm 
(i) The function $f(x) := e^x$ is of class $C^\infty$ on $\mathbb{R}.$ We have $\partial f(\infty) = [0, +\infty),$ which is unbounded. By Theorem~\ref{Theorem57}, $f$ is not Lipschitz at infinity. (We can also check directly that there is no constant $L > 0$ so that the inequality
$$|e^{k}-e^{k - 1}|\le L|k - (k - 1)|$$
holds for all sufficient large $k.)$

\noindent
(ii) Let $n := 1$ and consider the continuously differentiable function
$$f(x) := 
\begin{cases}
x &\textrm{ if } x \ge 1, \\
\frac{1}{2} x^2+\frac{1}{2} &\textrm{ if } -1 \le x \le 1,\\
-x &\textrm{ if } x \le -1.
\end{cases}$$
Clearly, $f$ is Lipschitz at infinity. In view of Proposition~\ref{Proposition53}, $\partial f(\infty) = [-1, 1],$ which is not a singleton set.
}\end{example}

\subsection*{Acknowledgments}
We would like to thank the anonymous referees for their careful reading with constructive comments and suggestions on the paper. A part of this work was completed during a research stay of the first author at Vietnam Institute for Advanced Study in Mathematics (VIASM); he is warmly grateful to this institute for its hospitality and support.


\end{document}